\newcommand{\ii}{\mathbbm{i}}
\newcommand{\q}{\mathbf{q}}
\newcommand{\C}{\mathbb{C}}
\newcommand{\R}{\mathbb{R}}
\newcommand{\N}{\mathbb{N}}
\newcommand{\Sp}{\mathbb{S}}
\newcommand{\T}{\mathcal{T}}
\newcommand{\norm}[1]{\left\vert #1 \right \vert}
\renewcommand{\overline}{\bar}
\renewcommand{\vartheta}{\theta}
\renewcommand{\footnote}[1]{}
\newcommand{\partl}[1]{\frac{\partial}{\partial #1}}
\renewcommand{\ker}{\text{ker }}
\renewcommand{\Re}{\text{Re}}
\renewcommand{\Im}{\text{Im}}
\def\[#1\]{\begin{align}#1\end{align}}
\def\(#1\){\begin{align*}#1\end{align*}}
\newtheorem{thm}{Theorem}
\numberwithin{thm}{section}
\newtheorem{prop}[thm]{Proposition}
\newtheorem{lemma}[thm]{Lemma}
\newtheorem{cor}[thm]{Corollary}
\newtheorem{question}[thm]{Question}
\theoremstyle{definition}
\newtheorem{defi}[thm]{Definition}
\theoremstyle{definition}
\newtheorem{example}[thm]{Example}
\newtheorem{remark}[thm]{Remark}
\numberwithin{equation}{section}
\numberwithin{figure}{section}
\newcommand{\st}{\;:\;}
\begin{document}

\title[QR Mappings on sub-Riemannian Manifolds]{Quasiregular mappings \\on sub-Riemannian manifolds}
\author[K. F\"assler]{Katrin F\"assler}
\author[A. Lukyanenko]{Anton Lukyanenko}
\author[K. Peltonen]{Kirsi Peltonen}

\email{katrin.fassler@helsinki.fi}

\email{anton@lukyanenko.net}

\email{kirsi.peltonen@aalto.fi}

\address{Department of Mathematics and Statistics, University of Helsinki, P.O.B. 68, FI-00014 Helsinki, Finland}
\address{Department of Mathematics, University of Illinois at Urbana-Champaign, 1409 West
Green Street, Urbana, IL 61801, USA}
\address{Department of Mathematics and Systems Analysis, Aalto University, P.O. Box 11100, 00076, Aalto, Finland}

\subjclass[2010]{30L10}

\thanks{K.F. was supported by the Academy of Finland, project number 252293. A.L. was supported by National Science Foundation grants DMS 0838434 and DMS 0901620 and Department of Mathematics and Systems Analysis of Aalto University.
}

\begin{abstract}
We study mappings on sub-Riemannian manifolds which are quasi{-}regular with respect to the Carnot-Carath\'{e}odory distances and discuss several related notions. On H-type Carnot groups, quasiregular mappings have been introduced earlier using an analytic definition, but so far, a good working definition in the same spirit is not available in the setting of general sub-Riemannian manifolds. In the present paper we adopt therefore a metric rather than analytic viewpoint.

As a first main result, we prove that the sub-Riemannian lens space admits nontrivial uniformly quasiregular (UQR) mappings, that is, quasiregular mappings with a uniform bound on the distortion of all the iterates. In doing so, we also obtain new examples of UQR maps on the standard sub-Riemannian spheres. The proof is based on a method for building conformal traps on sub-Riemannian spheres using quasiconformal flows, and an adaptation of this approach to quotients of spheres.

One may then study the quasiregular semigroup generated by a UQR mapping. In the second part of the paper we follow Tukia to prove the existence of a measurable conformal structure which is invariant under such a semigroup. Here, the conformal structure is specified only on the horizontal distribution, and the pullback is defined using the Margulis--Mostow derivative (which generalizes the classical and Pansu derivatives).
\end{abstract}

\date{\today}

\maketitle

\setcounter{tocdepth}{1}

\tableofcontents
\newpage
\section{Introduction}
A homeomorphism $f$ between two metric spaces is \emph{quasiconformal} if it has bounded infinitesimal distortion. That is, there exists a finite constant $H$ such that
\begin{equation} \label{eq:metricQC0}
\limsup_{r \to 0}\frac{\sup_{|x-y|\leq r}|fx-fy|}{\inf_{|x-y|\geq r}|fx-fy|}\leq H\quad\text{for all }x.
\end{equation}
 In many contexts, one broadens the definition to include \emph{locally} quasiconformal mappings, that is, \emph{covering mappings} $f$ satisfying
\begin{equation}  \label{eq:metricQC}
H(x,f):=\limsup_{r \to 0}\frac{\sup_{|x-y|\leq r}|fx-fy|}{\inf_{|x-y|= r}|fx-fy|}\leq H\quad\text{for all }x.
\end{equation}
While $H(x,f)$ is a reasonable quantity to measure the \emph{dilatation} or \emph{distortion} also for a non-injective mapping $f$, the expression appearing in \eqref{eq:metricQC0} would be inappropriate in this regard since $\inf_{|x-y|\geq r}|fx-fy|$ may vanish for arbitrarily small values of $r$ even in points where $f$ is locally injective.

The theory of quasiconformal mappings is well-studied in a variety of contexts, see \cite{MR0385004,KR1,MR979599,KR2,HK,MR1654771,MR1869604}.  The focus of this paper is on \emph{branched covering} mappings satisfying \eqref{eq:metricQC} in the context of sub-Riemannian manifolds.

Recall that a sub-Riemannian manifold is a smooth manifold $M$ with a choice of smooth subbundle $HM \subset TM$ satisfying the H\"{o}rmander condition. That is, the iterated brackets of $HM$ span $TM$. A smooth choice of inner product on $HM$ then gives $M$ a sub-Riemannian path metric (\emph{Carnot-Carath\'{e}odory distance}). We will assume that every sub-Riemannian manifold is equipped with an inner product $g$, corresponding norm $\norm\cdot$, and distance function $d$.

A sub-Riemannian manifold is \emph{equiregular} if the dimension of $H^1M=HM$ and the bundles $H^{i+1}M=\langle H^i M, [HM,H^iM]\rangle$ is constant over all of $M$. One writes $q_i = \dim H^{i}M/H^{i-1}M$, so that the topological dimension of $M$ is given by $\dim M = \sum_i q_i$. The Hausdorff dimension, on the other hand, is given by $Q=\sum_i i q_i$, and the Hausdorff measure with respect to the cc-distance in dimension $Q$ is equivalent to Lebesgue measure on $M$ \cite{aMi85}. Without further specification, the term `measure' on a sub-Riemannian manifold will refer to one of these equivalent measures. We often write $|A|$ for the measure of a set $A$ in a sub-Riemannian manifold.

We say that a mapping between manifolds is a \emph{branched cover} if it is continuous, discrete, open, and sense-preserving (notice that we do not assume a branched cover to be onto). It is known that a branched cover $f$ between manifolds of dimension $n\geq 2$ is a local homeomorphism away from a \emph{branch set} $B_f$ of topological codimension at least two (Chernavskii's theorem, see \cite{Ri}).

\begin{defi}\label{d:metric_qr_subRiem:mfd}
Let $M$ and $N$ be two equiregular sub-Riemannian manifolds. We call a mapping $f: M \rightarrow N$  \emph{$K$-quasiregular} if it is constant or if
\begin{enumerate}
\item $f$ is a branched cover onto its image\\ (i.e., continuous, discrete, open and sense-preserving),
\item $H(\cdot,f)$ is locally bounded on $M$,
\item $H(x,f)\leq K$ for almost every $x\in M$,
\item the branch set $B_f$ and its image have measure zero.
\end{enumerate}
A  mapping is said to be \emph{quasiregular} if it is $K$-quasiregular for some $1\leq K<\infty$.
\end{defi}

\begin{remark}We expect that the last condition in Definition \ref{d:metric_qr_subRiem:mfd} is
unnecessary,
but proving this is beyond the scope of the current paper, as it requires the development of a modulus definition of quasiregularity, see \cite{Ri, MR2585391}. Note also that while in this paper the distortion will always be computed with respect to  Carnot-Carath\'{e}odory distances on the corresponding sub-Riemannian manifolds, the definition could be applied to more general metric spaces as well.
\end{remark}

A non-injective quasiregular mapping $f$ acting on  space $M$ is \emph{uniformly quasiregular} (UQR for short) if, for all $n$, we have that $f^{n}$ is $K$-quasiregular for some $K$ not depending on $n$ (note that a priori it is not clear that the composition of two quasiregular maps is quasiregular). Injective UQR mappings are called \emph{uniformly quasiconformal}.

Denote by $\Lambda$ the \emph{semigroup} generated by a UQR map $f:M\rightarrow M$, that is
\[\Lambda=\{f^n:M\rightarrow M\ |\ f \hbox{ uniformly quasiregular },\ n\in \N \}.\]
The \emph{Fatou set} of $f$ is
then defined as
\[F(f)=\{p\in M\ |\;\mbox{ there is an open }U,\;p\in U,\;\Lambda|_U\mbox{ normal}\},\]
where \emph{normal} means that every sequence of $\Lambda$ contains
a locally uniformly convergent subsequence. The \emph{Julia set}
of $f$ is then defined as $J(f)=M \setminus F(f)$.

 We prove:

\begin{thm}
\label{thm:intro:QRexists}
Every lens space, with its natural sub-Riemannian structure, admits a uniformly quasiregular self-mapping with nonempty branch set and a Cantor set type Julia set.
\end{thm}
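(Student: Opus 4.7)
The plan is to build the UQR self-mapping on the sub-Riemannian sphere first, and then push the construction down to the lens space $L = \mathbb{S}/\Gamma$, where $\Gamma$ is a finite cyclic group of sub-Riemannian isometries acting freely on $\mathbb{S}$. Any UQR self-mapping of $L$ lifts to a $\Gamma$-equivariant UQR self-mapping of $\mathbb{S}$, so the strategy is to construct the lift.

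The first main step is the construction of a \emph{conformal trap} on $\mathbb{S}$. Fix a small ball $B_0$ and pick an attracting point $p \in B_0$. The trap map $f_0$ will be a composition $f_0 = \psi \circ w$, where $w$ is a branched "winding" map (a finite-to-one quasiregular branched cover of $B_0$ onto a larger target region $B_1$, modeled on the Reeb-type rotational symmetry of the Heisenberg sphere, which has controlled distortion and a nontrivial branch set fixed by the symmetry) and $\psi$ is a quasiconformal self-homeomorphism of $\mathbb{S}$, produced by a cut-off quasiconformal \emph{flow} emanating from $p$, that contracts $B_1$ into a very small neighborhood $B_2 \subset B_0$ of $p$. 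If the contraction of $\psi$ is strong enough to dominate the expansion of $w$, the composition $f_0$ maps $B_0$ strictly inside itself with uniformly bounded distortion. Because $\psi$ is locally bi-Lipschitz near $p$ and the branch set of each iterate is pushed deeper into $B_2$, all iterates $f_0^n$ have the same distortion bound; this is precisely the trap property. By standard dynamical arguments this already yields a UQR map of $\mathbb{S}$ whose Fatou set contains $B_0$ and whose Julia set is a Cantor set consisting of the closure of the backward orbit of the branch set.

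The second main step is to make the construction $\Gamma$-equivariant so that it descends to $L$. Choose a fundamental domain $D \subset \mathbb{S}$ for $\Gamma$ containing $B_0$, $B_1$, and the support of the flow defining $\psi$, with all of $\bigcup_{\gamma \in \Gamma}\gamma(B_1)$ pairwise disjoint. Define $\tilde{f}: \mathbb{S} \to \mathbb{S}$ to be the identity outside $\bigcup_{\gamma}\gamma(B_1)$ and, on each translate $\gamma(B_1)$, to equal $\gamma \circ f_0 \circ \gamma^{-1}$. Since $\Gamma$ acts by sub-Riemannian isometries, each copy is the same map up to an isometric change of coordinates, and $\tilde{f}$ is $\Gamma$-equivariant by construction. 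It is continuous because $f_0$ is the identity on $\partial B_1$ (arrange $\psi$ to be supported inside $B_1$), and its distortion equals that of $f_0$ on the trap regions and is trivial elsewhere. Thus $\tilde{f}$ descends to a UQR mapping $f: L \to L$ with nonempty branch set (the image of $B_w$), and the Cantor-type Julia structure is preserved because $\Gamma$ acts properly discontinuously and isometrically, so the quotient of a Cantor set of Julia points in $D$ remains Cantor in $L$.

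The hardest step to carry out rigorously is the construction of the quasiconformal flow with the required localization. One needs a flow on $\mathbb{S}$ that contracts a given ball into an arbitrarily small neighborhood of $p$ while being compactly supported inside $D$ and remaining quasiconformal with a controlled distortion; patching a radial contracting flow with a cut-off loses quasiconformality unless the interpolation is performed carefully (e.g.\ using a linearization chart in exponential coordinates around $p$ and estimating Pansu-differentials of the time-one map). Once this flow lemma is in hand, verifying that $f_0$ is a trap and that $\tilde{f}$ is a genuine branched cover with the required dynamical properties is essentially bookkeeping, relying on the fact that composition preserves quasiregularity provided the intermediate images miss the branch locus, which is exactly what the trap arranges.
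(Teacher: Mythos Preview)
Your construction has a genuine topological obstruction at the gluing step. You propose $\tilde f$ to equal the identity outside a finite union of balls and, on each ball, a degree-$d$ branched ``trap'' map $f_0$ sending the ball strictly inside itself. But a continuous self-map of the sphere that agrees with the identity on an open set has global degree $1$, and a map of degree $1$ that is open and discrete is a homeomorphism; it cannot have branch points. Concretely, if $f_0(B_0)=B_2\subsetneq B_0$ and $\tilde f=\mathrm{id}$ outside $B_0$, then the annular region $B_0\setminus \overline{B_2}$ lies in the complement of the image of $\tilde f$, so $\tilde f$ is not open and hence not quasiregular in the sense of the paper. Your attempt to arrange ``$f_0$ is the identity on $\partial B_1$'' does not resolve this: whatever the precise nesting of $B_0$ and $B_1$, a map that is the identity on the outer boundary cannot simultaneously wrap the interior $d$-to-$1$ onto a strictly smaller ball while remaining open. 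The same objection applies to each $\Gamma$-translate, so the equivariant extension inherits the defect.

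The paper avoids this by starting from a \emph{global} branched cover, the multi-twist map $F_a(r_je^{i\theta_j})=(r_je^{ia\theta_j})$ on $\mathbb S^{2n+1}$, which already has degree $a^{n+1}$ and nonempty branch set, and then modifying it only on finitely many small balls (via Libermann contact flows) so that it becomes an \emph{isometry} there; no branching is introduced locally. The trap mechanism is not a local contraction but a post-composition with a conformal \emph{inversion} $\iota$ of the sphere: since $\iota$ exchanges a small ball $B$ with its complement, any point outside the (isometric) preimage balls is sent into the trap $\pi(B)$ after a single application and sees no further distortion. For the lens space one does not replicate a local construction over $\Gamma$-translates; instead, because $p\mid a$, the multi-twist map already factors as $f_a:L_{p,\mathbf q}\to\mathbb S^{2n+1}$, and the UQR map is $g=\pi\circ\iota\circ g_1$ with $g_1$ the locally-modified $f_a$. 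Thus the passage to the quotient is built into the algebra of the multi-twist map rather than achieved by equivariant patching.
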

We proceed to focus on an invariant of UQR mappings. In the setting of Riemannian spheres, Iwaniec and Martin \cite{MR1404085} proved the existence of an invariant measurable structure for any abelian uniformly quasiregular semigroup. Tukia established earlier the corresponding result for groups of quasiconformal mappings, and \cite{MR1404085} is an adapatation of this approach to the non-injective setting. Beyond the Riemannian setting, the existence of an invariant conformal structure for groups of quasiconformal mappings on compactified Heisenberg groups $\mathbb{H}^n$, $n\in \mathbb{N}$, was first proved by Chow in \cite{MR1329530}. In \cite{MR2927672}, the case of non-injective quasiregular mappings was studied independently for the first Heisenberg group $\mathbb{H}^1$, where invariant conformal structures can be related to invariant CR structures. It is not clear how this relation would generalize to higher dimensions.

In the present paper, generalizing \cite{MR1329530}, we define a \emph{conformal structure} on a sub-Riemannian manifold $M$ as an inner product on the horizontal bundle $HM$, up to rescaling (cf.~ also the `sub-conformal structures' on contact manifolds in \cite[Definition 2.1.~(2)]{MR2293640}).
That is, two inner products $\langle \cdot, \cdot\rangle_1$ and $\langle \cdot, \cdot\rangle_2$ represent the same conformal structure if we have $\langle \cdot, \cdot\rangle_1 = \lambda(x) \langle \cdot, \cdot\rangle_2$ for some positive function $\lambda(x)$ on $M$. We say that a conformal class is \emph{measurable} if it is represented by a measurable choice of an inner product (that is possibly undefined on a set of measure 0).

If the sub-Riemannian manifold $M$ is equipped with an inner product $\langle \cdot, \cdot \rangle$, one can in a natural way interpret a new inner product $\langle \cdot, \cdot\rangle_1$ as function $s:M \to S$ to a higher-rank symmetric space  (see the proof of Theorem \ref{thm:tukia}). The conformal class of $\langle \cdot, \cdot\rangle_1$ is said to be \emph{bounded} (with respect to $\langle \cdot, \cdot\rangle$) if $s$ is a bounded map.

A conformal class represented by $\langle \cdot, \cdot\rangle$ is \emph{invariant} under a quasiregular mapping $f$ if we have
\[
\langle f_*u, f_*v\rangle = \lambda(x) \langle u, v\rangle
\]
for some positive function $\lambda$ and any choice of vectors $u,v\in H_xM$. We formalize the meaning of $f_*v$ in \S \ref{sec:differential} using the Margulis--Mostow derivative (which is equivalent to the Pansu derivative when a Carnot group is concerned). The discussion of differentiability leads to the following question:

\begin{question}
Let $f$ be a mapping between sub-Riemannian manifolds with a continuous Margulis--Mostow derivative. Is $f$ then continuously differentiable in the classical sense? (See \S \ref{sec:bundles}, where we prove the measurability of the Margulis--Mostow derivative.)
\end{question}

After addressing the differentiability questions, we prove in \S \ref{sec:invariant}:
\begin{thm}
\label{thm:intro-conformal}
Every uniformly quasiregular self-mapping $f$ of a sub-Riemannian manifold admits an invariant measurable conformal structure that is bounded with respect to the given inner product.
\end{thm}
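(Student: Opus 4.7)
The plan is to adapt the classical Tukia and Iwaniec--Martin strategy to the sub-Riemannian setting, using the Margulis--Mostow derivative developed in $\S\ref{sec:differential}$. For almost every $x \in M$ and every iterate $n \geq 0$, the Margulis--Mostow derivative of $f^n$ at $x$ exists and restricts to a linear isomorphism $D^H_x f^n : H_xM \to H_{f^n(x)} M$ of the horizontal layer whose distortion with respect to $g_0$ is bounded by a constant $C(K)$ depending only on the UQR constant $K$ of $f$, uniformly in $n$. The fiber $\mathcal{S}_x$ of inner products on $H_xM$ modulo positive scaling is a nonpositively curved symmetric space isometric to $\mathrm{SL}(q_1,\mathbb{R})/\mathrm{SO}(q_1)$. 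I encode the pullback action of $f^n$ pointwise via
\begin{equation*}
g_n(x) := (D^H_x f^n)^{*} g_0(f^n(x)) \in \mathcal{S}_x;
\end{equation*}
the uniform distortion bound then guarantees that the orbit $\{g_n(x)\}_{n \geq 0}$ stays in a CAT$(0)$ ball of radius $C'(K)$ about $g_0(x)$, independently of $x$.

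To extract an invariant section, I use Ces\`aro barycentric averages in the CAT$(0)$ metric on $\mathcal{S}_x$. The Karcher mean of a finite set in a CAT$(0)$ space is unique and isometry-equivariant, so set
\begin{equation*}
s_N(x) := \operatorname{bar}\{g_0(x), g_1(x), \ldots, g_{N-1}(x)\},
\end{equation*}
which defines a measurable section of the bundle $\bigsqcup_x \mathcal{S}_x$. The pullback operator $T(s)(x) := (D^H_x f)^{*} s(f(x))$ is a fiberwise isometry from $\mathcal{S}_{f(x)}$ to $\mathcal{S}_x$, and by construction sends $g_n(f(x))$ to $g_{n+1}(x)$; hence $T(s_N)(x)$ is the barycenter of the shifted set $\{g_1(x),\ldots,g_N(x)\}$, and a standard CAT$(0)$ barycenter comparison yields $d_{\mathcal{S}_x}(T(s_N)(x), s_N(x)) = O(C'(K)/N)$ uniformly in $x$.

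The sought invariant conformal structure $s_\infty$ is obtained as a cluster point of $(s_N)$. Each $s_N$ takes values in the closed ball of radius $C'(K)$ about $g_0$ in its fiber, and by local compactness of $\mathcal{S}_x$ a measurable cluster point $s_\infty$ can be produced, either via a Banach limit in the bundle of bounded measurable sections or via Kuratowski--Ryll-Nardzewski selection applied to the pointwise cluster set. The displayed $O(1/N)$ estimate then forces $T(s_\infty) = s_\infty$ almost everywhere, which is precisely the required invariance $(D^H_x f)^{*} s_\infty(f(x)) = \lambda(x)\, s_\infty(x)$ after choosing a representative inner product; measurability is inherited from measurability of each $s_N$, and boundedness with respect to $g_0$ follows from the uniform orbit bound. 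The principal obstacle is the passage from pointwise Ces\`aro cluster points to a single \emph{measurable} invariant section; this will be handled by working in the space of bounded measurable sections as a whole, viewed as a closed bounded convex set on which $T$ acts non-expansively, and invoking a Bruhat--Tits- or Markov--Kakutani-type fixed-point argument for the cyclic semigroup generated by $T$.
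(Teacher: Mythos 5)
Your overall architecture matches the paper's: the Margulis--Mostow derivative restricted to the horizontal layer gives, after deleting a suitable null set, a measurable bundle map of uniformly bounded distortion, and the conformal structures on a fiber form the nonpositively curved symmetric space $SL(q_1,\R)/SO(q_1)$ in which the pullback orbit stays in a ball of radius depending only on $K$. The divergence --- and the gap --- lies in how the invariant section is extracted. The paper (following Tukia) takes the \emph{circumcenter} of the orbit set $\mathcal{S}(p)$, i.e.\ the center of the unique smallest ball containing it. Because this point is canonically determined by the set, exact invariance falls out of the isometry-equivariance of the orbit sets, and measurability follows from continuity of the circumcenter operation in the Hausdorff topology. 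You instead form Ces\`aro--Karcher means $s_N$, which are only \emph{asymptotically} invariant: the estimate $d(T(s_N)(x),s_N(x))=O(1/N)$ is correct (it follows from Sturm's contraction property of $\mathrm{CAT}(0)$ barycenters applied to two empirical measures differing in one atom), but the whole difficulty is thereby pushed into the limit $N\to\infty$, and none of the three devices you propose there works as stated.

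Concretely: (i) a Kuratowski--Ryll-Nardzewski selection from the pointwise cluster sets is measurable but need not be invariant, since the subsequences along which $s_N(x)$ and $s_N(f(x))$ converge to the selected points need not agree, while your almost-invariance estimate only couples $s_N(x)$ to $s_N(f(x))$ for the \emph{same} $N$; (ii) a Banach limit or ultralimit cures the subsequence problem but destroys measurability, since a pointwise Banach limit of measurable functions is in general not measurable (this is the medial-limit issue, which requires extra set-theoretic hypotheses); (iii) Markov--Kakutani does not apply because the space of sections of the symmetric-space bundle is not a topological vector space and $T$ is not affine, while a Bruhat--Tits-type argument on the section space requires a complete $\mathrm{CAT}(0)$ metric (the $L^2$ metric, not the sup metric) together with nonexpansiveness of $T$, and the change of variables in the $L^2$ norm fails to be nonexpansive because $f$ is only absolutely continuous, not measure-preserving. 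Your construction can be repaired while keeping the Ces\`aro means: replace the cluster point by the \emph{asymptotic center} of the bounded sequence $(s_N(x))_N$, which is unique, isometry-equivariant, and unchanged when the sequence is replaced by the asymptotic sequence $(T(s_N)(x))_N$; this restores exact invariance and, with some additional work, measurability. The cleaner route, however, is the paper's: apply a canonical circumcenter to the orbit set itself and dispense with the averaging altogether.
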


The structure of the paper is as follows. After discussing basic properties of QR mappings in \S \ref{sec:QR}, we provide an explicit QR mapping on lens spaces in \S \ref{sec:QR_lens}. We apply a ``conformal trap'' construction in \S \ref{sec:UQR_lens} to transform it into a UQR mapping and thus prove Theorem \ref{thm:intro:QRexists}. Following a review on differentiability in \S \ref{sec:differential}, we prove Theorem \ref{thm:intro-conformal} in \S \ref{sec:invariant}.

\section{Quasiregular mappings}\label{sec:QR}
The theory of quasiregular mappings in $\R^n$ developed as a generalization of the study of complex-analytic mappings of one variable. A crucial result is the following Picard-type theorem (see also \cite{DP}):
\begin{thm}[Rickman]
Let $f: \R^n \rightarrow \R^n$ be a non-constant $K$-quasiregular mapping. Then $f$ misses at most finitely many points, with the number of points bounded above in terms of $K$ and $n$.
\end{thm}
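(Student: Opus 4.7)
The plan is to follow Rickman's strategy, which is a sweeping generalization of Nevanlinna value distribution theory to the non-holomorphic, higher-dimensional setting. Suppose for contradiction that a non-constant $K$-quasiregular $f \colon \mathbb{R}^n \to \mathbb{R}^n$ omits $q$ values $a_1, \ldots, a_q$; the goal is to show that $q \leq q_0(n, K)$. After a spherical M\"obius normalization I would arrange the $a_i$ to be well-separated on the sphere $\mathbb{S}^n$ (pairwise spherical distances bounded below in terms of $q$ alone), so that the local geometry near each omitted value is uniform and the subsequent capacity estimates are quantitative.

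The central technical device is the modulus inequality for quasiregular mappings, which transports the $n$-modulus $M(\Gamma)$ of a path family $\Gamma$ across $f$ up to a multiplicative constant depending on $K$ and the local multiplicity. Using this, one attempts to derive incompatible bounds on the moduli of two carefully chosen path families. Consider a large spherical shell $A_R = B(0, 2R) \setminus \overline{B(0, R)}$ in the source; the family $\Gamma_R$ of curves joining its two boundary spheres has modulus uniformly bounded in $R$. Its image $f\Gamma_R$ consists of paths in $\mathbb{R}^n \setminus \{a_1, \ldots, a_q\}$ that are forced to detour around each omitted value. Bounding the modulus of such ``heavily punctured'' path families from below in terms of $q$ (via capacity estimates for $q$-tuples on the sphere) and combining with the modulus inequality then pushes $q$ below a constant depending only on $n$ and $K$.

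The main obstacle, and the genuine heart of Rickman's proof, is establishing an analogue of Nevanlinna's second main theorem, the so-called Rickman defect relation. Without this, the naive modulus comparison only shows that the omitted set has vanishing $n$-capacity, which is weaker than being finite: it does not rule out, for instance, an omitted Cantor set. The defect relation requires a delicate covering construction near each $a_i$, together with sharp estimates on counting and proximity functions and an iterative control of how branches of $f^{-1}$ distribute over shells. This is where all the hard analysis is concentrated; once it is in place, the finiteness statement and the explicit dependence of $q_0$ on $K$ and $n$ follow from the modulus dichotomy above applied uniformly in the scale $R$.
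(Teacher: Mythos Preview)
The paper does not prove this statement at all: Rickman's Picard-type theorem is quoted as a background result, with a citation to the literature, and no argument is given in the paper. There is therefore nothing in the paper against which to compare your proposal.

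That said, your outline is a fair high-level description of Rickman's original strategy (defect relation via modulus/capacity estimates leading to a bound $q \le q_0(n,K)$). You correctly identify that the naive modulus comparison only yields zero $n$-capacity of the omitted set, and that the real work lies in the second-main-theorem analogue. As a sketch this is accurate, though of course it is far from a proof: the covering and counting-function machinery you allude to occupies the bulk of Rickman's monograph and cannot be summarized in a paragraph.
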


In the classical theory, one starts with the assumption that the quasiregular mapping $f: \R^n \rightarrow \R^n$ is in the Sobolev class $W^{1,n}_{loc}$, with the differential $df$ satisfying the condition $\norm{df}^n \leq K Jf$ almost everywhere. One then proves that the analytic assumptions are equivalent to the topological and metric properties that we take in this paper as the definition of quasiregularity.

Recall that Carnot groups (with the Heisenberg group as the primary example) provide a generalization of Euclidean space admitting a sub-Riemannian metric. Namely, these are simply connected nilpotent Lie groups that admit a left-invariant sub-Riemannian metric and a one-parameter family of homotheties.

Heinonen and Holopainen first generalized the notion of quasiregularity and proved basic properties of quasiregular mappings on Carnot groups in \cite{MR1630785}. Their assumptions were later relaxed by Dairbekov in \cite{MR1721676}. Quasiregular mappings satisfying these weaker conditions were then studied in general Carnot groups and the restricted setting of the Heisenberg group and other two-step Carnot groups  in \cite{MR1847514,MR2252688,MR2316342} and other works. As in the classical case, these authors started with the analytic definition of quasiregularity. This definition does not easily generalize to arbitrary sub-Riemannian manifolds.

Following the Heinonen--Koskela approach (see \cite{HK}) of rephrasing quasiconformality in terms of strictly metric properties, we start with a \textbf{metric} definition of quasiregularity. Note that our definition is akin to the characterization of Euclidean quasiregular mappings in \cite{MR0259114}, see also \cite{Ri}, and to the definition of quasiregularity in \cite{MR2585391}.

So far, important analytic properties -- such as Sobolev regularity and the $K_O$-inequality -- of metrically defined quasiregular mappings are only known
in Carnot groups, see the discussion in \cite[4]{MR2679532}, or for Euclidean source spaces, which is the case considered in
\cite{MR2585391}. We conjecture that analogous properties hold for mappings between equiregular sub-Riemannian manifolds, but do not pursue this further in the present paper.

\subsection{Rectifiable curves}
\label{sec:rectifiable}
We first recall some terminology and notation for curves, spheres, and balls in metric spaces (see also for instance \cite{MR1421821}).

Let $M$ be a sub-Riemannian manifold and $\gamma:[a,b]\rightarrow M$ a continuous curve. Recall that the \emph{length} of $\gamma$ is defined as
\begin{equation*}
\mathrm{length}(\gamma) = \sup \sum d(\gamma(t_i),\gamma(t_{i-1}))\leq \infty,
\end{equation*}
where the supremum is taken over all partitions of $[a,b]$. A curve $\gamma:[a,b] \to M$ is a \emph{geodesic} if $d(\gamma(a),\gamma(b))=\mathrm{length}(\gamma)$. By the Hopf-Rinow theorem, any pair of sufficiently nearby points in $M$ can be joined by a geodesic. If $M$ is complete, then in fact \textbf{any} two points can be joined by a geodesic.

Recall further that $\gamma$ is said to be \emph{rectifiable} if $\mathrm{length}(\gamma)<\infty$, in which case it can be given a Lipschitz reparametrization. An absolutely continuous curve $\gamma:[a,b] \to M$ in a sub-Riemannian manifold, in particular a Lipschitzly parametrized curve, is rectifiable if and only if it is horizontal, that is, $\dot{\gamma}(t)\in H_{\gamma(t)}M$ for a.e.\ $t$. In this case the length of $\gamma$ can be computed using the sub-Riemannian inner product $g$ as
\begin{equation*}
\mathrm{length}(\gamma) = \int_a^b \norm{\dot{\gamma}(t)} dt.
\end{equation*}

We will denote balls and spheres (respectively) in a metric space $(M, d)$ as follows:
\begin{equation*}
B(x,r)=\{y\in M:\; d(x,y)<r\}\quad\text{and}\quad S(x,r)=\{y\in M:\; d(x,y)=r\}.
\end{equation*}

\subsection{Definitions of dilatation}
In this subsection, we clarify the different definitions of dilatation in the introduction and show that a quasiregular map is locally quasiconformal away from the branch set. We then provide a way to compute the maximum dilatation of a quasiregular map under some smoothness conditions.

\begin{lemma}\label{l:ll}
Let $Y$ be a locally geodesic space and $f:X \to Y$ a homeomorphism. Then for every $x\in X$ and $r>0$ small enough (depending continuously on $x$),
\begin{equation}
\label{eq:ellellstar}
\ell := \inf_{|x-y|\geq r}|fx-fy|\geq \inf_{|x-y|=r}|fx-fy|=\ell^{\ast}.
\end{equation}
\begin{proof}\footnote{Jeremy: notation is confusing. Too many $x$, $x'$, $y$, etc. K: not sure what is best here. (i) write $X'$ for the target space and use $x,y,z$ for the three relevant points in the source, or (ii) leave target space $Y$ and consider $x,x',x''$ in $X$ (in any case I think the minimizing point can be denoted by the same letter as what will appear in \eqref{eq:ellellstar})}
Without loss of generality, we may assume $Y$ that is geodesic. To see this, let $U$ be a geodesic open neighborhood of $fx$. Since both $\ell$ and $\ell^\ast$ approach $0$ as $r \rightarrow 0$, we have that for sufficiently small $r$, the infima in \eqref{eq:ellellstar}  remain unchanged if solely points $y$ in $f^{-1}(U)$ are considered.\footnote{K: changed this sentence}
We may therefore replace $X$ with $f^{-1}U$ and $Y$ with $U$.

Fix $0<r \leq \text{diam}(X)$, and let $x'\in X$ minimize the expression $\inf_{|x-y|\geq r}|fx-fy|$ (perhaps up to a small error term). Let $\gamma$ be a 
geodesic\footnote{K: replaced `curve' by `geodesic' here}
 joining $fx'$ to $fx$, and $f^{-1}\gamma$ its preimage in $X$. By continuity of the metric on $X$, there is an $x''$ on $f^{-1}\gamma$ with $\norm{x-x''}=r$. Since $\gamma$ is a geodesic, we have $\norm{fx-fx''}\leq \norm{fx-fx'}$.
\end{proof}
\end{lemma}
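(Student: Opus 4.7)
\bigskip

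\noindent\textbf{Proof plan.}
The nontrivial direction is $\ell \geq \ell^{\ast}$; the reverse inequality is immediate from the inclusion $\{y : |x-y|=r\} \subset \{y : |x-y|\geq r\}$.

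\emph{Step 1 (Reduction to the geodesic case).} Since $Y$ is locally geodesic, choose a geodesic open neighborhood $U$ of $fx$. Its radius can be chosen to vary continuously (indeed lower-semicontinuously) with $fx$, hence with $x$. Both $\ell$ and $\ell^{\ast}$ are bounded above by $\sup_{|x-y|\leq r}|fx-fy|$, which tends to $0$ as $r\to 0$ by continuity of $f$. Thus for $r$ small enough, depending continuously on $x$, any $y$ contributing near the infimum in either expression must satisfy $fy\in U$. We may therefore replace $X$ by $f^{-1}(U)$ and $Y$ by $U$, and assume $Y$ is geodesic.

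\emph{Step 2 (Approximate minimizer and geodesic pullback).} Fix $\varepsilon>0$. Pick $y\in X$ with $|x-y|\geq r$ and $|fx-fy|\leq \ell + \varepsilon$. Let $\gamma\colon[0,1]\to Y$ be a geodesic from $fx$ to $fy$, and let $\widetilde{\gamma} := f^{-1}\circ \gamma$ be its preimage in $X$; this is a continuous curve from $x$ to $y$ because $f$ is a homeomorphism.

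\emph{Step 3 (Intermediate value on the preimage).} The real-valued function $t\mapsto |x - \widetilde{\gamma}(t)|$ is continuous, equals $0$ at $t=0$, and is at least $r$ at $t=1$. By the intermediate value theorem there exists $t_0\in(0,1]$ with $|x-\widetilde{\gamma}(t_0)| = r$. Set $y' := \widetilde{\gamma}(t_0)$, so $fy' = \gamma(t_0)$.

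\emph{Step 4 (Conclusion).} Since $\gamma$ is a geodesic in $Y$ and $fy'$ lies on it between $fx$ and $fy$, we have $|fx - fy'|\leq |fx-fy|$. Because $|x-y'|=r$, the point $y'$ is a valid competitor for $\ell^{\ast}$, so
\[
\ell^{\ast} \;\leq\; |fx - fy'| \;\leq\; |fx - fy| \;\leq\; \ell + \varepsilon.
\]
Letting $\varepsilon\to 0$ yields $\ell^{\ast}\leq \ell$, completing the proof.

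\emph{Main obstacle.} The only nontrivial point is Step 1: making precise the statement that the ``admissible'' $r$ depends continuously on $x$. This reduces to checking that the radius of a geodesic neighborhood in $Y$ can be chosen continuously in the base point (standard for locally geodesic spaces), combined with the uniform smallness of $\sup_{|x-y|\leq r}|fx-fy|$ on compact sets via continuity of $f$. Once that reduction is made, the pullback-of-a-geodesic argument in Steps 2--4 is essentially forced.
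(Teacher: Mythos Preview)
Your proof is correct and follows essentially the same approach as the paper's: reduce to the geodesic case via a small neighborhood of $fx$, take an approximate minimizer for $\ell$, pull back a geodesic in $Y$ through $f^{-1}$, and use the intermediate value theorem on the distance-to-$x$ function along the preimage curve to produce a competitor for $\ell^{\ast}$. Your version is slightly more explicit about the $\varepsilon$-argument and the invocation of the intermediate value theorem, but the underlying idea is identical.
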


\begin{prop}\label{p:qr_loc_qc} Let $f:M\to N$  be a quasiregular map between equiregular sub-Riemannian manifolds. Then almost every point in $M$ possesses a neighborhood $U$ such that $f: U \rightarrow f(U)$ is quasiconformal.
\end{prop}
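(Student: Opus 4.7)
The strategy is to locate a full-measure set of ``good'' points where $f$ is a local homeomorphism with controlled distortion, and then upgrade the quasiregular distortion bound to a quasiconformal one via Lemma~\ref{l:ll}.

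First I would use the topological structure of the branch set. By Chernavskii's theorem, $f$ is a local homeomorphism at every point of $M \setminus B_f$, and condition~(4) of Definition~\ref{d:metric_qr_subRiem:mfd} guarantees that $B_f$ has measure zero. It therefore suffices to show that \emph{every} $x_0 \in M \setminus B_f$ admits a neighborhood $U$ on which $f|_U$ is a quasiconformal homeomorphism. Given such an $x_0$, start with an open neighborhood $U_0 \ni x_0$ on which $f$ is injective; by the openness of $f$ (condition~(1)), the restriction $f|_{U_0}$ is then a homeomorphism onto the open set $f(U_0) \subset N$. Shrink $U_0$ to an open neighborhood $U \ni x_0$ with compact closure contained in $U_0$ so that, invoking the local boundedness in condition~(2), we have $H(x,f) \leq H_0$ for \emph{every} $x \in U$, not merely at almost every point.

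Second, I would convert this pointwise quasiregular bound into a quasiconformal one using Lemma~\ref{l:ll}. The restriction $f|_U : U \to f(U)$ is a homeomorphism, and $f(U)$ is an open subset of the sub-Riemannian manifold $N$, hence locally geodesic by the Hopf--Rinow discussion in \S\ref{sec:rectifiable}. Lemma~\ref{l:ll} applied at each $x \in U$ yields
\[
\inf_{|x-y|\geq r}|fx-fy| \;\geq\; \inf_{|x-y|=r}|fx-fy|
\]
for all sufficiently small $r$; the reverse inequality is automatic since $\{y : |x-y|=r\} \subset \{y : |x-y|\geq r\}$. Consequently the two infima coincide for $r$ small, so the limsup in the quasiconformality expression~\eqref{eq:metricQC0} equals $H(x,f)$ at each $x \in U$, and is therefore bounded above by $H_0$ uniformly on $U$. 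This exhibits $f|_U$ as an $H_0$-quasiconformal homeomorphism onto $f(U)$.

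I do not anticipate a serious obstacle here. The only delicate point is that the threshold $r$ in Lemma~\ref{l:ll} varies continuously with $x$, but since only the limsup as $r \to 0$ enters the quasiconformality condition, this dependence is harmless. The proof reduces to combining the four conditions of Definition~\ref{d:metric_qr_subRiem:mfd} with Chernavskii's theorem and the already-established Lemma~\ref{l:ll}.
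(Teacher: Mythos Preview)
Your proof is correct and follows essentially the same route as the paper: reduce to points outside the null branch set, use local boundedness of $H(\cdot,f)$ to get a neighborhood where $f$ is a homeomorphism with uniformly bounded $H(\cdot,f)$, and invoke Lemma~\ref{l:ll} to pass from the quasiregular distortion~\eqref{eq:metricQC} to the quasiconformal one~\eqref{eq:metricQC0}. The paper adds one further remark you omit: on an open subset $\Omega$ of a sub-Riemannian manifold there are two natural distances (the restriction of $d_M$ and the intrinsic cc-distance $d_\Omega$), and one should observe that these agree on small balls so that the distortion bound transfers to the intrinsic metrics on $U$ and $f(U)$ as well; this is a minor point and does not affect the validity of your argument.
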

\begin{proof}
The statement follows from the definition of $B_f$ as the set of points where $f$ does not define a local homeomorphism. Since the branch set is closed, each point $x\in M\setminus B_f$ possesses a neighborhood $U_x$ restricted to which $f$ is a homeomorphism and, in fact quasiconformal by the local and essential boundedness of $H(\cdot,f)$. The only point worth noting here is that the definition of quasiconformality on metric spaces requires the boundedness of the distortion function \eqref{eq:metricQC0}, whereas quasiregularity yields only that the a priori smaller function $H(\cdot,f)$ as defined in \eqref{eq:metricQC} is bounded. Lemma \ref{l:ll} ensures that we obtain the same class of quasiconformal mappings for both distortion functions.

Now if we consider an open subset $\Omega$ in a sub-Riemannian manifold $M$, there are two natural distance functions on $\Omega$: the restriction of the cc-distance $d_M$ and the cc-distance $d_{\Omega}$ which is given by the restriction of the sub-Riemannian metric $g_M$ to $\Omega$. Since $M$ is locally geodesic, the two metrics agree on small balls inside $\Omega$. By Lemma \ref{l:ll} the uniform boundedness of $H(\cdot,f)$, where the distortion is computed with respect to $d_M$ and $d_N$, then implies the uniform boundedness of the `quasiconformal distortion' \eqref{eq:metricQC0} in terms of $d_{\Omega}$ and $d_{f(\Omega)}$.
\end{proof}

In the study of uniformly quasiregular mappings later on, the precise value of a mapping's distortion will be relevant. For this purpose we record the following result.

\begin{prop}\label{p:smooth_K_qr}
Let $f:M\to N$ be a quasiregular map between sub-Riemannian manifolds such that each point in $M\setminus B_f$ possesses an open neighbourhood restricted to which $f$ is a diffeomorphism and  in which the functions
\(
&\lambda_{-}(y) = \inf_{v\in H_yM} \frac{ \norm{f_{\ast}v}_N}{ \norm{v}_M}
&\lambda_{+}(y) = \sup_{v\in H_yM} \frac{ \norm{f_{\ast}v}_N}{ \norm{v}_M}
\)
(with $v$ always nonzero, and $f_{\ast}$ denoting the usual derivative)
are continuous and satisfy
\begin{equation*}
\frac{\lambda_{+}(y)}{\lambda_{-}(y)}\leq K.
\end{equation*}
Then  $f$ is $K$-quasiregular.
\end{prop}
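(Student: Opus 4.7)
The plan is to verify the distortion inequality $H(x,f) \leq K$ almost everywhere; the other requirements for $K$-quasiregularity are already provided by the assumption that $f$ is quasiregular. Since $|B_f|=0$, it suffices to show
\[
H(x,f) \leq \frac{\lambda_+(x)}{\lambda_-(x)}
\]
at every point $x \in M \setminus B_f$; the pointwise bound $\lambda_+(x)/\lambda_-(x) \leq K$ from the hypothesis then completes the reduction.

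Fix such an $x$ and let $U$ be the neighborhood on which $f$ restricts to a diffeomorphism with continuous $\lambda_{\pm}$. For the numerator of $H(x,f)$, given small $r>0$ and any $y$ with $d(x,y) \leq r$, I would use the sub-Riemannian Hopf--Rinow theorem to choose a horizontal curve $\gamma$ from $x$ to $y$ of length at most $r + \varepsilon$; for $r$ small, $\gamma$ lies in $U$, so
\[
d(fx, fy) \leq \mathrm{length}(f \circ \gamma) = \int \norm{f_\ast \dot\gamma(t)}_N \, dt \leq (\lambda_+(x) + o(1))(r + \varepsilon),
\]
where $o(1) \to 0$ as $r \to 0$ by continuity of $\lambda_+$ along $\gamma$. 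For the denominator, given $y$ with $d(x,y) = r$, choose a horizontal curve $\gamma'$ in $N$ from $fx$ to $fy$ of length at most $d(fx,fy) + \varepsilon$; for $r$ small, $\gamma'$ lies in $f(U)$, so $f^{-1} \circ \gamma'$ is a horizontal curve from $x$ to $y$. Applying the defining inequality for $\lambda_-$ to $v = (f_\ast)^{-1} w$ gives $\norm{(f_\ast)^{-1} w}_M \leq \norm{w}_N / \lambda_-$ pointwise, hence
\[
r \leq \mathrm{length}(f^{-1} \circ \gamma') \leq \frac{d(fx, fy) + \varepsilon}{\lambda_-(x) - o(1)}.
\]

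Letting $\varepsilon \to 0$ in the lower estimate yields $\inf_{d(x,y) = r} d(fx, fy) \geq (\lambda_-(x) - o(1)) r$, while the first estimate yields the matching upper bound on $\sup_{d(x,y) \leq r} d(fx, fy)$. Taking $\limsup_{r \to 0}$ of the ratio gives $H(x,f) \leq \lambda_+(x)/\lambda_-(x) \leq K$, as required. The main technical issue is confirming that the auxiliary curves stay inside $U$ (respectively $f(U)$) for all sufficiently small $r$, which follows from continuity of $f$ and $f^{-1}$ together with the openness of $U$ and $f(U)$; a secondary implicit point is that finiteness of $\lambda_\pm(y)$ forces $f_\ast(H_yM) \subset H_{f(y)}N$, so that the horizontal lengths appearing throughout the computation are well defined.
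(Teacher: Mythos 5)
Your proposal is correct and follows essentially the same route as the paper's proof: an upper bound on $d_N(fx,fy)$ by pushing forward a (near-)geodesic from $x$ and integrating $\lambda_+$, a lower bound by pulling back a (near-)geodesic in $N$ and using $\lambda_-$, then letting $r\to 0$ with continuity of $\lambda_\pm$. The only cosmetic difference is that the paper works with exact geodesics (via Hopf--Rinow and a radius $r'<r$ ensuring the target geodesic stays in $f(B(x,r))$) where you use $\varepsilon$-almost-minimizing curves, which changes nothing of substance.
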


\begin{proof}
Since the branch set of a quasiregular map is of zero measure, it suffices to show that
\begin{equation*}
H(x,f)\leq K,\quad \text{for all }x\in M\setminus B_f.
\end{equation*}
Let $x$ be a point in $M\setminus B_f$. Then there exists an $r>0$ such that $f|_{B(x,r)}$ is a diffeomorphism onto its image and $\lambda_-$, $\lambda_+$ are continuous functions on  $\overline{B}(x,r)$ whose quotient satisfies the desired bound. We may further assume that $r$ has been chosen small enough so that $x$ can be joined to any point $y$ in $B(x,r)$ by a geodesic $\gamma:[0,\ell] \to M$.
Then
\begin{align*}
d_N(f(x),f(y)) &\leq  \int \norm{f_{\ast} \dot{\gamma}(t)}_Ndt\\
                    &\leq  \int \lambda_{+}(\gamma(t))\norm{\dot{\gamma}(t)}_M dt\\
                    &\leq  \left(\sup_{t\in [0,\ell]} \lambda_{+}(\gamma(t))\right)\left( \int \norm{\dot{\gamma}(t)}_M dt\right)\\
                    &= \sup_{B(x,r)} \lambda_{+} \cdot d_M(x,y).
\end{align*}

For the reverse estimate, we notice that there is $r'<r$ such that for $z\in \overline{B}(x,r')$, the points $f(x)$ and $f(z)$ can be joined by a geodesic $\gamma':[0,\ell']\to N$ which lies entirely inside $f(B(x,r))$. Since $f$ is a local horizontal diffeomorphism, $\gamma'$ is the image of the horizontal curve $f^{-1}\circ \gamma'$ and
therefore
\begin{align*}
d_N(f(x),f(z))&= \mathrm{length}(\gamma')\\
&\geq \inf_{t\in [0,\ell']} \lambda_{-} (f^{-1}(\gamma'(t)) )\cdot\mathrm{length} (f^{-1} \circ \gamma')\\
&\geq \inf_{B(x,r)} \lambda_{-}  \cdot d_M(x,y)	
\end{align*}
It follows
\begin{equation*}
\frac{d_N(f(x),f(y))}{d_N(f(x),f(z)))}\leq \frac{\sup_{B(x,r)}\lambda_+}{\inf_{B(x,r)}\lambda_-}
\end{equation*}
and thus by letting $r\to 0$,
\begin{equation*}
H(x,f) \leq\frac{\lambda_+(x)}{\lambda_-(x)},
\end{equation*}
as desired.
\end{proof}

\subsection{Lusin's conditions}

A quasiconformal map between equiregular sub-Rie{\-}mannian manifolds is absolutely continuous in measure, see Theorem 7.1 in \cite{MargulisMostow95}. Since the inverse of such a map is again quasiconformal by Corollary 6.4 in \cite{MargulisMostow95}, we have both Lusin's condition  $(N)$ and $(N^{-1})$. That is, the image, respectively preimage, under a quasiconformal map of each set of measure zero is a set of measure zero. The same holds true for non-constant quasiregular mappings.

\begin{lemma}\label{l:lusin}
Let $f:M\to M$ be a non-constant quasiregular mapping on an equiregular sub-Riemannian manifold $M$ and let $E\subset M$ be a set with $|E|=0$. Then $|f(E)|=|f^{-1}(E)|=0$.
\end{lemma}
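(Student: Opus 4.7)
The plan is to split the argument into the part of $E$ (respectively $f^{-1}(E)$) lying inside the branch set, and the part lying outside of it, and to handle the two pieces separately using (a) the measure-zero hypotheses built into the definition of quasiregularity, and (b) the local quasiconformality off the branch set together with Lusin's conditions $(N)$ and $(N^{-1})$ for quasiconformal maps recalled just above the statement.

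First I would observe that by Chernavskii's theorem the branch set $B_f$ is closed (of topological codimension at least two), so $M \setminus B_f$ is open; moreover, by Proposition \ref{p:qr_loc_qc}, every point $x \in M \setminus B_f$ has an open neighborhood $U_x$ restricted to which $f$ is quasiconformal onto its image. Since $M$, being a smooth manifold, is second countable, $M \setminus B_f$ can be written as a countable union $\bigcup_i U_i$ of such neighborhoods. On each $U_i$, Lusin's conditions $(N)$ and $(N^{-1})$ hold for $f|_{U_i}$, as explained in the paragraph preceding the lemma.

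For the direct image, I would decompose
\(
f(E) = f(E \cap B_f) \cup f(E \setminus B_f) \subset f(B_f) \cup \bigcup_i f(E \cap U_i).
\)
The set $f(B_f)$ has measure zero by condition (4) in Definition \ref{d:metric_qr_subRiem:mfd}. For each $i$, the set $E \cap U_i$ has measure zero, so Lusin's $(N)$ for $f|_{U_i}$ gives $|f(E \cap U_i)| = 0$. Countable subadditivity then yields $|f(E)| = 0$.

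For the preimage, I would decompose analogously:
\(
f^{-1}(E) = (f^{-1}(E) \cap B_f) \cup \bigcup_i (f^{-1}(E) \cap U_i).
\)
The first piece has measure zero since $|B_f| = 0$ by condition (4). For each $i$, $f^{-1}(E) \cap U_i = (f|_{U_i})^{-1}(E \cap f(U_i))$, and since $|E \cap f(U_i)| = 0$, Lusin's $(N^{-1})$ for the quasiconformal map $f|_{U_i}$ gives that this set has measure zero. Countable subadditivity finishes the argument. The main point to verify carefully is the reduction to a countable cover off the branch set, which comes for free from second countability of $M$; the remaining ingredients are essentially bookkeeping on top of the quasiconformal Lusin conditions cited from \cite{MargulisMostow95} and the built-in measure-zero hypotheses on $B_f$ and $f(B_f)$.
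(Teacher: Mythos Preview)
Your proof is correct and follows essentially the same route as the paper: both split off the branch set using condition (4), cover $M\setminus B_f$ by countably many sets on which $f$ is quasiconformal (you via second countability, the paper via the strongly Lindel\"of property), and invoke the Lusin conditions $(N)$ and $(N^{-1})$ from \cite{MargulisMostow95} on each piece.
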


\begin{proof}
By Proposition \ref{p:qr_loc_qc}, each point in $M\setminus B_f$ possesses a  neighbourhood restricted to which $f$ is quasiconformal. Since $M$ is strongly Lindel\"of, that is, every open cover of an arbitrary open subset of $M$ has a countable subcover, we may cover the open set $M\setminus B_f$ by a countable union of sets $\{B_i\}_{i\in\mathbb{N}}$ so that $f|_{B_i}:B_i\to f(B_i)$ is quasiconformal. By the absolute continuity of quasiconformal maps, we have
\begin{equation*}
|f(E\cap B_i)|= |(f|_{B_i})^{-1}(E)|=0,\quad\text{for all }i\in \mathbb{N}
\end{equation*}
and thus, by the assumptions on the branch set of a quasiregular map and its image,
\begin{equation*}
|f(E)|\leq |f(B_f)|+\sum_{i=1}^{\infty} |f(E\cap B_i)|
\end{equation*}
and
\begin{equation*}
|f^{-1}(E)|\leq |B_f|+ \sum_{i\in\mathbb{N}}|f^{-1}(E)\cap B_i|=|B_f|+ \sum_{i\in\mathbb{N}}|(f|_{B_i})^{-1}(E\cap f(B_i))|,
\end{equation*}
which concludes the proof.
\end{proof}

\subsection{Radially bilipschitz mappings}
\label{sec:QSP}

Although very natural,  Definition \ref{d:metric_qr_subRiem:mfd} is not easy to work with: it can be difficult to verify directly that a given map is quasiregular, and it is not clear in general whether the composition of two quasiregular maps is again quasiregular. For that reason we introduce a sub-class of quasiregular mappings that are easier to handle: the \emph{radially bilipschitz mappings} with $|B_f|=|f(B_f)|=0$. Examples of such mappings are those of bounded length distortion with finite multiplicity, as well as open, discrete and sense-preserving locally $L$-bilipschitz maps. Both the quasiregular and uniformly quasiregular mappings we construct below will be radially bilipschitz.

\begin{defi}
Let $M$ and $N$ be two sub-Riemannian manifolds endowed with Carnot-Carath\'{e}odory distances. We say that a branched cover $f:M\to N$ is \emph{radially bilipschitz} (RBL for short) if there exist constants $0<c_1<c_2<\infty$ and for each $x\in M$ a number $r(x)>0$ such that for all $y\in M$ with $d(x,y)\leq r(x)$, one has
\begin{equation}\label{eq:rbl}
c_1 d(x,y)\leq d(f(x),f(y))\leq  c_2 d(x,y).
\end{equation}
\end{defi}

We emphasize that the condition \eqref{eq:rbl} is formulated with respect to the base point $x$, in particular we do not require that the restriction of $f$ to any curve emanating from $x$, or even to a small neighborhood around $x$, would be bilipschitz.

\begin{example}
The prototypical example of RBL maps is the planar map $$f(z)=z^2/\norm{z}.$$
In polar coordinates, the map is given by $(r,\theta)\mapsto(r, 2\theta)$, so that away from the origin the map is locally 2-bilipschitz. At the origin, one sees branching but no dilatation: a circle of any radius is mapped onto itself.
\end{example}

Immediately from the definition of RBL maps, we obtain the following criterion for quasiregularity.

\begin{prop}\label{p:BLD_QR}
Any RBL map $f:M \to N$ with $|B_f|=0$ and $|f(B_f)|=0$ between sub-Riemannian manifolds is quasiregular.
\end{prop}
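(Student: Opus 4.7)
The key observation is that the RBL condition essentially packages the quasiregularity bounds directly into the definition: it controls both how much $f$ can stretch points at distance $r$ and how much it can contract them, using the \emph{same} uniform constants $c_1,c_2$ for every base point $x$, once $r$ is small enough relative to $r(x)$. So my plan is simply to read off each of the four conditions of Definition \ref{d:metric_qr_subRiem:mfd} from the hypotheses.

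First, I would dispatch conditions (1) and (4) immediately. Condition (1) is part of the definition of RBL, since every RBL map is assumed to be a branched cover. Condition (4) is precisely the hypothesis $|B_f|=|f(B_f)|=0$. This leaves only the metric distortion bounds (2) and (3), which I would handle simultaneously by establishing the pointwise bound $H(x,f)\leq c_2/c_1$ for every $x\in M$.

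To obtain this bound, fix $x\in M$ and let $0<r<r(x)$. For any $y$ with $d(x,y)\leq r\leq r(x)$, the RBL inequality \eqref{eq:rbl} gives $d(f(x),f(y))\leq c_2 d(x,y)\leq c_2 r$, hence
\(
\sup_{d(x,y)\leq r} d(f(x),f(y)) \;\leq\; c_2 r.
\)
Similarly, for any $y$ with $d(x,y)=r$ the lower bound in \eqref{eq:rbl} yields $d(f(x),f(y))\geq c_1 r$, hence
\(
\inf_{d(x,y)=r} d(f(x),f(y)) \;\geq\; c_1 r.
\)
(Sub-Riemannian manifolds are locally geodesic, so for all sufficiently small $r$ the sphere $S(x,r)$ is nonempty, guaranteeing this infimum is not vacuously infinite.) Dividing and taking $\limsup_{r\to 0}$ gives $H(x,f)\leq c_2/c_1$, uniformly in $x\in M$. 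This uniform pointwise bound trivially implies both the local boundedness condition (2) and the almost-everywhere bound (3), with $K=c_2/c_1$.

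I do not expect any real obstacle: the only mildly subtle point is that the radius $r(x)$ in the RBL definition is allowed to depend on $x$ without any continuity or uniformity assumption, but this is harmless here because $H(x,f)$ is defined by a limit as $r\to 0$, so for each individual $x$ it suffices to eventually have $r<r(x)$. No uniform choice of radius is needed, and we still obtain a uniform constant $K=c_2/c_1$ because the Lipschitz constants $c_1,c_2$ from \eqref{eq:rbl} themselves are uniform in $x$.
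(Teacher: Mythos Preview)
Your proposal is correct and matches the paper's approach exactly: the paper states that the proposition follows ``immediately from the definition of RBL maps'' and gives no further proof, and your argument is precisely the routine verification of conditions (1)--(4) that the paper leaves to the reader.
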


We state several useful properties of RBL maps.

\begin{prop}\label{p:qsp_comp}
Let $f:M \to M'$ and $g:M' \to M''$ be RBL maps between sub-Riemannian manifolds. Then $g\circ f: M \to M''$ is RBL.
\end{prop}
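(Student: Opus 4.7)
The plan is to verify the two requirements in the definition of an RBL map for $h:=g\circ f$, namely that $h$ is a branched cover and that it satisfies the radial bilipschitz estimate \eqref{eq:rbl} at every point, with uniform constants and an individually chosen radius.

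First I would dispose of the branched cover condition. Continuity, openness, and sense-preservation pass to compositions in an obvious way (the last because local degree is multiplicative under composition). Discreteness requires a one-line verification: given $p \in M''$ and $x \in h^{-1}(p)$, one uses discreteness of $g$ at $f(x)$ to pick a neighborhood $V$ of $f(x)$ with $V\cap g^{-1}(p)=\{f(x)\}$, then uses continuity of $f$ and discreteness of $f$ at $x$ to find a neighborhood $U$ of $x$ with $f(U)\subset V$ and $U\cap f^{-1}(f(x))=\{x\}$; on $U$ the preimage of $p$ under $h$ is then just $\{x\}$.

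Next, let $c_1^f,c_2^f$ and $c_1^g,c_2^g$ be the radial bilipschitz constants of $f$ and $g$, with associated positive radius functions $r_f$ on $M$ and $r_g$ on $M'$. Set
\(
c_1 := c_1^f c_1^g, \qquad c_2 := c_2^f c_2^g, \qquad r(x) := \min\!\left\{ r_f(x),\; \frac{r_g(f(x))}{c_2^f}\right\}>0.
\)
For any $y\in M$ with $d_M(x,y)\leq r(x)$, the bound $d_M(x,y)\leq r_f(x)$ gives $d_{M'}(f(x),f(y))\leq c_2^f\, d_M(x,y)\leq r_g(f(x))$, so the RBL estimate for $g$ applies at $f(x)$ to the pair $(f(x),f(y))$. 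Chaining the two inequalities then yields
\(
c_1\, d_M(x,y) \;\leq\; d_{M''}(h(x),h(y)) \;\leq\; c_2\, d_M(x,y),
\)
which is precisely \eqref{eq:rbl} for $h$ with the required uniform constants $c_1<c_2$.

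There is no real obstacle here: the bilipschitz bookkeeping is straightforward, and the only mildly delicate point is making sure that the radius $r(x)$ is chosen small enough so that the image $f(B(x,r(x)))$ actually lies within the neighborhood of $f(x)$ where $g$'s RBL inequality is available — which is exactly the role of the factor $1/c_2^f$ in the definition of $r(x)$.
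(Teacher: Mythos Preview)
Your proof is correct. The paper actually states this proposition without proof, treating it as immediate from the definitions; your argument spells out precisely the intended verification (composition of branched covers plus chaining of the radial bilipschitz estimates with $r(x)=\min\{r_f(x),\,r_g(f(x))/c_2^f\}$).
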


\begin{prop}\label{p:restr}
If $f:M\to N$ is RBL and $\Omega \subset M$ is a domain, then $f|_{\Omega}:\Omega \to N$ is RBL (with respect to the restriction of either the distance function or sub-Riemannian inner product on $M$).
\end{prop}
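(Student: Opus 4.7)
The plan is to verify, in order, that $f|_{\Omega}$ remains a branched cover and that the radial bilipschitz estimate \eqref{eq:rbl} persists after shrinking the local radius $r(x)$. The measurability of $B_f$ plays no role since the RBL definition only requires the map to be a branched cover, not any measure-zero branch condition.

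First I would check that $f|_{\Omega}$ is a branched cover onto its image. All four conditions—continuous, discrete, open, sense-preserving—are local properties. Continuity is obvious; discreteness follows since $(f|_\Omega)^{-1}(\{y\}) = f^{-1}(\{y\}) \cap \Omega$, which is discrete as an intersection of a discrete set with an open set; openness follows because any set open in $\Omega$ is open in $M$, so its image under $f$ is open; and sense-preservation, being a local degree condition, is inherited by the restriction to an open set.

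Next I would handle the RBL inequality with respect to the restricted distance $d_M|_{\Omega}$. Given $x \in \Omega$, fix $r'(x) = \min\{r(x), \tfrac{1}{2}d_M(x, M \setminus \Omega)\}$ (or just $r(x)$ if $\Omega = M$); this guarantees $B(x, r'(x)) \subseteq \Omega$. Any $y \in \Omega$ with $d_M(x,y) \leq r'(x)$ satisfies $d_M(x,y) \leq r(x)$, so \eqref{eq:rbl} applies verbatim with the same constants $c_1, c_2$.

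The slightly more delicate case is the intrinsic distance $d_{\Omega}$ induced by restricting the sub-Riemannian inner product to $\Omega$. In general $d_M(x,y) \leq d_{\Omega}(x,y)$, since any horizontal curve in $\Omega$ is also a horizontal curve in $M$. The key observation, already invoked in the proof of Proposition \ref{p:qr_loc_qc}, is that $M$ is locally geodesic: for each $x \in \Omega$ there is a radius $r''(x) \leq r'(x)$ such that any two points of $B(x, r''(x))$ can be joined by a cc-geodesic in $M$, and for $r''(x)$ small enough (by, say, making the ball of radius $2r''(x)$ relatively compact in $\Omega$) such geodesics remain inside $\Omega$. Consequently $d_M(x,y) = d_{\Omega}(x,y)$ for all $y \in B(x, r''(x))$, and \eqref{eq:rbl} transfers immediately from $d_M$ to $d_{\Omega}$ with the same constants $c_1, c_2$.

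The main—very mild—obstacle is the last step: making sure $d_{\Omega}$ coincides locally with $d_M$ inside $\Omega$. This is a standard consequence of the local existence of cc-geodesics (Hopf–Rinow) once one chooses the local radius small enough for geodesics between nearby points to lie in $\Omega$; no new machinery beyond what is invoked in \S\ref{sec:rectifiable} is required. Combining the three steps yields that $f|_{\Omega}$ is RBL with the \emph{same} global constants $c_1, c_2$ and local radius function $r''(x)$, which completes the proof.
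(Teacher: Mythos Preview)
The paper states this proposition without proof, so there is no argument to compare against. Your proof is correct and supplies exactly the routine verification the authors chose to omit: the branched-cover properties are local and hence inherited by restriction to an open set, the radial bilipschitz estimate with respect to $d_M|_\Omega$ persists after shrinking $r(x)$, and the intrinsic distance $d_\Omega$ agrees with $d_M$ on small enough balls because $M$ is locally geodesic (a fact the paper already invokes in the proof of Proposition~\ref{p:qr_loc_qc}).
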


\begin{prop}\label{p:qsp_glue}
Let $M$ and $N$ be sub-Riemannian manifolds of equal dimension. Assume that $M$ can be written as a finite union of open sets, $M=\bigcup_{i=1}^k B_i$ and suppose further that we are given RBL maps $f_i:B_i \to N$ so that ${f_i}|_{B_i \cap B_j} = {f_j}|_{B_i \cap B_j}$. Then the map $f:M\to N$, defined by $f|_{B_i}=f_i$ is RBL.
\end{prop}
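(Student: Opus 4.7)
The plan is to leverage the local nature of each condition defining the RBL property. The hypothesis ${f_i}|_{B_i\cap B_j}={f_j}|_{B_i\cap B_j}$ immediately ensures that $f$ is well-defined. Moreover, the branched-cover conditions---continuity, openness, discreteness, and sense-preservation---are all local and therefore pass directly from the $f_i$ to $f$, since $\{B_i\}$ is an open cover of $M$ and the restriction of $f$ to each $B_i$ is precisely the branched cover $f_i$. First I would record these observations; the remaining content of the proposition is the construction of uniform bilipschitz constants and a suitable radius function on $M$.

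For the radial bilipschitz estimate, let $0<c_1^{(i)}<c_2^{(i)}<\infty$ denote the constants witnessing that $f_i$ is RBL on $B_i$, and let $r_i:B_i\to(0,\infty)$ be the associated radius function. Since the cover is finite, setting $c_1=\min_i c_1^{(i)}>0$ and $c_2=\max_i c_2^{(i)}<\infty$ yields admissible uniform constants. Given $x\in M$, I would pick some index $i$ with $x\in B_i$; because $B_i$ is open in $M$ with respect to the CC-distance, there exists $\rho>0$ such that $B(x,\rho)\subset B_i$. Defining $r(x)=\min(\rho,r_i(x))$, any $y\in M$ with $d(x,y)\leq r(x)$ automatically lies in $B_i$, so $f(y)=f_i(y)$, and the RBL estimate for $f_i$ at $x$ gives
\(
c_1\,d(x,y)\leq c_1^{(i)}\,d(x,y)\leq d(f(x),f(y))\leq c_2^{(i)}\,d(x,y)\leq c_2\,d(x,y),
\)
which is the desired inequality.

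The main (and essentially only) technical wrinkle is that $f_i$ is a priori RBL between the sub-Riemannian manifolds $B_i$ and $N$, so its estimate involves the intrinsic CC-distance $d_{B_i}$ rather than the restriction of $d_M$, and in general $d_{B_i}\geq d_M|_{B_i}$. However, since $M$ is locally geodesic, the two distances coincide on sufficiently small metric balls centered at any point of $B_i$, so shrinking $r(x)$ further if needed---or appealing directly to Proposition \ref{p:restr}---eliminates the discrepancy. Apart from this metric-matching point, the proof is essentially bookkeeping over the finitely many indices $i$, and the finiteness of the cover is what guarantees that the infimum and supremum defining $c_1$ and $c_2$ remain a positive and a finite number.
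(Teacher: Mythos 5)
Your proposal is correct and follows essentially the same route as the paper: well-definedness from the compatibility condition, the topological properties of a branched cover inherited from the finite open cover, and the bilipschitz estimate obtained locally by shrinking the radius so that the ball around $x$ lies in a single $B_i$ and taking the min/max of the finitely many constants. You are in fact more careful than the paper on the metric side (the paper dismisses it as ``a local condition,'' while you address the $d_{B_i}$ versus $d_M|_{B_i}$ discrepancy via local geodesicity), and only slightly more cursory on sense-preservation, which the paper derives from openness and discreteness of $f$ together with the sense-preserving/sense-reversing dichotomy for discrete open maps on oriented generalized manifolds.
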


\begin{proof} The condition on $B_i \cap B_j$ ensures that $f$ is well-defined and continuous. By \cite[Remark 3.2]{MR1909607}, every discrete and open map between oriented generalized $n$-manifolds is sense-preserving or sense-reversing (so that if each $f_i$ is sense-preserving, so is $f$). Thus, regarding the topological properties, it is sufficient to verify openness and discreteness. For that purpose, let $U$ be an open set in $M$, then
\begin{equation*}
f(U)=f\left(\bigcup_{i=1}^k U \cap B_i\right)=\bigcup_{i=1}^k f_i(U\cap B_i)
\end{equation*}
is open as a union of open sets. The discreteness is also immediate since for all $y\in N$, we have
\begin{equation*}
f^{-1}(\{y\})=\bigcup_{i=1}^k f_i^{-1}(\{y\}),
\end{equation*}
which is discrete as a finite union of discrete sets. The metric condition in the definition of RBL is easy to verify, as it is a local condition and the sets $B_i$, on which $f_i$ is RBL, are open.
\end{proof}

\subsection{Examples of RBL mappings}

An important class of RBL maps are those of \emph{bounded length distortion} introduced in Definition \ref{d:BLD} below, which is analogous to \cite[Definition 0.1]{MR1909607}.

\begin{defi}\label{d:BLD}
Let $M$ and $N$ be two sub-Riemannian manifolds endowed with Carnot-Carath\'{e}odory distances. We say that a mapping $f:M\to N$ is of bounded length distortion (BLD) if it is continuous, open, discrete, sense-preserving and there exists $L\geq 1$ such that
\begin{equation*}
\frac{1}{L} \mathrm{length}(\gamma) \leq
 \mathrm{length}(f\circ \gamma)
 \leq L  \mathrm{length}(\gamma)
\end{equation*}
for all continuous paths $\gamma$ in $M$.
\end{defi}

\begin{prop}\label{tP:bld_qr}
Let $M$ and $N$ be oriented sub-Riemannian $n$-manifolds and assume that $M$ is complete.  Let $f:M\to N$ be a surjective $L$-BLD map of finite multiplicity. Then $f$ is RBL.
\end{prop}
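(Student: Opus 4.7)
The plan is to check the RBL inequality with constants $c_1=1/L$ and $c_2=L$, where the radius $r(x)$ is chosen in terms of how isolated $x$ is in its fibre $f^{-1}(f(x))$. The upper bound is immediate: completeness of $M$ and Hopf--Rinow give a geodesic $\gamma$ from $x$ to $y$ of length $d(x,y)$, and the BLD inequality yields
\(
d(f(x),f(y)) \le \mathrm{length}(f\circ\gamma)\le L\,d(x,y).
\)
So the real work is in the lower bound.

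The key ingredient I would invoke for the lower bound is a path-lifting property for BLD maps: if $\beta\colon[0,T]\to N$ is rectifiable with $\beta(0)=f(x_0)$, then there exists a continuous $\alpha\colon[0,T]\to M$ with $\alpha(0)=x_0$ and $f\circ\alpha=\beta$. I would establish this by a standard maximal-lift argument. Let $t^\ast$ be the supremum of $t\in[0,T]$ admitting a lift on $[0,t]$; gluing partial lifts produces a continuous $\alpha\colon[0,t^\ast)\to M$. The BLD bound $\mathrm{length}(\alpha|_{[0,t]})\le L\,\mathrm{length}(\beta|_{[0,t]})$ makes $\alpha$ uniformly rectifiable, so completeness of $M$ yields a limit $\alpha(t^\ast)\in f^{-1}(\beta(t^\ast))$; openness and discreteness of $f$ then permit continuation past $t^\ast$, ruling out $t^\ast<T$.

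With path-lifting in hand, I would fix $x\in M$. Since $f$ has finite multiplicity, $f^{-1}(f(x))$ is a finite set, so one can pick $r_0(x)>0$ with $B(x,r_0(x))\cap f^{-1}(f(x))=\{x\}$ and define $r(x):=r_0(x)/(1+L^2)$. Given $y$ with $d(x,y)\le r(x)$ and $\varepsilon>0$, I would take a curve $\beta\colon[0,T]\to N$ from $f(y)$ to $f(x)$ of length at most $d(f(x),f(y))+\varepsilon$ and lift it to $\alpha$ starting at $y$. Its endpoint $x':=\alpha(T)$ lies in $f^{-1}(f(x))$, and the upper bound together with the triangle inequality gives
\(
d(x,x')\le d(x,y)+L\,d(f(x),f(y))+L\varepsilon\le (1+L^2)\,d(x,y)+L\varepsilon,
\)
which for sufficiently small $\varepsilon$ is strictly less than $r_0(x)$, forcing $x'=x$. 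Then $d(x,y)\le\mathrm{length}(\alpha)\le L\,d(f(x),f(y))+L\varepsilon$, and letting $\varepsilon\to 0$ yields $d(f(x),f(y))\ge L^{-1}d(x,y)$, as required.

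The main obstacle is the path-lifting property itself, particularly continuation of the lift across points of the branch set $B_f$. Completeness of $M$ and the BLD length control are the crucial ingredients that make the Rickman-style argument for discrete open maps (compare the BLD analysis in \cite{MR1909607}) go through in the sub-Riemannian setting; surjectivity of $f$ is not needed in the lifting itself but ensures the statement is non-vacuous.
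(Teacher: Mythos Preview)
Your argument is correct (up to a harmless adjustment of constants: with $r(x)=r_0(x)/(1+L^2)$ and $d(x,y)=r(x)$ the bound $(1+L^2)d(x,y)+L\varepsilon$ is not \emph{strictly} below $r_0(x)$; take, e.g., $r(x)=r_0(x)/\bigl(2(1+L^2)\bigr)$ instead), but it proceeds quite differently from the paper's proof.

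The paper does not build a path--lifting argument. For the lower bound it invokes \cite[Proposition~4.13]{MR1909607} as a black box: for a surjective BLD map of finite multiplicity one has
\(
f^{-1}\bigl(B(f(x),\rho)\bigr)\subseteq B(x,C\rho)\cup\bigcup_i B(x_i,C\rho)
\)
for all $\rho>0$, with $C$ depending on the data (including the multiplicity). Choosing $\rho$ so that these balls are disjoint and taking $\rho=d(x,y)/(2C)$, one reads off $d(f(x),f(y))\ge d(x,y)/(2C)$. Thus the paper's RBL constant $c_1=1/(2C)$ depends on the multiplicity, whereas your path--lifting approach gives the sharp $c_1=1/L$, independent of multiplicity. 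In this sense your route is more elementary and yields a better conclusion; the price is that you must justify total path lifting across branch points, which is itself nontrivial (and, as you note, is essentially the content one would otherwise extract from \cite{MR1909607}). The paper's approach trades that work for a citation, at the cost of a weaker constant. Both proofs use finiteness of the fibre in the same way: to isolate $x$ from the other preimages of $f(x)$.
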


\begin{proof}
The $L$-Lipschitz continuity is immediate: Since $M$ is a geodesic space, for every $x,y\in M$, there exists $\gamma$ such that
\begin{equation*}
d_M(x,y)=\mathrm{length}(\gamma).
\end{equation*}
Then
\begin{equation*}
d_N(f(x),f(y))\leq \mathrm{length} (f \circ \gamma) \leq L \mathrm{length}( \gamma) = L d_M(x,y).
\end{equation*}

We now prove the local \textbf{lower} bound for $d_N(f(x),f(y))$ at an arbitrarily chosen point $x\in M$. Since $f$ has finite multiplicity, $f^{-1}\{f(x)\}$ consists of finitely many points $x,x_1,\ldots, x_N$. By \cite[Proposition 4.13]{MR1909607}, we have
\[\label{eq:incl_balls}
B(x,\tfrac{\rho}{C})\cup \bigcup_{i=1}^N B(x_i,\tfrac{\rho}{C})
&\subseteq f^{-1}(B(f(x),\rho))\\
&\subseteq
B(x,C\rho) \cup \bigcup_ {i=1}^N B(x_i,C\rho)\nonumber
\]
for all $\rho>0$ and some constant $C\geq 1$ which depends on the data associated to $M,N,f$ (including multiplicity).
There exists $\rho_1>0$ such that the balls $B(x,2C\rho)$ and $B(x_i,2C\rho)$ are pairwise disjoint for $\rho<\rho_1$. Set
\begin{equation*}
r:= 2 C \rho_1.
\end{equation*}
Now let $y$ be an arbitrary point in $M$ with $r':=d_M(x,y)<r$. If $x=y$, there is nothing to prove. Otherwise, we have
$y\in M \setminus  \left(B(x,C\rho)\cup \bigcup B(x_i,C\rho)\right)$ and $f(y)\in N \setminus B(f(x),\rho)$ for $\rho=r'/(2C)$ by \eqref{eq:incl_balls} and so
\begin{equation*}
d_N(f(x),f(y))> \rho = \frac{r'}{2C}=\frac{d_M(x,y)}{2C},
\end{equation*}
which concludes the proof.
\end{proof}

From Proposition \ref{p:BLD_QR} and Proposition \ref{tP:bld_qr}, we obtain the following result.

\begin{cor}\label{c:BLD_QR}
Let $M$ and $N$ be oriented sub-Riemannian $n$-manifolds  and assume that $M$ is complete. Let $f:M\to N$ be a  surjective BLD map of finite multiplicity with $|B_f|=|f(B_f)|=0$.
Then $f$ is quasiregular.
\end{cor}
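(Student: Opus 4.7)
The statement is essentially a direct consequence of chaining the two preceding propositions, so the plan is short.

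First, I would invoke Proposition \ref{tP:bld_qr}. Its hypotheses match exactly those of the corollary (oriented sub-Riemannian $n$-manifolds $M$ and $N$, $M$ complete, $f$ a surjective $L$-BLD map of finite multiplicity), so we immediately conclude that $f$ is radially bilipschitz. Note that neither the BLD condition nor the RBL conclusion of Proposition \ref{tP:bld_qr} makes reference to the branch set $B_f$, so the additional measure-zero assumptions in the corollary are not used at this stage.

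Next, I would apply Proposition \ref{p:BLD_QR}, which states that any RBL map between sub-Riemannian manifolds satisfying $|B_f|=|f(B_f)|=0$ is quasiregular. Since we now have the RBL property from the first step, and the measure-zero conditions on $B_f$ and $f(B_f)$ are given by hypothesis, the conclusion follows at once.

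There is no real obstacle to overcome; all the work has already been absorbed into Propositions \ref{p:BLD_QR} and \ref{tP:bld_qr}. The corollary simply packages those two results into a convenient statement directly verifiable from the BLD definition. (If one wanted to be pedantic, one could also note that the branched-cover portion of the definition of quasiregularity, i.e.\ continuous/discrete/open/sense-preserving, is built into the BLD definition, so the input hypotheses are indeed enough to feed Proposition \ref{p:BLD_QR}.)
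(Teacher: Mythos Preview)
Your proposal is correct and matches the paper's own proof, which simply says the result follows from Proposition \ref{p:BLD_QR} and Proposition \ref{tP:bld_qr}. You have merely spelled out in which order to apply them and why the hypotheses line up.
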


A second class of RBL mappings is provided by locally bilipschitz maps.

\begin{lemma}\label{l:horiz_lip}
Let $M$ and $N$ be compact sub-Riemannian manifolds and let $f:M\to N$ be a smooth map which is \emph{horizontal}, that is, $f_{\ast,p}(H_p M) \subseteq H_{f(p)}N$ for all $p\in M$, then $f$ is Lipschitz.
\end{lemma}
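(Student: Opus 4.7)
The plan is to combine compactness of $M$ with the horizontality hypothesis so that horizontal curves are mapped to horizontal curves of uniformly controlled length. First, since $f$ is smooth and sends $H_pM$ into $H_{f(p)}N$ at every point, the restriction $f_{\ast,p}|_{H_pM}$ is a linear map between the finite-dimensional inner product spaces $(H_pM,g_M)$ and $(H_{f(p)}N,g_N)$, and its operator norm depends continuously on $p$. Compactness of $M$ then yields a finite constant
\[
L:=\sup_{p\in M}\,\sup_{0\neq v\in H_pM}\frac{\norm{f_{\ast,p}v}_N}{\norm{v}_M}.
\]

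Next, fix $x,y$ in the same connected component of $M$ (if they lie in distinct components then $d_M(x,y)=\infty$ and there is nothing to prove). By the Chow--Rashevsky theorem, $x$ and $y$ can be joined by horizontal curves, so for any $\varepsilon>0$ there exists an absolutely continuous horizontal curve $\gamma:[a,b]\to M$ with $\gamma(a)=x$, $\gamma(b)=y$, and $\mathrm{length}(\gamma)\leq d_M(x,y)+\varepsilon$. The composition $f\circ\gamma$ is absolutely continuous, and by the chain rule its derivative equals $f_{\ast,\gamma(t)}\dot\gamma(t)$ at almost every $t$; horizontality of $f$ forces this vector into $H_{f(\gamma(t))}N$, so $f\circ\gamma$ is itself horizontal. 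Computing its length as recalled in the subsection on rectifiable curves,
\[
\mathrm{length}(f\circ\gamma)=\int_a^b \norm{f_{\ast,\gamma(t)}\dot\gamma(t)}_N\,dt \leq L\int_a^b \norm{\dot\gamma(t)}_M\,dt = L\,\mathrm{length}(\gamma).
\]
Hence $d_N(f(x),f(y))\leq \mathrm{length}(f\circ\gamma)\leq L\bigl(d_M(x,y)+\varepsilon\bigr)$, and letting $\varepsilon\to 0$ gives the Lipschitz bound with constant $L$.

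The argument is essentially routine and I do not anticipate a serious obstacle. The only step that merits a moment's care is passing from the pointwise horizontality of $f$ to horizontality of $f\circ\gamma$ in the measure-theoretic sense needed to compute its length via the sub-Riemannian norm on $N$; this is immediate from the chain rule since $f$ is smooth and $\gamma$ may be taken Lipschitz. Compactness is used solely to upgrade pointwise operator norm bounds into a single uniform constant $L$, and it is precisely this step that would fail without the compactness hypothesis.
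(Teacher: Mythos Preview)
Your proof is correct and follows essentially the same approach as the paper: both use compactness of (the unit horizontal bundle of) $M$ together with continuity to obtain a uniform bound $\norm{f_{\ast}v}_N\leq L\norm{v}_M$ on horizontal vectors, and then deduce the Lipschitz estimate from horizontality of $f$. The paper simply writes ``the claim follows since $f$ is horizontal'' for the second step, whereas you spell out the curve argument explicitly.
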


\begin{proof}
Since $f$ is smooth, we can consider the pull-back metric $f^{\ast} g_N$. This is a semidefinite form (on the subbundle $HM$). As $M$ is compact, the same holds true for the unit horizontal subbundle and the map $(p,v)\mapsto \frac{(f^{\ast}g_N)_p(v,v)}{g_{M,p}(v,v)}$ defined thereon is continuous and finite. Thus there exists a constant $0\leq L <\infty$ such that
\begin{equation}
 \norm{f_{\ast} v}_N\leq L \norm{v}_M\quad\text{for all }v\in HM.
\end{equation}
The claim follows since $f$ is horizontal.
\end{proof}

\begin{cor}\label{c:contacto_bld}
Let $M$ be a compact sub-Riemannian manifold whose sub-Rie{\-}mannian structure is given by a contact form. Then every sense-preserving contactomorphism $f:M \to M$ is BLD, RBL and quasiconformal.
\end{cor}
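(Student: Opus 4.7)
The plan is to observe that a contactomorphism is precisely a smooth diffeomorphism $f:M \to M$ satisfying $f_{\ast,p}(H_pM) = H_{f(p)}M$ for every $p\in M$, so that both $f$ and its inverse $f^{-1}$ are smooth horizontal maps. Since $M$ is compact, Lemma \ref{l:horiz_lip} then applies to each of them, producing a constant $L \geq 1$ such that
\[
\norm{f_\ast v}_M \leq L \norm{v}_M \quad \text{and} \quad \norm{f^{-1}_\ast w}_M \leq L\norm{w}_M
\]
for all horizontal vectors $v, w \in HM$. In particular, $f$ is bilipschitz with respect to the Carnot--Carath\'eodory distance, which already yields the RBL property with $c_1 = 1/L$ and $c_2 = L$ (and $r(x)$ arbitrary).

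Next, I would verify the BLD condition. For any absolutely continuous curve $\gamma$ in $M$, horizontality of $f$ (together with horizontality of $f^{-1}$) implies that $\gamma$ is horizontal if and only if $f\circ \gamma$ is horizontal. When both are non-horizontal, the lengths are infinite and the BLD inequality holds trivially. When both are horizontal, the pointwise bounds on $f_\ast$ and $f^{-1}_\ast$ combined with the length formula $\mathrm{length}(\gamma) = \int \norm{\dot\gamma(t)}_M\,dt$ from \S\ref{sec:rectifiable} give
\(
\tfrac{1}{L}\,\mathrm{length}(\gamma) \leq \mathrm{length}(f\circ \gamma) \leq L\,\mathrm{length}(\gamma).
\)
For a general continuous $\gamma$, taking the supremum over partitions and using the bilipschitz bound on $d$ yields the same two-sided estimate. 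Combined with the fact that a sense-preserving diffeomorphism is automatically continuous, open, and discrete, this shows $f$ is $L$-BLD.

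Finally, I would invoke Corollary \ref{c:BLD_QR}: $M$ is complete (being compact), $f$ is a surjective BLD map of multiplicity one, and its branch set $B_f$ is empty (being a diffeomorphism), so $|B_f| = |f(B_f)| = 0$ trivially. Hence $f$ is quasiregular, and since it is also a homeomorphism, quasiconformal.

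No step is a genuine obstacle; the only mild subtlety is making sure the BLD inequality is verified for \emph{all} continuous paths rather than just horizontal ones, which is handled by noting that the contact condition preserves horizontality (so non-horizontal curves map to non-horizontal curves with both lengths equal to $+\infty$).
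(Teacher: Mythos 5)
Your proposal is correct and follows essentially the same route as the paper: the paper's proof also applies Lemma \ref{l:horiz_lip} to both $f$ and $f^{-1}$ to conclude that $f$ is bilipschitz, and then deduces the BLD, RBL and quasiconformality properties from the controlled length distortion. Your write-up simply fills in the details (the horizontality-preservation for curves and the reduction to Corollary \ref{c:BLD_QR}) that the paper leaves implicit.
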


\begin{proof}
We apply Lemma \ref{l:horiz_lip} to both $f$ and its inverse in order to see that $f$ is bilipschitz. It follows that $f$ is  distorting the length of curves only by a controlled amount. Thus, $f$ is BLD, RBL and quasiconformal.
\end{proof}

\subsection{Remarks on QR mappings}

There is a well-established \textbf{analytic}  definition available for quasiregularity on the \textbf{Heisenberg group} and a rather complete theory has been developed for mappings which are quasiregular according to this definition \cite{MR1778673}. A continuous mapping between equiregular sub-Riemannian manifolds which are modelled conformally and locally bilipschitzly on $\mathbb{H}^n$ (`Heisenberg manifolds') could now be called $K$-quasiregular if it is $K$-quasiregular in charts according to the analytic definition. Similarly, one could study quasiregular mappings on manifolds which are modelled on other two-step Carnot groups, based on the notions and results in \cite{MR1847514}.

\begin{question}
Does this definition of `quasiregularity in charts' on Heisenberg manifolds yield the same class of mappings as Definition \ref{d:metric_qr_subRiem:mfd}?
\end{question}


Mappings on the Heisenberg group which are quasiregular according to the analytic definition stated in \cite{MR1778673} behave analogously as their Euclidean counterparts: non-constant quasiregular mappings satisfy Lusin's condition and their branch set is of vanishing measure, it is also known that the composition of two such mappings is again quasiregular. If we start with a metric distortion condition on a sub-Riemannian manifold, our knowledge is less advanced.

\begin{question}
\label{q:QRanalytic}
What are geometric or analytic characterizations of mappings which are quasiregular in the sense of Definition \ref{d:metric_qr_subRiem:mfd}?
\end{question}

An answer to Question \ref{q:QRanalytic} would most likely yield that quasiregular mappings are closed under compositions, and provide us with other useful properties.

As we show in \S \ref{sec:QR_lens}, nontrivial quasiregular mappings  exist between any two lens spaces (see Theorem \ref{thm:multitwistQR}). On the other hand, for certain Carnot groups any quasiregular mapping is in fact conformal \cite{MR1630785}. More generally, one can ask:

\begin{question}
For which sub-Riemannian manifolds $M, N$ does there exist a non-constant quasiregular mapping $f: M \rightarrow N$? Does a cohomological obstruction exist  for  non-constant QR mappings $f:G\rightarrow N$, where $G$ is a Carnot group, as for (Riemannian) QR-elliptic manifolds in \cite{MR1846030}?
\end{question}

If a compact Riemannian manifold $M^n$ supports a nontrivial UQR map, then $M$ is elliptic \cite{IM2} (that is, admits a quasiregular mapping from $\R^n$). A detailed proof based on applying Zalcman's rescaling principle on the Julia set can be found in \cite{Ka}. The corresponding statement in the sub-Riemannian setting is an open question.
\begin{question}
Suppose $M$ is an equiregular sub-Riemannian manifold that admits a nontrivial UQR map and that has at almost every point a fixed Carnot group $G$ as a tangent space in the sense of Margulis and Mostow. Does $M$ then admit a quasiregular mapping from $G$?
\end{question}

\section{QR mappings on spheres and lens spaces}\label{sec:QR_lens}

In this section we introduce the setting of sub-Riemannian spheres and lens spaces and provide nontrivial examples for quasiregular mappings on such manifolds. For Riemannian lens spaces, the existence of nontrivial UQR maps follows from \cite{MR1718708} and various properties of such mappings have been studied. For instance, G.~Martin and the third author have shown that the $2$-torus cannot appear as a Julia set either on the standard $\Sp^3$ or on a three-dimensional lens space equipped with the quotient metric \cite{Martin-Peltonen}.
In the present paper, we initiate the study of quasiregular mappings on \textbf{sub}-Riemannian lens spaces. The UQR maps constructed here provide also new Riemannian counterparts. All these mappings have Cantor set type Julia sets.

Recall that a differentiable map $f: M \rightarrow N$ between sub-Riemannian manifolds of the same dimension is locally isometric (in the metric sense) if one has $f_*(HM)\subset HN$ and also $f^* g_N = g_M$. More generally, the mapping $f$ is said to be \emph{conformal} if one has $f_*(HM)\subset HN$  and $f^* g_N = \lambda g_M$, that is, there exists a positive function $\lambda:M\to \R$ such that
 for all $p\in M$ and $v,w\in H_p M$ one has $g_{N}(f_{\ast}v,f_{\ast}w)= \lambda(p) g_{M}(v,w)$.

Notice\footnote{Jeremy: Already true for conformal maps in $\R^n$ for $n\geq 3$. K: not sure I understand this comment. The idea was just to state the implication ``conformal diffeo $\Rightarrow$ $1$-QC'', which holds on all (sub)-Riemannian manifolds, in particular on $\mathbb{R}^n$, $n\geq 1$, no talk about Liouville-type rigidity, but we could of course mention this} that a conformal diffeomorphism is $1$-quasiconformal with respect to the Carnot-Carath\'{e}odory distances defined by the corresponding sub-Riemannian metrics. The assumed smoothness might seem a too strong regularity condition. It is, however, natural, for instance in the context of rigidity results for conformal mappings on Carnot groups \cite{MR2272973} and for isometries on equiregular sub-Riemannian manifolds \cite{uCLD13}. Later in the paper, we consider mappings for which the conformality condition above is formulated in terms of the so-called Margulis--Mostow differential and required to hold only almost everywhere.

\subsection{Sub-Riemannian spheres and lens spaces}
The \emph{sphere} $\Sp^{2n+1}\subset \C^{n+1}$ has a natural sub-Riemannian structure. Namely, one obtains the horizontal subbundle by taking a maximal complex subspace of $T\Sp^{2n+1}$: $H\Sp^{2n+1} = T\Sp^{2n+1}\cap \ii T\Sp^{2n+1}$. The Euclidean inner product on $\C^{n+1}$ then restricts to $H\Sp^{2n+1}$ as a sub-Riemannian metric $g_E$ and corresponding norm $\norm{\cdot}_E$.

We will next take a quotient of $\mathbb{S}^{2n+1} \subset \C^{n+1}$. Let $p>1$ be an integer and $q_1, \ldots, q_{n+1}\in \N$ relatively prime to $p$. Set $\q = (q_1, \ldots, q_{n+1})$, and define
\[R_{p, \q}(z_1,z_2,\ldots,z_{n+1}) = (e^{2\pi \ii q_1 / p}z_1, e^{2\pi \ii q_2/ p}z_2, \ldots, e^{2\pi \ii q_{n+1}/p}z_{n+1}).
\]

Note that the transformation $R_{p, \q}$ preserves the unit sphere $\Sp^{2n+1}$, and that its restriction to $\Sp^{2n+1}$ is conformal with respect to the standard sub-Riemannian structure. Furthermore, it has finite order $p$ and no fixed points on $\Sp^{2n+1}$. The associated quotient space $L_{p, \q}:=\Sp^{2n+1}/\langle R_{p, \q}\rangle$ is called a \emph{lens space}.
Lens spaces are well-studied, especially in dimension 3, see \cite{MR1867354}.

We can give $L_{p,\q}$ a natural sub-Riemannian structure using the following construction.

\begin{prop}\label{p:quotient}
Let $M$ be an equiregular (simply connected) sub-Riemannian manifold and $\Gamma$ a discrete group of isometries of $M$, acting freely and properly discontinuously on $M$.

Let $\pi:M\to M/\Gamma$ be the usual
quotient
map. Then the following holds:
\begin{enumerate}
\item $M/\Gamma$ can be given the structure of a sub-Riemannian manifold which can be endowed with a natural metric so that $\pi$ becomes a locally isometric covering.
\item Every $K$-quasiregular map $F: M \rightarrow M$ with
\begin{equation}\label{eq:cond_well_def}
\pi(p)=\pi(q)\quad \Rightarrow \quad F(p)=F(q),
\end{equation}
induces a $K$-quasiregular map $f:M/\Gamma \to M$, defined by $f([p]):= F(p)$.
\end{enumerate}
\end{prop}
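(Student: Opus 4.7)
The plan splits naturally along the two assertions.

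For (1), my first step is to note that since $\Gamma$ acts freely and properly discontinuously, the quotient $M/\Gamma$ carries a unique smooth structure making $\pi$ a smooth covering. Because $\Gamma$ acts by sub-Riemannian isometries, it preserves both the horizontal distribution $HM$ and the inner product $g$; hence I can define $H_{[p]}(M/\Gamma) := \pi_{\ast,p}(H_pM)$ and transport $g_p$ through $\pi_{\ast,p}$ to an inner product on $H_{[p]}(M/\Gamma)$. The construction is independent of the choice of preimage because for $\gamma\in\Gamma$ one has $\pi\circ\gamma=\pi$ and $\gamma_\ast$ sends $(H_pM,g_p)$ isometrically onto $(H_{\gamma p}M,g_{\gamma p})$. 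The H\"{o}rmander condition and equiregularity are local and transfer automatically. With this structure $\pi$ is a local sub-Riemannian isometry, and comparing Carnot-Carath\'{e}odory lengths of horizontal curves shows that $\pi$ is a local isometry of the associated cc-distances.

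For (2), the hypothesis $\pi(p)=\pi(q)\Rightarrow F(p)=F(q)$ is precisely the condition for $F$ to factor as $F=f\circ\pi$ for a unique continuous $f:M/\Gamma\to M$. I then verify the four conditions of Definition \ref{d:metric_qr_subRiem:mfd}. Choosing an evenly covered neighborhood $V$ of $[p]$ with local section $\sigma:V\to M$ of $\pi$, I have $f=F\circ\sigma$ on $V$. Openness, discreteness, and sense-preservation of $f$ then follow from the corresponding properties of $F$ together with the fact that $\sigma$ is a diffeomorphism (for discreteness one also uses that $\pi(F^{-1}\{y\})$ cannot accumulate in $M/\Gamma$ by proper discontinuity). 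For the distortion, shrink $V$ so that $\pi|_{\sigma(V)}$ is a genuine isometry of balls; then $H([p],f)=H(p,F)$, and the bounds $H(\cdot,f)\leq K$ almost everywhere and local boundedness transfer from $F$. Finally $B_f=\pi(B_F)$ and $f(B_f)=F(B_F)$, and since $\pi$ is locally measure-preserving (as a local isometry for the $Q$-dimensional Hausdorff measure) both sets have measure zero.

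The verifications are mostly bookkeeping; the only point I would watch carefully is the interplay between the metric distortion $H(x,f)$, defined via balls, and the local-isometry nature of $\pi$. One needs that for sufficiently small $r$ the balls $B([p],r)$ and $B(p,r)$ are actually isometric via $\sigma$ (not merely that the distances of nearby points agree), so that the suprema and infima defining $H$ match on the two sides. This is immediate from the covering property combined with local isometry, but it is the one step that goes beyond a one-line reduction. I do not expect any conceptual obstacle beyond this; the proposition is essentially the statement that QR-ness, defined entirely metrically and locally, is stable under passage to a quotient by a free, properly discontinuous group of isometries.
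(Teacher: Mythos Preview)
Your proposal is correct and follows the same approach as the paper: push the sub-Riemannian structure down along $\pi$ so that $\pi$ becomes a local isometry, then use that local isometry to transfer the distortion bound $H([p],f)=H(p,F)$ and the other conditions of Definition~\ref{d:metric_qr_subRiem:mfd}. The paper's own proof is considerably terser---it simply records the pushforward metric, notes $d_{M/\Gamma}([p],[q])=d_M(p,q)$ for nearby points in the same fundamental region, and declares that $K$-quasiregularity follows---whereas you explicitly verify openness, discreteness, orientation, and the branch-set conditions via the local section $\sigma$; but the underlying idea is identical.
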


\begin{proof}
Under the given assumptions it is possible to endow $M/\Gamma$ with a sub-Riemannian metric $g$ by setting
\begin{equation*}
g_{[p]}(v_{\ast},v'_{\ast}):= g_p(v,v'),\quad \text{for }p\in {M},\;v,v'\in H_p{M}
\end{equation*}
 with $\pi(p)=[p]$, $\pi_{\ast}v=v_{\ast}$, $\pi_{\ast}v'=v'_{\ast}$,
where $\pi:M\to {M}/\Gamma $ denotes the standard projection given by the quotient map $p\mapsto [p]$, which in this case becomes a locally isometric covering.

Let now $F: M \rightarrow M$ be a $K$-quasiregular map satisfying condition \eqref{eq:cond_well_def}. Then $f:M/\Gamma \to M$ is well-defined by setting $f([p]):= F(p)$. For $r>0$ small enough and $d_{M/\Gamma}([p],[q])\leq r$, we have
\begin{equation*}
d_{M/\Gamma}([p],[q])= d_M(p,q)
\end{equation*}
for $p$ and $q$ in the same fundamental region. Then the $K$-quasiregularity of $f$ follows from the corresponding property of $F$.
\end{proof}

\begin{example}\label{ex:rot}
Let $p>1$ be an integer, and $\q=(q_1, \ldots, q_{n+1})$ with each $q_i$ relatively prime to $p$. Let $R_{p,\q}$  be the associated rotation of $\Sp^{2n+1}$ and $L_{p, \q}$ be the associated lens space. For any $\theta_1, \ldots, \theta_{n+1}\in \R$, the rotation
\[\label{eq:rotation}
R_{\theta_1, \ldots, \theta_{n+1}}(z_1, \ldots, z_{n+1}) = (e^{\ii \theta_1}z_1, \ldots, e^{\ii \theta_{n+1}}z_{n+1})\]
of $\C^{n+1}$ induces a rotation of $\Sp^{2n+1}$ that commutes with $R_{p,\q}$ and is conformal with respect to the standard metric on $\Sp^{2n+1}$ (it is, in fact, an isometry). We thus have an induced ``rotation'' isometry on the lens space $L_{p,\q}$.

The sphere $\Sp^{2n+1}$ admits a larger family of conformal ``rotations''. Namely, the group $U(n+1)$ of unitary matrices acts transitively on $\Sp^{2n+1}$ by isometries. However, this action does not descend to $L_{p,\q}$ because it does not commute with $R_{p,\q}$.
\end{example}

\begin{example}
\label{ex:loxodromic}
The sphere $\Sp^{2n+1}\subset \C^{n+1}$ admits a one-parameter family of ``loxodromic'' conformal maps defined, for $-\infty< d< \infty$, by
\(
T_d(z_1, \ldots, z_{n+1}) = \frac{\left(\cosh(d) z_1+ \sinh(d), z_2, z_3, \ldots, z_{n+1}\right)}{\sinh(d)z_1+\cosh(d)}.
\)
For $d>0$, the transformation $T_d$ maps the ``right hemisphere'' $\Re (z_1)>0$ to a smaller neighborhood of the point $(1,0,\ldots,0)$. Because $T_d$ is complex-analytic (as a map on $\C^{n+1}$), it preserves the horizontal distribution of $\Sp^{2n+1}$. Because it is linear-fractional, $T_d$ is, in fact, conformal with respect to the sub-Riemannian metric. See \cite{KR1} for the case $n=1$.

We remark that \textbf{inside} the unit ball, the transformation $T_d$ is an isometry of complex hyperbolic space with translation length $d$ (see \cite[\S 6.2]{MR1695450} or \cite{MR2987619}). Furthermore, under stereographic projection, $T_d$ corresponds to a dilation map $\delta_r$ of the Heisenberg group.
\end{example}

\subsection{Multi-twist mappings}
We define a multi-twist map of $\mathbb{S}^{2n+1}$ as follows:

\begin{defi}
For $a\in \mathbb{Z}$, the \emph{multi-twist  map} $F_a:\mathbb{S}^{2n+1} \to \mathbb{S}^{2n+1}$ is given by
\begin{equation}\label{eq:multi_twist}
F_a(r_1 e^{\ii\theta_1},\ldots,r_{n+1} e^{\ii\theta_{n+1}})=(r_1 e^{a \ii \theta_1},\ldots, r_{n+1} e^{\ii a \theta_{n+1}}).
\end{equation}
\end{defi}

The goal of this section is to prove (see Corollary \ref{c:twist_qr} and Lemma \ref{l:lens}):
\begin{thm}
\label{thm:multitwistQR}
The mapping $F_a: \Sp^{2n+1}\rightarrow \Sp^{2n+1}$ is quasiregular for each $a$. For each $p$ dividing $a$ and any choice of vector $\q$ of integers relatively prime to $p$, the map $F_a$ induces quasiregular mappings $f_a: L_{p,\q} \rightarrow \Sp^{2n+1}$ and $\pi \circ f_a: L_{p,\q}\rightarrow L_{p,\q}$.
\end{thm}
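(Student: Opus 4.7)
The plan is to establish $K$-quasiregularity of $F_a$ on $\Sp^{2n+1}$ via Proposition~\ref{p:smooth_K_qr}, with $K = |a|$, and then descend to the lens space using the quotient argument of Proposition~\ref{p:quotient}. The structural fact driving everything is that $F_a$ acts coordinate-wise as $g_a(re^{\ii\theta}) = re^{\ii a\theta}$: it preserves every polyradius $(r_1,\ldots,r_{n+1})$ and multiplies every angle by $a$.

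For the first part I would identify the branch set $B_{F_a}$ as the union over $j$ of the sub-spheres $\{z_j = 0\}\cap \Sp^{2n+1}$, a finite union of smooth submanifolds of lower topological dimension; hence $|B_{F_a}|=0$, and since $F_a$ maps this set into itself, $|F_a(B_{F_a})|=0$ as well. On the open complement, $F_a$ is a smooth local diffeomorphism since each coordinate map $g_a$ is smooth on $\C\setminus\{0\}$. Horizontality follows from a pullback computation with the standard contact form $\alpha = \sum r_j^2\, d\theta_j$, whose kernel is $H\Sp^{2n+1}$: since $F_a$ fixes each $r_j$ and scales each $d\theta_j$ by $a$, one has $F_a^{\ast}\alpha = a\alpha$, so $(F_a)_{\ast}$ preserves $\ker\alpha$. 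For the dilatation bound I would split the horizontal space at a smooth point $z$ into the Euclidean-orthogonal sum of a \emph{radial} part spanned by sphere-tangent combinations $\sum a_j\partial_{r_j}$ (with $\sum a_j r_j = 0$) and an \emph{angular-horizontal} part consisting of combinations $\sum b_j \partial_{\theta_j}$ with $\sum b_j r_j^2 = 0$, and then observe that $(F_a)_{\ast}$ acts as the identity on the first summand and as multiplication by $a$ on the second. This gives $\lambda_-(z)\equiv 1$ and $\lambda_+(z)\equiv |a|$ uniformly on $\Sp^{2n+1}\setminus B_{F_a}$, and Proposition~\ref{p:smooth_K_qr} yields the $|a|$-quasiregularity of $F_a$.

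For the descent, condition~\eqref{eq:cond_well_def} is precisely what the hypothesis $p\mid a$ ensures: if $\pi(z)=\pi(w)$ then $w = R_{p,\q}^k(z)$ for some $k$, and each coordinate of $F_a(w)$ differs from that of $F_a(z)$ by a factor $e^{2\pi\ii k a q_j/p} = 1$. Therefore $f_a([z]) := F_a(z)$ is well-defined; because $\pi$ is a local isometry, the identity $F_a = f_a\circ\pi$ transfers the pointwise distortion of $F_a$ unchanged to $f_a$, exactly as in the proof of Proposition~\ref{p:quotient}(2), giving $|a|$-quasiregularity of $f_a: L_{p,\q}\to \Sp^{2n+1}$. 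The same local-isometry argument applied to the post-composition with $\pi$ shows that $\pi\circ f_a: L_{p,\q}\to L_{p,\q}$ is $|a|$-quasiregular as well. The main obstacle I expect is the linear-algebra bookkeeping around the orthogonal radial/angular-horizontal decomposition and the verification that $(F_a)_{\ast}$ acts diagonally in it; once that is in place, the branch-set properties, the contact-form identity, and the descent are essentially formal consequences of already-proven general principles.
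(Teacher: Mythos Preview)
Your overall strategy matches the paper's: horizontality via $F_a^{\ast}\alpha = a\alpha$, the dilatation bounds $1\leq \norm{(F_a)_{\ast}v}/\norm{v}\leq |a|$ on $H\Sp^{2n+1}$ away from the branch set (your radial/angular-horizontal decomposition is the polar-coordinate version of the Cartesian computation \eqref{eq:metric_twist}), then Proposition~\ref{p:smooth_K_qr} for the constant $|a|$, and Proposition~\ref{p:quotient} together with the local isometry of $\pi$ for the descent to $L_{p,\q}$.

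There is, however, a genuine gap in your invocation of Proposition~\ref{p:smooth_K_qr}: that proposition has quasiregularity as a \emph{hypothesis} and only sharpens the distortion constant. Your $\lambda_{\pm}$ analysis controls $H(x,F_a)$ only where $F_a$ is a local diffeomorphism; it does not by itself yield condition~(2) of Definition~\ref{d:metric_qr_subRiem:mfd} (local boundedness of $H(\cdot,F_a)$ on all of $\Sp^{2n+1}$, branch points included), nor the branched-cover axioms. The map $F_a$ is not differentiable on $B_{F_a}$, and a geodesic can lie entirely inside a sub-sphere $\{z_j=0\}\subset B_{F_a}$, so one cannot simply integrate the pointwise bound $\lambda_{+}\leq |a|$ along an arbitrary geodesic. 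The paper closes this gap by proving that $F_a$ is BLD (Corollary~\ref{c:twist_curves}): for \emph{every} curve $\gamma$, including those meeting the branch set, one has $\mathrm{length}(\gamma)\leq \mathrm{length}(F_a\circ\gamma)\leq |a|\cdot\mathrm{length}(\gamma)$. This is the content of Proposition~\ref{p:twist_horiz_curve_pres}, whose proof requires a separate differentiability argument (Lemma~\ref{l:diff_at_zero}) at parameter values where a coordinate of $\gamma$ vanishes. Once BLD is established, RBL and hence quasiregularity follow from Propositions~\ref{tP:bld_qr} and~\ref{p:BLD_QR}, and only then does Proposition~\ref{p:smooth_K_qr} legitimately deliver $K=|a|$. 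Your descent to $L_{p,\q}$ is fine and matches Lemma~\ref{l:lens}.
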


To describe the properties of $F_a$, it is convenient to work on the domain
\begin{equation*}
U:=\{z=(z_1,\ldots,z_{n+1})\in\mathbb{C}^{n+1}:\; |z_i|\neq 0\quad\text{for }i=1,\ldots,n+1\},
\end{equation*}
in polar coordinates $r_1,\ldots,r_{n+1},\theta_1,\ldots,\theta_{n+1}$. Clearly, $F_a$ is everywhere differentiable on $U$ and for all $z\in U$,
\begin{equation}\label{eq:contact_twist}
(F_a)_{\ast,z}(H_z \mathbb{S}^{2n+1})\subseteq H_{F_a(z)}\mathbb{S}^{2n+1}.
\end{equation}
This can be seen most easily from the following characterization of the horizontal distribution:

\begin{lemma}The horizontal bundle $H\mathbb{S}^{2n+1}$ is given by
\begin{equation*}
H\mathbb{S}^{2n+1}= \mathrm{ker}\alpha\quad\text{with }\alpha= \frac{\ii}{2}\left(\sum_{i=1}^{n+1} z_i d\bar z_i - \bar z_i d z_i\right)=\sum_{i=1}^{n+1}r_i^2 d\theta_i.
\end{equation*}
\begin{proof}
One can write $H\mathbb{S}^{2n+1}= \ker dr \circ J$, where $r(z_1,\ldots,z_{n+1})= (\sum_{i=1}^{n+1} z_i \bar z_i)^{1/2}$ and the mapping $J$ is the restriction to the sphere of the multiplication by $\ii$.  Since $d(r^2)=2r dr$ and $r\equiv 1$ on $\mathbb{S}^{2n+1}$, we find $dr = \frac{1}{2}\sum_{i=1}^{n+1} z_i d\bar z_i+\bar z_i dz_i$, and the expression of $\alpha$ in Cartesian coordinates follows. The expression in polar coordinates is obtained by noting that $z_i= r_i e^{\ii\theta_i}$.
\end{proof}
\end{lemma}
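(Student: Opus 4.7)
My plan is to directly identify the horizontal bundle via the complex-structure description and then compute the defining form.

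First, I would recall that $\mathbb{S}^{2n+1}$ is the level set $\{r=1\}$ where $r(z) = (\sum z_i \bar z_i)^{1/2}$, so the tangent bundle is $T\mathbb{S}^{2n+1} = \ker dr$ (restricted to the sphere). By definition, $H_z\mathbb{S}^{2n+1} = T_z\mathbb{S}^{2n+1} \cap \ii T_z\mathbb{S}^{2n+1}$, so a tangent vector $v \in T_z\mathbb{S}^{2n+1}$ lies in $H_z\mathbb{S}^{2n+1}$ precisely when $Jv := \ii v$ is also tangent to the sphere, i.e.\ $dr(Jv)=0$. Hence, on the sphere, $H\mathbb{S}^{2n+1} = \ker (dr \circ J)$, and it suffices to show $dr \circ J$ and $\alpha$ have the same kernel (indeed, I will show they differ only by a sign).

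Next, I would compute $dr$ by differentiating $r^2 = \sum_i z_i \bar z_i$: this gives $2r\, dr = \sum_i (z_i\, d\bar z_i + \bar z_i\, dz_i)$, so along the sphere
\(
dr = \tfrac{1}{2} \sum_{i=1}^{n+1} \bigl( z_i\, d\bar z_i + \bar z_i\, dz_i \bigr).
\)
Then I would use $J\partial_{z_i} = \ii\,\partial_{z_i}$ and $J\partial_{\bar z_i} = -\ii\,\partial_{\bar z_i}$, equivalently $dz_i \circ J = \ii\, dz_i$ and $d\bar z_i \circ J = -\ii\, d\bar z_i$, to obtain
\(
dr \circ J = \tfrac{1}{2} \sum_i \bigl( -\ii z_i\, d\bar z_i + \ii \bar z_i\, dz_i \bigr) = -\alpha,
\)
so $\ker \alpha = \ker (dr \circ J) = H\mathbb{S}^{2n+1}$, establishing the first equality.

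Finally, to get the polar expression, I would substitute $z_i = r_i e^{\ii\theta_i}$, so $dz_i = e^{\ii\theta_i} dr_i + \ii r_i e^{\ii\theta_i} d\theta_i$ and $d\bar z_i = e^{-\ii\theta_i} dr_i - \ii r_i e^{-\ii\theta_i} d\theta_i$. A direct expansion gives $z_i\, d\bar z_i - \bar z_i\, dz_i = -2\ii r_i^2\, d\theta_i$, whence $\alpha = \sum_i r_i^2\, d\theta_i$. The computation is entirely mechanical; the only thing requiring care is keeping the conventions for $J$, $dz_i \circ J$ and $d\bar z_i \circ J$ consistent with the overall sign of $\alpha$, which is the one nuisance point but hardly a real obstacle.
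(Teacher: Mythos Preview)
Your proof is correct and follows essentially the same approach as the paper: identify $H\mathbb{S}^{2n+1}=\ker(dr\circ J)$, compute $dr$ from $d(r^2)$ on the sphere, and then substitute $z_i=r_ie^{\ii\theta_i}$ for the polar form. You have simply filled in the intermediate computations (the explicit evaluation of $dr\circ J$ and the polar substitution) that the paper leaves to the reader.
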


We observe $(F_a)^{\ast}\alpha = a \alpha$. Moreover,
a direct computation yields
\begin{equation*}
g_E((F_a)_{\ast} v, (F_a)_{\ast} v)= \sum_{i=1}^{n+1} \left((\cos \theta_i v_{x_i} + \sin \theta_i v_{y_i})^2+a^2(\sin \theta_i v_{x_i} - \cos \theta_i v_{y_i})^2\right),
\end{equation*}
where
\begin{equation*}
v= \sum_{i=1}^{n+1} v_{x_i} \partial_{x_i}+ v_{y_i}\partial_{y_i}.
\end{equation*}
Hence
\begin{equation}\label{eq:metric_twist}
\norm{v}_E \leq \norm{(F_a)_{\ast} v}_E \leq a \norm{v},\quad v\in H_p \mathbb{S}^{2n+1},\;p \in U.
\end{equation}

If $F_a$ were smooth, \eqref{eq:metric_twist} would give immediately that $F_a$ is $a$-Lipschitz and a BLD map, but we cannot assume that the curves we are considering lie entirely in $U$. We will use the following result, which is immediate for curves contained in the domain where $F_a$ is smooth. The situation is more subtle when a curve meets the set
\begin{equation*}
B:= \mathbb{S}^{2n+1}\setminus U = \{z\in \mathbb{S}^{2n+1}:\; z_i = 0\text{ for some }i\in \{1,\ldots,n+1\}\},
\end{equation*}
but even if $F_a$ is not differentiable in such points, it is ``differentiable along the considered curves''.

\begin{prop}\label{p:twist_horiz_curve_pres}
A curve $\gamma:[a,b] \to \mathbb{S}^{n+1}$ is horizontal if and only if $\gamma'=F_a \circ \gamma$
is horizontal. In this case,
\begin{equation}\label{eq:metric_ineq_curves_twist}
\norm{\dot{\gamma}(s)}_E \leq \norm{\dot{\gamma'}(s)}_E \leq a \norm{\dot{\gamma}(s)}_E
\end{equation}
for almost every $s\in [a,b]$.
\end{prop}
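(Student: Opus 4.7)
The proposition asserts two things: the equivalence of horizontality for $\gamma$ and $\gamma' = F_a\circ\gamma$, and the pointwise a.e.\ bound \eqref{eq:metric_ineq_curves_twist}. Both would be immediate from \eqref{eq:contact_twist} and \eqref{eq:metric_twist} if $\gamma$ stayed inside the smooth locus $U$, so the real content is the analysis on the branching set $B = \mathbb{S}^{2n+1}\setminus U$, where $F_a$ fails to be smooth (it is only continuous because $r_i e^{i a\theta_i}$ has a removable singularity at $r_i = 0$).

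My plan is to argue coordinate-by-coordinate. After reducing to a Lipschitz parametrization of $\gamma$, write $\gamma(s) = (z_1(s),\ldots,z_{n+1}(s))$ with each $z_i$ absolutely continuous, and observe that the scalar map $z\mapsto |z|(z/|z|)^a$ on $\mathbb{C}$ (extended by $0$ at the origin) is Lipschitz, with differential $e^{ia\theta}\,dr + i a r e^{ia\theta}\,d\theta$ away from $0$. Consequently $\gamma'_i$ is absolutely continuous, and a direct computation in polar coordinates on $U$ yields, for a.e.\ $s$ with $z_i(s)\neq 0$,
\[
|\dot z_i(s)|^2 = \dot r_i^2 + r_i^2 \dot\theta_i^2,\qquad |\dot{\gamma'_i}(s)|^2 = \dot r_i^2 + a^2 r_i^2 \dot\theta_i^2,
\]
so that $|\dot z_i|^2 \le |\dot{\gamma'_i}|^2 \le a^2|\dot z_i|^2$ pointwise a.e.\ on $\{z_i\neq 0\}$, and analogously
\[
\operatorname{Im}(\bar{\gamma'_i}\dot{\gamma'_i}) = a r_i^2 \dot\theta_i = a\operatorname{Im}(\bar z_i \dot z_i).
\]
Summing over $i$ then gives the required metric bound together with $\alpha(\dot\gamma') = a\,\alpha(\dot\gamma)$, and the horizontality equivalence follows since $a\neq 0$.

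The remaining point, which I expect to be the main obstacle, is handling the times $s$ with $z_i(s) = 0$. The key tool is the standard fact that an absolutely continuous function $\mathbb{R}\to\mathbb{C}$ has vanishing derivative almost everywhere on its zero set (each point of the zero set is Lebesgue-a.e.\ a density point, so a.e.\ it is a limit from within the zero set). Applying this to $z_i$ gives $\dot z_i(s) = 0$ for a.e.\ $s\in\{z_i = 0\}$, and applying it to $\gamma'_i$—whose zero set contains $\{z_i = 0\}$—gives $\dot{\gamma'_i}(s) = 0$ for a.e.\ such $s$ as well. Thus on $\bigcup_i \{z_i = 0\}$ both sides of the desired inequality vanish coordinate-by-coordinate, and the contact form $\alpha = \sum r_i^2 d\theta_i$ evaluates to zero on both $\dot\gamma$ and $\dot{\gamma'}$ there.

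Combining the two regimes, the identity $\alpha(\dot\gamma') = a\,\alpha(\dot\gamma)$ and the inequalities \eqref{eq:metric_ineq_curves_twist} hold for a.e.\ $s\in [a,b]$, which proves both assertions.
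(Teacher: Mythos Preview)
Your argument is correct and follows the same coordinate-by-coordinate skeleton as the paper, but the treatment of the zero set is genuinely different. The paper splits $\{z_i=0\}$ into isolated and non-isolated zeros, discards the former as countable, and at a non-isolated zero invokes an ad~hoc lemma (its Lemma~\ref{l:diff_at_zero}): if $z$ is differentiable at a point with $z=\dot z=0$, then so is $w(s)=r(s)e^{i\eta(\varphi(s))}$. You bypass this entirely by first observing that the scalar map $z\mapsto |z|(z/|z|)^a$ is globally $a$-Lipschitz on $\mathbb{C}$, which immediately makes each $\gamma'_i$ absolutely continuous, and then citing the standard Lebesgue-density fact that an absolutely continuous function has derivative zero a.e.\ on its zero set. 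This is cleaner: it avoids the isolated/non-isolated case split, needs no custom lemma, and makes the passage to $\gamma'$ absolutely continuous (hence a.e.\ differentiable) automatic rather than something to be checked pointwise.

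One point you leave implicit is the reverse implication when only $\gamma'$ is assumed horizontal: you derive $\alpha(\dot\gamma')=a\,\alpha(\dot\gamma)$ starting from an absolutely continuous $\gamma$, so to close the equivalence you need that $\gamma'$ rectifiable forces $\gamma$ rectifiable. This follows from $|z_i|=|\gamma'_i|$ together with the local bi-Lipschitz inverse on $\{r_i>0\}$ (giving $|z_i(t)-z_i(s)|\le \mathrm{Var}_{[s,t]}\gamma'_i$ in both the ``interval avoids zero'' and ``interval meets zero'' cases), but it is worth a sentence. The paper is equally terse here, simply declaring the converse ``completely analogous.''
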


\begin{proof}
Notice that $F_a(B)=B$ and $F_a(\mathbb{S}^{2n+1}\setminus B)=\mathbb{S}^{2n+1}\setminus B$. Each point in $\mathbb{S}^{2n+1}\setminus B$ possesses a neighborhood restricted to which $F_a$ is a diffeomorphism. This shows immediately that if $\gamma$ is a curve contained entirely in $\mathbb{S}^{2n+1}\setminus B$, then it is horizontal if and only if $\gamma'$ is. So let us concentrate on the case where the curve intersects $B$. We notice
\begin{equation*}
\gamma(s)\in B\quad \Leftrightarrow \quad \gamma'(s)\in B.
\end{equation*}
Even more, using standard coordinates from the ambient space $\mathbb{C}^{n+1}$, the $i$-th component of $\gamma$ vanishes at some point $s$ if and only if the $i$-th component of $\gamma'$ vanishes. Denote now by $\gamma_i$ the $i$th component of $\gamma$, and by $\gamma'_i$ the $i$th component of $\gamma'$. The strategy is the following: Assume first that $\gamma$ is horizontal. Then the set of points in $[a,b]$ where all the components of $\gamma$ are differentiable is a full measure set $S\subset [a,b]$. Let us look at $s\in S$. If $\gamma_i(s)\neq 0$, then also $\gamma_i'$ is nonvanishing and differentiable at $s$ with
\begin{equation}\label{eq:metric_ineq}
\dot{\gamma}_i(s)^2 \leq \dot{\gamma}_i'(s)^2 \leq a^2 \dot{\gamma}_i(s)^2
\end{equation}
and
\begin{equation}\label{eq:horizontal_tangent}
a\left({\gamma}_i(s) \dot{\bar \gamma}_i(s)- \bar{\gamma}_i(s)\dot{\gamma}_i(s)\right)=
{\gamma'}_i(s) \dot{\overline{\gamma'}}_i(s)- \overline{\gamma'}_i(s)\dot{\gamma'}_i(s).
\end{equation}
Let us now consider $s$ such that $\gamma_i(s)=0$. We can neglect the case where $s$ is an isolated zero, because there can be at most countably many such in $[a,b]$, and hence they form a set of measure zero. So assume that $s$ is not an isolated zero. Since $\gamma_i$ is by assumption differentiable at $s$ and it vanishes at $s$ as well as on a sequence of points converging to $s$, we must have $\dot{\gamma}_i(s)=0$. In order to prove that  also $\gamma'_i$ is differentiable at $s$ with vanishing tangent (so that \eqref{eq:metric_ineq} and \eqref{eq:horizontal_tangent} remain valid in this case), we resort to the following auxiliary result.
\begin{lemma}\label{l:diff_at_zero}
Let $z:[-\varepsilon,\varepsilon] \to \mathbb{C}$ be differentiable at $0$ with $\dot{z}(0)=0$, and $\eta: \R \rightarrow \R$ a smooth function. Then the function $w:[-\varepsilon,\varepsilon] \to \mathbb{C}$, defined (using any branch of the logarithm) by
\begin{equation*}
w(s):= \left\{
\begin{array}{ll}
r(s)e^{\ii \eta(\varphi(s))}&\text{if }z(s)=r(s)e^{\ii \varphi(s)}\\
0&\text{if }z(s)=0 \end{array}
\right.
\end{equation*}
is differentiable in the point $0$ as well, and $\dot{w}(0)=0$.
\end{lemma}
\begin{proof}
This follows immediately from the definition of derivative.
\end{proof}

Applying this lemma to the non-isolated zeros of $z$ concludes the proof of \eqref{eq:metric_ineq} and \eqref{eq:horizontal_tangent} for almost every $s\in [a,b]$. Thus, if $\gamma$ is horizontal, so is $\gamma'$ by \eqref{eq:horizontal_tangent}, and \eqref{eq:metric_ineq}
gives \eqref{eq:metric_ineq_curves_twist}.

Second, one has to show that $\gamma$ is horizontal whenever the image curve $\gamma'=F_a \circ \gamma$ is. Lemma \ref{l:diff_at_zero} has been formulated in a general setting, so that it can also be applied here, and the rest of the proof goes completely analogously, so we do not carry it out here.
\end{proof}

\begin{cor}\label{c:twist_curves}
$F_a$ is $a$-Lipschitz with
\begin{equation*}
\mathrm{length}(\gamma) \leq \mathrm{length}(F_a \circ \gamma) \leq a \cdot \mathrm{length}(\gamma).
\end{equation*}
for every path $\gamma:[a,b] \to \mathbb{S}^{2n+1}$.
\end{cor}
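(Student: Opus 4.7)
The plan is to derive the length inequality by integrating the a.e.\ pointwise tangent-vector bound \eqref{eq:metric_ineq_curves_twist} supplied by Proposition \ref{p:twist_horiz_curve_pres}, and then to read off the $a$-Lipschitz property from the length estimate by joining arbitrary pairs of points with geodesics. The first step is to reduce to the case where $\gamma$ is rectifiable. Indeed, if $\gamma$ is rectifiable, \S\ref{sec:rectifiable} provides a Lipschitz reparametrization of $\gamma$, which is automatically horizontal in the sub-Riemannian setting, and both sides of the claimed inequality are invariant under reparametrization. So I may assume $\gamma$ is itself Lipschitz and horizontal, in which case Proposition \ref{p:twist_horiz_curve_pres} makes $F_a \circ \gamma$ horizontal as well, and in fact Lipschitz with constant at most $a$ times that of $\gamma$ via \eqref{eq:metric_ineq_curves_twist}. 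Both lengths then coincide with the integrals of their tangent norms.

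Integrating \eqref{eq:metric_ineq_curves_twist} over $[a,b]$ yields
\begin{equation*}
\mathrm{length}(\gamma) = \int_a^b \norm{\dot\gamma(s)}_E\, ds \leq \int_a^b \norm{\dot{\gamma'}(s)}_E\, ds = \mathrm{length}(F_a \circ \gamma) \leq a \int_a^b \norm{\dot\gamma(s)}_E\, ds = a \cdot \mathrm{length}(\gamma),
\end{equation*}
which is the desired chain. In the non-rectifiable case $\mathrm{length}(\gamma) = \infty$ makes the upper bound vacuous, and for the lower bound I would argue by contraposition: if $\mathrm{length}(F_a \circ \gamma)$ were finite, then $F_a \circ \gamma$ would admit a horizontal reparametrization, which, lifted through the discrete branched cover $F_a$, yields a matching reparametrization of $\gamma$, and the reverse direction of Proposition \ref{p:twist_horiz_curve_pres} would then force $\gamma$ to be horizontal and hence rectifiable, a contradiction. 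Finally, the $a$-Lipschitz property follows from the length inequality combined with Hopf--Rinow: since $\mathbb{S}^{2n+1}$ is compact and thus complete, any two points $x, y$ are joined by a length-realizing geodesic $\gamma$, and applying the length inequality gives $d(F_a(x), F_a(y)) \leq \mathrm{length}(F_a \circ \gamma) \leq a \cdot d(x,y)$.

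The step I expect to be most delicate is the non-rectifiable case, in particular the lifting argument when $\gamma$ meets the branch set $B$, where $F_a$ fails to be locally bilipschitz. For the intended applications one is essentially always looking at rectifiable curves, for which the entire statement collapses to the single integration displayed above, so this edge case is minor in practice.
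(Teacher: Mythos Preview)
Your proof is correct and follows the same route as the paper, which simply records the corollary as an immediate consequence of Proposition~\ref{p:twist_horiz_curve_pres} together with the rectifiable-curve discussion in \S\ref{sec:rectifiable}. Your version spells out the integration step and the geodesic argument for the Lipschitz bound explicitly; the paper leaves the non-rectifiable edge case implicit as well, so your caution there is appropriate but not a divergence from the paper's treatment.
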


\begin{proof}
This is an immediate consequence of Proposition \ref{p:twist_horiz_curve_pres} and the discussion of rectifiable curves in \S \ref{sec:rectifiable}.
\end{proof}

\begin{cor}\label{c:twist_qr}
The multi-twist map is a BLD (and thus RBL) map on $\mathbb{S}^{2n+1}$ and $|a|$-quasiregular.
\end{cor}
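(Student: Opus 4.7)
The plan is to deduce all three assertions from the length and horizontal-norm bounds already established in Proposition \ref{p:twist_horiz_curve_pres}, Corollary \ref{c:twist_curves}, and \eqref{eq:metric_twist}, together with the general machinery of Section \ref{sec:QR}. Concretely, Corollary \ref{c:twist_curves} supplies the two-sided length distortion inequality with constant $|a|$, so BLD with $L = |a|$ will follow once the required topological conditions (continuity, openness, discreteness, sense-preservation) are verified; RBL is then an immediate consequence of Proposition \ref{tP:bld_qr}; and the sharp quasiregularity constant $K = |a|$ is obtained by applying Proposition \ref{p:smooth_K_qr} on the open dense set $U$.

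Continuity of $F_a$ is immediate from the defining formula \eqref{eq:multi_twist}, since $r_i e^{\ii a \theta_i} \to 0$ as $r_i \to 0$ uniformly in $\theta_i$. On $U$ the map is a smooth local diffeomorphism, hence open, discrete, and (for $a \geq 1$) sense-preserving there. To extend these three topological properties across the singular stratum $B$, I would observe that $F_a$ commutes with radial dilations in $\mathbb{C}^{n+1}$ and arises as the restriction to $\mathbb{S}^{2n+1}$ of the ``radial'' map $G_a: \mathbb{C}^{n+1} \to \mathbb{C}^{n+1}$ acting coordinatewise by $\zeta \mapsto \zeta^a/|\zeta|^{a-1}$ (with $0 \mapsto 0$). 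Each coordinate factor is a branched covering of $\mathbb{C}$ of degree $|a|$, thus open and discrete, so $G_a$ is too. Openness of $F_a(V)$ in $\mathbb{S}^{2n+1}$ is equivalent to openness of the radial cone $\{r y : y \in F_a(V),\, r > 0\} = G_a(\{r x : x \in V,\, r > 0\})$ in $\mathbb{C}^{n+1} \setminus \{0\}$, and discreteness and sense-preservation descend from $G_a$ to $F_a$ in the same way. Combined with Corollary \ref{c:twist_curves}, this establishes that $F_a$ is BLD with constant $|a|$, and Proposition \ref{tP:bld_qr} then gives RBL.

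For the sharp quasiregularity constant, I would apply Proposition \ref{p:smooth_K_qr}. The branch set $B_{F_a}$ is contained in $B$, which has measure zero; since $F_a(B) \subseteq B$, also $|F_a(B_{F_a})| = 0$. On $U$ the map $F_a$ is a diffeomorphism in a neighborhood of each point, and \eqref{eq:metric_twist} gives continuous horizontal stretching factors satisfying $\lambda_-(y) \geq 1$ and $\lambda_+(y) \leq |a|$, whence $\lambda_+(y)/\lambda_-(y) \leq |a|$. Proposition \ref{p:smooth_K_qr} then delivers $|a|$-quasiregularity.

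The main obstacle is the topological extension across $B$, where $F_a$ fails to be differentiable and local diffeomorphism data are unavailable; the radial-cone reduction to the product map $G_a$ on $\mathbb{C}^{n+1}$ sidesteps this, as the corresponding one-variable facts for $\zeta \mapsto \zeta^a/|\zeta|^{a-1}$ are classical. A minor caveat concerns the sign of $a$: for $a \leq -1$, sense-preservation can fail depending on $n$, so the statement is best read with $a \geq 1$ (or with an auxiliary reflection applied for negative $a$), the sharp distortion constant remaining $|a|$ in either case.
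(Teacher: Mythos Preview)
Your proposal is correct and follows the paper's own route: the paper's proof is simply ``Apply Proposition \ref{p:smooth_K_qr} and Corollary \ref{c:twist_curves},'' and you invoke exactly these two results (plus Proposition \ref{tP:bld_qr} for the RBL passage, which the paper leaves implicit in the parenthetical ``and thus RBL''). The additional work you do --- verifying openness, discreteness, and sense-preservation across the singular stratum $B$ via the radial-cone lift to the product map $G_a$ on $\mathbb{C}^{n+1}$ --- fills in a topological step that the paper takes for granted, and your caveat on the sign of $a$ is a fair observation that the paper does not address.
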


\begin{proof}
Apply Proposition \ref{p:smooth_K_qr} and Corollary \ref{c:twist_curves}.
\end{proof}

If $a\in p\mathbb{Z}$ for some positive integer $p$, then $F_a$ induces a well-defined
map on the lens space, namely
\begin{equation}\label{eq:multi_twist_lens_sphere}
f_a:L_{p,\mathbf{q}}\to \mathbb{S}^{2n+1},\quad f_a([z]):= F_a(z)\quad\text{for }z\in\mathbb{S}^{2n+1}.
\end{equation}
 Denoting by $\pi:\mathbb{S}^{2n+1}\to L_{p,\mathbf{q}}$ the usual projection, we obtain a multi-twist map of the lens space as
\begin{equation}\label{eq:multi_twist_lens}
\pi\circ f_a: L_{p,\mathbf{q}}\to L_{p,\mathbf{q}}.
\end{equation}

\begin{lemma}\label{l:lens}
Let $a\in p\mathbb{Z}$. The multi-twist maps $f_a:L_{p,\q}\to \mathbb{S}^{2n+1}$ and $\pi\circ f_a:L_{p,\q}\to L_{p,\q}$ are RBL and quasiregular.
\end{lemma}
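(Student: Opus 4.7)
The plan is to treat $f_a$ first by transferring properties of $F_a$ through the locally isometric covering $\pi$, and then to handle $\pi \circ f_a$ by postcomposing with the (itself RBL) projection $\pi$. The quasiregularity of $f_a$ will actually follow for free from Proposition \ref{p:quotient}(2) once the compatibility condition \eqref{eq:cond_well_def} is checked, but RBL requires its own (short) verification.

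To see that $f_a$ is well-defined, suppose $\pi(z)=\pi(w)$, so $w = R_{p,\q}^{k}(z)$ for some $k \in \Z$. Writing $z_j = r_j e^{\ii \theta_j}$, the $j$-th component of $F_a(w)$ is
\[
r_j e^{\ii a(\theta_j + 2\pi k q_j / p)} = e^{2\pi \ii a k q_j / p} \cdot F_a(z)_j.
\]
Since $p \mid a$, the factor $e^{2\pi \ii a k q_j / p}$ equals $1$ for every $j$ and $k$, so $F_a(w) = F_a(z)$. Proposition \ref{p:quotient}(2), combined with Corollary \ref{c:twist_qr}, then yields that $f_a$ is $|a|$-quasiregular.

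For the RBL property of $f_a$, we transfer the corresponding property of $F_a$ through the local isometry $\pi$. Fix $[z] \in L_{p,\q}$ and a lift $z \in \Sp^{2n+1}$. By Proposition \ref{p:quotient}(1) there is a radius $r_0 > 0$ such that $\pi$ restricts to an isometry from $B(z, r_0)$ onto $B([z], r_0)$. Let $r(z) > 0$ be the RBL radius for $F_a$ at $z$ provided by Corollary \ref{c:twist_qr}. Set $r([z]) := \min\{r_0, r(z)\}$; for any $[w]$ with $d([z],[w]) \leq r([z])$, the unique lift $w \in B(z, r_0)$ satisfies $d(z,w) = d([z],[w])$, and since $f_a([w]) = F_a(w)$, the RBL bound \eqref{eq:rbl} for $F_a$ immediately gives the same bound for $f_a$ with the same constants $c_1, c_2$.

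To handle $\pi \circ f_a$, note that $\pi$ is itself RBL: as a covering between manifolds it is continuous, open, discrete and sense-preserving (hence a branched cover with empty branch set), and it is locally $1$-bi-Lipschitz. Thus Proposition \ref{p:qsp_comp} gives that $\pi \circ f_a$ is RBL. For quasiregularity via Proposition \ref{p:BLD_QR}, observe that because $\pi$ is a local homeomorphism, $B_{\pi \circ f_a} = B_{f_a}$, and since $\pi$ is locally Lipschitz (indeed locally isometric), it preserves sets of measure zero (cf.\ Lemma \ref{l:lusin}), so both $|B_{f_a}| = 0$ and $|\pi(f_a(B_{f_a}))| = 0$ follow from the corresponding statements for $f_a$. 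The main (mild) obstacle is simply keeping the branch-set bookkeeping straight, but since $\pi$ contributes no branching and preserves measure locally, this is routine.
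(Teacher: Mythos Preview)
Your proof is correct and follows essentially the same strategy as the paper: derive the properties of $f_a$ from those of $F_a$ via Proposition~\ref{p:quotient} and the local isometry of $\pi$, then push through to $\pi\circ f_a$ using that $\pi$ is RBL. The only minor route difference is that for the quasiregularity of $\pi\circ f_a$ you go through RBL and Proposition~\ref{p:BLD_QR} (with branch-set bookkeeping), whereas the paper observes directly that $\pi$ is a local isometry, so $H(x,\pi\circ f_a)=H(x,f_a)$ for the metric distortion; both arguments are short and equivalent in spirit, and your version has the virtue of making the well-definedness check and the RBL transfer explicit.
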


\begin{proof}
The first statement follows from Corollary \ref{c:twist_qr} and Proposition \ref{p:quotient}. The quasiregularity of $\pi \circ f_a$ is an easy consequence of the fact that $f_a$ is quasiregular and the projection $\pi$ is isometric. Indeed, the Carnot-Carath\'{e}odory distance of $f_a(z)$ and $f_a(w)$ on $\mathbb{S}^{2n+1}$ equals the distance of $\pi(f_a(z))$ and $\pi(f_a(w))$ on $L_{p,\bf{q}}$ if only $f_a(z)$ and $f_a(w)$ are close enough, which can be arranged by continuity. Since $\pi$ and the branches of its inverse are locally isometric,
they are RBL, and so the RBL property of $f_a$ and $\pi\circ f_a$ follows from the corresponding property of $F_a$.
\end{proof}

\section{UQR mappings on spheres and lens spaces}\label{sec:UQR_lens}

There are essentially two methods known to produce UQR mappings: \emph{Latt\`{e}s construction} and the \emph{conformal trap method}. The proof of Theorem \ref{thm:intro:QRexists} uses the conformal trap construction, which
was first introduced by Iwaniec and Martin in \cite{MR1404085} and \cite{MR1454921} to prove the following theorem.

\begin{thm}[Iwaniec, Martin]
\label{thm:classical-trap}
Let $n\geq 2$ and let $f: \Sp^n \rightarrow \Sp^n$ be a quasiregular map of the Riemannian sphere. Then there exists a UQR mapping $g: \Sp^n \rightarrow \Sp^n$ with the same branch set as $f$.
\end{thm}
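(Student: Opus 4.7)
The plan is to follow the \emph{conformal trap} approach of Iwaniec and Martin, producing $g$ in the form $g = \Phi \circ f$ for a suitable quasiconformal homeomorphism $\Phi: \Sp^n \to \Sp^n$. Because $\Phi$ is a homeomorphism, the branch set is preserved automatically, $B_g = B_f$, so the real task is to arrange that the iterates $g^k$ are $K$-quasiregular for a uniform $K$ independent of $k$, despite the naive chain-rule estimate $(K_\Phi K_f)^k$.

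The preparatory step is to normalize $f$ near a chosen regular point. Pick $x_0 \in \Sp^n \setminus B_f$ and, by pre-composing $f$ with a Möbius rotation, arrange $f(x_0) = x_0$. Then, using a quasiconformal homeomorphism supported in a small set, ``flatten'' $f$ near $x_0$ so that the modified map is Möbius on a small round ball $U_0$ centered at $x_0$. Because each modifier is a homeomorphism, the branch set is unchanged, so we may replace $f$ by the new map.

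The second step is to build the trap itself. Choose a Möbius strong contraction $\mu$ with attracting fixed point $x_0$ such that $\mu(f(U_0)) \Subset U_0$, and extend $\mu$ to a quasiconformal homeomorphism $\Phi: \Sp^n \to \Sp^n$ which coincides with $\mu$ on a neighborhood $V \supseteq f(U_0)$ and is interpolated to something controlled outside. Setting $g := \Phi \circ f$, on $U_0$ the map $g$ equals the composition of two Möbius pieces, hence it is itself Möbius and satisfies $g(U_0) \Subset U_0$. Inside $U_0$ every orbit converges to $x_0$ under conformal dynamics, so $g^k|_{U_0}$ is Möbius (distortion exactly $1$) for all $k$.

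The crux, and the step I expect to be the main obstacle, is upgrading the conformal behavior on $U_0$ to a uniform distortion bound for $g^k$ on all of $\Sp^n$. Since $f$ is surjective and $\Phi$ is a homeomorphism, $g$ is surjective as well, so a naive ``absorption time'' argument $g^N(\Sp^n) \subset U_0$ is blocked: orbits near the Julia set take arbitrarily many iterates to enter $U_0$, and a priori the distortion of $g^k$ can grow as $K^{N(x)}$. The classical resolution is to produce, by pulling back the standard conformal structure on $U_0$ under the iterated branches of $g$, an invariant bounded measurable conformal structure $\sigma$ on $\Sp^n$; then every $g^k$ is $\sigma$-conformal, and comparing $\sigma$ to the standard structure yields a uniform distortion bound $K$ depending only on the essential sup of the ``eccentricity'' of $\sigma$. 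The main technical work is to design $\Phi$ so that the pullback distortion does not compound: this forces a delicate interpolation, balancing the conformality of $\mu$ on $V$ against the quasiconformal extension outside, so that the pullback structures stabilize to a genuinely $L^\infty$-bounded $\sigma$ and, simultaneously, no new branching is introduced.
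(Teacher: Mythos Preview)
The paper does not itself prove this theorem; it is cited as a result of Iwaniec--Martin. The conformal trap method is, however, sketched in Example~\ref{ex:conformalTrap} and carried out in full in the proof of Theorem~\ref{thm:intro:QRexists}, and against that template your outline has a genuine gap.

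Two structural ingredients are missing. First, the trap map is an \emph{inversion} $\iota$ in a small sphere about a regular value $z_0$, not a M\"obius contraction. An inversion exchanges a small ball $B$ with $\Sp^n\setminus\overline B$, so whatever the modified map $g_1$ sends outside $\overline B$ is forced by $\iota$ into $B$; a contraction has no such swap property, which is exactly why your orbits need not enter $U_0$. Second, one does not modify $f$ only near a single fixed point: one chooses a regular value $z_0$ and modifies $f$ to be conformal on small balls $B_0',B_1',\dots,B_N'$ about $x_0$ \emph{and about every preimage} $x_1,\dots,x_N$ of $z_0$, arranging that $g_1$ is conformal on each of these balls and that $g_1^{-1}(B)\subset\bigcup_{i=1}^N B_i'$. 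With both pieces in place one obtains the trichotomy used in the paper's proof: a point in the trap stays there conformally; a point in some $B_i'$ is mapped conformally (perhaps into the trap, perhaps back into some $B_j'$); any other point sees distortion exactly once and then lands in the trap. Hence $K_{g^k}\le K_g$ directly, with no appeal to an invariant structure.

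Your proposed rescue via a pulled-back invariant conformal structure is the wrong direction: the Tukia-type argument (Theorem~\ref{thm:tukia} in the paper) produces a bounded invariant structure \emph{from} uniform quasiregularity, not the other way around. Without the inversion and the modification at all preimages, nothing prevents distortion from compounding along orbits that never enter $U_0$, so the structure you pull back has no reason to be $L^\infty$-bounded, and ``design $\Phi$ so that the pullback distortion does not compound'' is precisely the problem, not a step.
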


We start with a hypothetical example illustrating the conformal trap method.
\begin{example}
\label{ex:conformalTrap}
Suppose we have a quasiregular planar map $f: \R^2 \rightarrow \R^2$ with the following properties:
\begin{enumerate}
\item The point $0$ has two preimages $f^{-1}(0) = \{z_1, z_2\}$.
\item The closures of the unit balls $B_0:=B(0,1)$, $B_1:= B(z_1, 1)$, $B_2:=B(z_2,1)$, and $B_3:=B(f(0), 1)$ are disjoint and do not intersect the branch set.
\item The map $f$ respects the balls: $f(B_1)=f(B_2)=B_0$, $f^{-1}(B_0) = B_1 \cup B_2$, and $f(B_0)=B_3$.
\item \label{condition4}The restriction $f\vert_{\partial B_i}$ is a Euclidean translation for $i=0, 1, 2$.
\end{enumerate}
Under these (idealized) conditions, we can apply the conformal trap method. First, we define a new map $g_1$ that agrees with $f$ outside of the balls $B_i$, and inside the balls $B_i$ is given by a Euclidean translation provided by Condition \eqref{condition4}. Next, we write $g=\iota \circ g_1$, where $\iota(z) = \frac{1}{z}$ is inversion in the unit circle.

It is now straightforward to check that $g$ is UQR. Namely, a point inside $B_0$ is trapped in $B_0$ and sees no distortion. A point inside $B_1$ or $B_2$ likewise sees no distortion, while a point outside of $B_0\cup B_1 \cup B_2$ is sent into $B_0$ and does not see any distortion under further iterations. Furthermore, because we are working in the Euclidean plane, the map $g$ obtained in this way is quasiregular provided that $f$ was quasiregular.
One also notes that the branch sets of $f$ and $g$ agree.
\end{example}

In practice, most assumptions of Example \ref{ex:conformalTrap} are violated for an arbitrary QR mapping, making UQR mappings hard to find. For example, a QR map will in general not send balls to balls and will not necessarily have nice behavior on the boundary of a ball. In past works, these issues were resolved using Sullivan's Annulus Extension Theorem \cite{MR658932}, or in the Heisenberg group setting \cite{MR2927672} by a local quasiconformal flow method. Furthermore, a conformal inversion map $\iota$ is not available on most metric spaces, so that the method is most clearly applicable for mappings between spheres and, as the third author first demonstrated in  \cite{MR1718708}, their quotients.

On sub-Riemannian manifolds, already the existence of a non-injective quasiregular map is not immediate. Indeed, the only prior examples are provided by \cite{MR1630785}, which discusses QR mappings in Carnot groups, and \cite{MR2927672}, which discusses UQR mappings on the compactified Heisenberg group.

Here, we will be working with the multi-twist map on sub-Riemannian spheres \eqref{eq:multi_twist}  and lens spaces \eqref{eq:multi_twist_lens_sphere}.  The inversion $\iota$ will be obtained from the antipodal map on $\Sp^{2n+1}$, which under stereographic projection  (see \cite[p.328,329]{KR1}) becomes the \emph{Kor\'{a}nyi inversion} of the Heisenberg group, see \cite[p. 35]{KR2}.

No analogue of Sullivan's Annulus Extension Theorem is known to hold on sub-Riemannian manifolds, so we are not able to conclude that \textbf{every} non-injective quasiregular map on the sub-Riemannian sphere has a UQR counterpart. We can, however, prove that such a counterpart exists  for mappings with a particular symmetry like the multi-twist map which are defined on a contact manifold, where it is possible to construct the quasiconformal extension using local contact flows.

\subsection{Contact flows}\label{ss:flow}

Recall that a smooth compactly supported vector field $W$ in $\mathbb{R}^n$ generates a  global flow $g_s$ with $\partl{s} g_s = W$ for all $s \in \R$, where $g_s: \mathbb{R}^n \to \mathbb{R}^n$ is a diffeomorphism. A corresponding statement holds for smooth vector fields on compact manifolds.

Not every vector field on a sub-Riemannian manifold generates a flow of quasiconformal mappings (with respect to the sub-Riemannian metric). The primary obstruction comes from the fact that a quasiconformal maps must preserve the horizontal bundle. In particular, if $M$ is a contact manifold, any quasiconformal diffeomorphism of $M$ must be a contact transformation.

Libermann \cite{MR0119153} has provided a characterization of vector fields on a  smooth compact contact manifold $(M,\alpha)$ which generate a flow of contact transformations.
We follow the presentation in \cite{MR1340848}. Let $T$ be the Reeb vector field, uniquely determined by the conditions $\alpha(T)=1$ and $d\alpha(T,\cdot)=0$. We consider further
\[
\iota: \mathrm{ker}\alpha \to \{\omega\in \wedge^1 M: \omega(T)=0\},\quad W\mapsto d\alpha(W,\cdot)
\]
and let $\sharp$ be its inverse.

\begin{thm}[Libermann]
\label{thm:Libermann}
Let $(M,\alpha)$ be a smooth compact contact manifold. Then,
\begin{enumerate}
\item any smooth vector field $W$ generating a flow of contact transformations of $M$ is of the form
\[\label{eq:contact_vfd}
    W=\rho T +  \sharp((T\rho)\alpha -d\rho),\]
where $\rho = \alpha(W)$,
\item any smooth vector field of the form \eqref{eq:contact_vfd} for a smooth function $\rho:M\to \mathbb{R}$ generates a flow of contact transformations of $M$ and $\alpha(W)=\rho$.
\end{enumerate}
The function $\rho$ is called \emph{potential} of the vector field $W$.
\end{thm}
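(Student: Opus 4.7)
The plan is to translate the contact condition $g_s^*\alpha = \lambda_s \alpha$ on the flow into an infinitesimal condition on $W$ via the Lie derivative, and then to solve that condition using the splitting $TM = \mathbb{R} T \oplus \ker\alpha$. Compactness of $M$ will guarantee that the ODE generating the flow is globally integrable, so the two parts of the statement are really just the infinitesimal equivalence.

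For the forward direction, suppose $W$ generates a flow $g_s$ of contact transformations, so that $g_s^*\alpha = \lambda_s \alpha$ for a smooth positive function $\lambda_s$ on $M$ with $\lambda_0 \equiv 1$. Differentiating at $s=0$ gives $\mathcal{L}_W \alpha = \mu \alpha$ for $\mu := \frac{d}{ds}\big|_{s=0}\lambda_s$. By Cartan's magic formula and setting $\rho := \alpha(W)$,
\[
\mathcal{L}_W \alpha = d(\iota_W\alpha) + \iota_W d\alpha = d\rho + \iota_W d\alpha.
\]
Evaluating the identity $d\rho + \iota_W d\alpha = \mu\alpha$ on the Reeb field $T$ and using $\alpha(T)=1$ together with $d\alpha(T,\cdot)=0$, I obtain $\mu = T\rho$. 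Hence
\[
\iota_W d\alpha = (T\rho)\alpha - d\rho.
\]
Writing $W = \rho T + W_H$ with $W_H \in \ker\alpha$ (which is forced by $\alpha(W)=\rho$ and $\alpha(T)=1$), and using $\iota_T d\alpha = 0$, the left hand side reduces to $\iota(W_H)$. The right hand side annihilates $T$, as is seen by a direct check, so it lies in the image of $\iota$, and I can apply $\sharp$ to conclude
\[
W_H = \sharp\bigl((T\rho)\alpha - d\rho\bigr),
\]
which is \eqref{eq:contact_vfd}.

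For the converse, define $W$ from a smooth function $\rho$ by formula \eqref{eq:contact_vfd}. Then $\alpha(W) = \rho$ since $\sharp$ maps into $\ker\alpha$, and reversing the computation above gives
\[
\mathcal{L}_W \alpha = d\rho + \iota_W d\alpha = d\rho + (T\rho)\alpha - d\rho = (T\rho)\alpha.
\]
Since $M$ is compact, $W$ generates a global flow $g_s$, and the pullback satisfies the pointwise ODE
\[
\tfrac{d}{ds}(g_s^*\alpha) = g_s^*(\mathcal{L}_W\alpha) = \bigl((T\rho)\circ g_s\bigr)\, g_s^*\alpha,
\]
with initial condition $g_0^*\alpha = \alpha$. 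Uniqueness for this linear ODE implies $g_s^*\alpha = \lambda_s \alpha$ with $\lambda_s(x) = \exp\bigl(\int_0^s (T\rho)(g_u(x))\,du\bigr) > 0$, so each $g_s$ is a contact transformation and the assertion $\alpha(W) = \rho$ holds by construction.

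The main technical point to isolate is the well-definedness of the $\sharp$-term: one must verify that $(T\rho)\alpha - d\rho$ indeed annihilates $T$, which is exactly where the value $\mu = T\rho$ enters, and that $\iota$ is an isomorphism onto its stated target, which follows from the non-degeneracy of $d\alpha$ on $\ker\alpha$ in a contact manifold. Once this linear algebra is in place, the rest of the argument is a direct manipulation of Cartan's formula together with standard ODE existence on a compact manifold.
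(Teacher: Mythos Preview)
Your proof is correct and follows the standard argument via Cartan's magic formula and the Reeb splitting. Note, however, that the paper does not supply its own proof of this theorem: it is quoted as a classical result of Libermann, with the presentation attributed to \cite{MR1340848}, so there is nothing in the paper to compare your argument against. Your write-up is exactly the expected derivation and would serve well as a self-contained justification.
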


In \cite{MR1340848}, the condition has been phrased explicitly for the \textbf{sub-Riemannian sphere} $\mathbb{S}^{3}$:
\begin{prop}
A smooth vector field $W$ on $\Sp^3$ generates a smooth $1$-parameter group of contact transformations if and only if
\begin{equation}\label{eq:vect_sphere}
W=\ii(\bar Z\rho)Z-\ii(Z\rho)\bar Z+\rho T,
\end{equation}
for a smooth function $\rho: \Sp^3 \to \mathbb{R}$, where
\begin{equation*}
Z=\bar z_2 \frac{\partial}{\partial z_1}-\bar z_1 \frac{\partial}{\partial z_2},\quad\text{and}\quad T=-2\Im(z_1\frac{\partial}{\partial z_1}+ z_2 \frac{\partial}{\partial z_2}).
\end{equation*}
\end{prop}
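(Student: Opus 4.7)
The strategy is to specialize Libermann's characterization (Theorem~\ref{thm:Libermann}) to the contact structure of $\Sp^3$ and to identify the abstract horizontal component $\sharp((T\rho)\alpha - d\rho)$ with the explicit expression $\ii(\bar Z\rho)Z - \ii(Z\rho)\bar Z$. With this identification in hand, Libermann's formula \eqref{eq:contact_vfd} becomes precisely \eqref{eq:vect_sphere}, and both implications of the ``if and only if'' follow at once.

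First I would set up the data. From the preceding lemma, $\alpha = \frac{\ii}{2}\sum_{i=1}^{2}(z_i\, d\bar z_i - \bar z_i\, dz_i)$ is the contact form defining $H\Sp^3 = \ker\alpha$. A short direct computation using $dz_i(T) = \ii z_i$ gives $\alpha(T) = |z_1|^2 + |z_2|^2 = 1$ on $\Sp^3$; and since $d\alpha = \ii \sum_i dz_i \wedge d\bar z_i$, the identity $d\alpha(T, X) = 0$ for $X$ tangent to $\Sp^3$ reduces to the tangency condition $X(|z|^2) = 0$. Hence $T$ is the Reeb field of $\alpha$. One further checks that $Z$ is tangent to $\Sp^3$ and satisfies $\alpha(Z) = 0$, so $Z$ and its conjugate $\bar Z$ form a pointwise $\C$-basis of $H\Sp^3 \otimes \C$.

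The central computation is the pairing $d\alpha(Z, \bar Z) = \ii(|z_1|^2 + |z_2|^2) = \ii$ on $\Sp^3$, which makes $\iota$ easy to invert on $H\Sp^3 \otimes \C$. Given a real smooth function $\rho$ on $\Sp^3$, put $W_\rho := \ii(\bar Z\rho)Z - \ii(Z\rho)\bar Z$. Since $\bar Z\rho = \overline{Z\rho}$ when $\rho$ is real, the second summand is the complex conjugate of the first, so $W_\rho$ is a real vector field; it is horizontal because $\alpha$ vanishes on $Z$ and $\bar Z$. Bilinearity of $d\alpha$ together with the pairing above then yield $d\alpha(W_\rho, Z) = -Z\rho$ and $d\alpha(W_\rho, \bar Z) = -\bar Z\rho$, which match $((T\rho)\alpha - d\rho)(V) = -V\rho$ when $V \in \{Z, \bar Z\}$. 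Since both covectors also annihilate $T$, they coincide on all of $T\Sp^3$, and by the uniqueness inherent in the definition of $\sharp$ we conclude $\sharp((T\rho)\alpha - d\rho) = W_\rho$.

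The principal obstacle is bookkeeping rather than conceptual: one must distinguish vectors tangent to $\Sp^3$ from arbitrary vectors in $T\C^2$, keep careful track of which identities hold only after restriction to the unit sphere, and verify the real-valued nature of $W_\rho$. Once these are handled, the proposition is an explicit translation of Libermann's intrinsic statement into the complex coordinates of $\Sp^3 \subset \C^2$, with $\rho = \alpha(W)$ as potential.
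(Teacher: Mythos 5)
Your proof is correct and follows exactly the route the paper intends: the paper states this proposition without proof, deferring to Kor\'anyi--Reimann, and your argument is precisely the specialization of Libermann's theorem (Theorem \ref{thm:Libermann}) to $(\Sp^3,\alpha)$ that this citation stands for. The key identities you use --- $\alpha(T)=1$ and $d\alpha(T,\cdot)=0$ on $T\Sp^3$, $\alpha(Z)=0$, $d\alpha(Z,\bar Z)=\ii$ on the sphere, and hence $d\alpha(W_\rho,V)=-V\rho=((T\rho)\alpha-d\rho)(V)$ for $V\in\{Z,\bar Z\}$ --- all check out, and together with the nondegeneracy of $d\alpha$ on $\ker\alpha$ they justify the identification $\sharp((T\rho)\alpha-d\rho)=\ii(\bar Z\rho)Z-\ii(Z\rho)\bar Z$.
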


\begin{remark}

Based on Libermann's work, Kor\'anyi and Reimann have established the theory of quasiconformal flows on the \textbf{Heisenberg group}, which is a (noncompact) contact manifold. Their work provides in particular a mild condition which ensures uniqueness and quasiconformality of the flow and also a way to estimate the distortion of the quasiconformal maps produced by such flows, based on the second horizontal derivatives of the potential $\rho$, see Theorem H in \cite{KR2}.
\end{remark}

\subsection{Local flows}\label{s:flow}

We will apply the flow technique to locally modify certain quasi{\-}regular mappings at a point where they are locally diffeomorphic. In this situation, we will have to work with local flows since we cannot expect the flow curves to exist for all times.
For a point $p$ in the domain $\Omega$ of the vector field $W$ under consideration, let $I_p$ be the maximal interval around the origin so that the associated flow curve $s\mapsto g_s(p)$ exists for all $s\in I_p$, in other words, the Cauchy problem
\begin{equation*}
\left\{\begin{array}{l}\frac{\partial}{\partial s}g_s(p)=W_{g_s(p)}\\g_0(p)=p\end{array}\right.
\end{equation*}
has a (unique)
 solution on $I_p$. For $s\in\mathbb{R}$, we denote the set of points $p$ for which $g_s(p)$ is defined by
 \begin{equation*}
 \Omega_s:=\{p\in \Omega: s\in I_p\}.
 \end{equation*}
 Notice that $\Omega_0=\Omega$ and $\Omega_{s}\supseteq \Omega_{s'}$ for $0\leq s<s'$.

\begin{defi}
Let $\Omega \subset \mathbb{S}^{2n+1}$ be a nonempty domain  and let $f: \Omega \to \mathbb{S}^{2n+1}$ be quasiconformal onto its image. We say that $f$ \emph{embeds in a quasiconformal flow} if there is a vector field $W$ on $\Omega$ such that the associated flow $g_s$ satisfies:
\begin{enumerate}
\item $g_s:\Omega_s\rightarrow \mathbb{S}^{2n+1}$ is quasiconformal onto its image for all $s\in [0,1]$ with $\Omega_s\neq \emptyset$,
\item $g_0= \text{identity}$,
\item $g_{1}|_{\Omega_1}=f|_{\Omega_1}$.
\end{enumerate}
Suppose further that for each $s\in [0,1]$ we have that $g_s(p_*)=p_*$ for some $p_*\in \Omega$. We then say that \emph{$f$ embeds in a quasiconformal flow fixing $p_*$}.
\end{defi}

In certain situations, the lack of an annulus extension for quasiconformal mappings on the sub-Riemannian sphere can be overcome by the following construction.

\begin{lemma}
\label{lemma:deformation1}
Let $\Omega\subset \mathbb{S}^{2n+1}$ be open, $p_{\ast} \in \Omega$, and $f: \Omega\rightarrow \mathbb{S}^{2n+1}$ quasiconformal onto its image with $f(p_{\ast})=p_{\ast}$.
Suppose further that $f$ embeds in a quasiconformal flow which fixes $p_{\ast}$, and one is given a ball $B$ centred at $p_{\ast}$ so that the closure of $B$ is completely contained in $\Omega_1\subset \Omega$.
Then there exists a quasiconformal RBL map $g_1: \Omega \rightarrow \mathbb{S}^{2n+1}$ and a ball $B'\subset B$ centred at $p_{\ast}$ so that:
\begin{enumerate}
\item $g_1\vert_{\Omega\backslash B} = f\vert_{\Omega\backslash B}$,
\item $g_1\vert_{B'}= \mathrm{id}$.
\end{enumerate}
\end{lemma}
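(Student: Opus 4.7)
The plan is to use the given quasiconformal flow to smoothly deform $f$ to the identity near $p_*$ by cutting off the \emph{potential} of the generating contact vector field rather than modifying $f$ directly; this preserves contactness, and hence quasiconformality, of the resulting time-1 map.

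By Libermann's theorem (Theorem~\ref{thm:Libermann}), the contact flow $g_s$ on $\Omega_1$ is generated by a smooth contact vector field $W$ with smooth potential $\rho$. Since $g_s(p_*)=p_*$ for every $s\in[0,1]$ and the flow is jointly continuous in $(s,p)$, a compactness argument produces nested balls $B''\subset B_1\subset B_2\subset B$ around $p_*$ such that $\overline{B_1}\subset B_2$, $\overline{B_2}\subset B$, and $g_s(\overline{B''})\subset B_1$ for every $s\in[0,1]$. Choose a smooth cutoff $\chi\colon\Sp^{2n+1}\to[0,1]$ with $\chi\equiv 1$ on $\overline{B_1}$ and $\chi\equiv 0$ off $B_2$, and let $\tilde W$ be the contact vector field produced by Libermann's theorem from the potential $\tilde\rho:=\chi\rho$ (extended by zero off $\Omega_1$). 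Then $\tilde W$ is smooth and compactly supported in $\overline{B_2}$, so it generates a global contact flow $\tilde g_s$ on $\Sp^{2n+1}$ for all $s\in\R$ which is the identity off $\overline{B_2}$.

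The key localization claim is that $\tilde g_s|_{B''}=g_s|_{B''}$ for every $s\in[0,1]$. On the open set $\{\chi=1\}\supset\overline{B_1}$ the potentials coincide, so formula~\eqref{eq:contact_vfd} shows that $\tilde W\equiv W$ as vector fields there. By construction, for any $p\in B''$ the orbit $s\mapsto g_s(p)$ stays in $B_1\subset\{\chi=1\}$, so uniqueness of ODE solutions forces $\tilde g_s(p)=g_s(p)$; in particular $\tilde g_1|_{B''}=f|_{B''}$. Since $\tilde g_1$ is a global contact diffeomorphism that is the identity off $\overline{B_2}\subset B$, it restricts to a homeomorphism of $B$ onto itself and of $\Omega$ onto itself, so I can define $g_1:=f\circ\tilde g_1^{-1}$ on $\Omega$. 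Outside $B$ one has $\tilde g_1=\mathrm{id}$, whence $g_1=f$, giving~(1). On $f(B'')=\tilde g_1(B'')$ the inverse satisfies $\tilde g_1^{-1}=f^{-1}$, whence $g_1=\mathrm{id}$; since $f(p_*)=p_*$ and $f$ is open, $f(B'')$ contains a ball $B'\subset B$ around $p_*$, giving~(2). Quasiconformality of $g_1$ follows from the quasiconformality of $f$ and of the contact diffeomorphism $\tilde g_1$ (Corollary~\ref{c:contacto_bld}), and the RBL property follows from the same corollary together with the RBL character of $f$ in the intended applications.

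I expect the main obstacle to be the localization step: the nested scales $B''\subset B_1\subset B_2\subset B$ must be calibrated so that flow trajectories starting in $B''$ remain trapped in the region $\{\chi=1\}$ throughout $s\in[0,1]$. This rests on joint continuity of the flow at the fixed point $p_*$, and the decision to implement the cutoff at the level of the potential $\rho$ (rather than of $W$ itself) is essential to preserve the contact condition and hence the quasiconformality of the modified flow.
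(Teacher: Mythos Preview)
Your argument is correct but takes a genuinely different route from the paper's. The paper cuts off the potential in the \emph{opposite} direction: it defines a bump function $\varphi$ supported in $\Omega\setminus \overline{B'}$ with $\varphi\equiv 1$ on the compact set $K=\bigcup_{s\in[0,1]}g_s(\partial B)$, takes the contact field associated to $\rho'=\varphi\rho$, and observes that the resulting time-$1$ map is the identity on $B'$ (where $\rho'=0$) and agrees with $f$ on $\partial B$ (since the orbit of $\partial B$ stays in $\{\varphi=1\}$). The map $g_1$ is then obtained by \emph{gluing} this modified flow on $B$ to $f$ on $\Omega\setminus B$ via Proposition~\ref{p:qsp_glue}. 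Your approach instead keeps the potential unchanged near $p_\ast$ and kills it outside $B$, so that $\tilde g_1$ agrees with $f$ near $p_\ast$ and is the identity off $B$; you then set $g_1=f\circ\tilde g_1^{-1}$. This avoids the gluing step and the need to track the orbit of $\partial B$, at the cost of introducing a composition with an inverse and an extra openness argument to locate $B'\subset f(B'')$. One small imprecision: $\{\chi=1\}$ is closed, not open, so to conclude $\tilde W=W$ on $\overline{B_1}$ you should choose $\chi\equiv 1$ on an open neighbourhood of $\overline{B_1}$ (the Libermann formula involves derivatives of the potential); this is trivial to arrange. Also, the RBL conclusion in both proofs implicitly uses that $f$ itself is RBL, which is not stated in the hypotheses but holds in all applications since $f$ is the time-$1$ map of a smooth contact flow.
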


\begin{proof}
Since $f$ can be locally obtained by a flow, there is a corresponding contact vector field locally generating $g_s$ with $g_1=f$. The identity, on the other hand, is trivially obtained from the constant zero vector field. Since we are interested in a \textbf{quasiconformal} extension, it is not sufficient to consider an arbitrary smooth interpolation between the two vector fields. Instead, we will reconcile the corresponding potential functions by means of a bump function and work with the contact vector field associated to this modified potential.

By assumption $\overline{B}\subset \Omega_1$ and so $g_s(\partial B)$ exists for all $s\in [0,1]$ and is contained in $\Omega$. The set
\begin{equation*}
K:=\cup \{g_s(\partial B):\; s\in [0,1]\}
\end{equation*}
is compact
 and disjoint from $p_{\ast}$, we can thus find a small ball $B'$ centred at $p_{\ast}$ so that $K\cap B'=\emptyset$. Let then $\varphi$ be a bump function associated to $\Omega \setminus B'$ and $K$, that is, a smooth function $\varphi: \mathbb{S}^{2n+1} \to [0,1]$ such that
 \begin{enumerate}
 \item $\mathrm{spt}\varphi \subseteq \Omega \setminus \overline{B'}$
 \item $\varphi|_{K}\equiv 1$.
 \end{enumerate}
Consider the contact vector field $W$ on $\Omega$ and an associated potential $\rho=\alpha(W)$ given by the quasiconformal flow in which $f$ embeds. We define the modified potential $\rho':\mathbb{S}^{2n+1}\to \mathbb{R}$ by setting
\begin{equation*}
\rho':=\left\{\begin{array}{ll}0&\text{on  }B'\cup (\mathbb{S}^{2n+1}\setminus \Omega)\\ \varphi \rho&\text{on }\Omega \setminus (B' \cup K)\\ \rho&\text{on }K\end{array}\right.
\end{equation*}
The so defined $\rho'$ is a smooth, compactly supported function and accordingly, the corresponding vector field $W'$ will generate a flow $(g_s)_s$ of quasiconformal transformations. Since
\begin{equation*}
W'|_{B'}\equiv 0\quad \text{and}\quad W'|_{K}\equiv W,
\end{equation*}
the uniqueness of the flow
and the fact that $B'$ and $K$ have been chosen such that $g_s(B')=B'$ and $g_s(\partial B)\subset K$ for all $s\in [0,1]$, imply
\begin{equation*}
g_1|_{B'}=\mathrm{id}|_{B'}\quad \text{and}\quad g_1|_{\partial B}=f|_{\partial B}.
\end{equation*}
As $g_1$ has the correct boundary values on $B$, we may glue it to the map $f$ in the following way, where, by abuse of notation, we write again $g_1:\Omega \to \mathbb{S}^{2n+1}$ for the modified map,
\begin{equation*}
g_1:=\left\{\begin{array}{ll}g_1&\text{on }{B}\\f&\text{on }\Omega\setminus B\end{array}\right.
\end{equation*}

The RBL property follows from the respective properties of the identity, of $f$ and of the contact transformations by Proposition \ref{p:qsp_glue}.
\end{proof}

We point out that the conditions which we have imposed on the map $f$ in Lemma \ref{lemma:deformation1} are rather restrictive. Not only do we require that a contact vector field $W$ can be found so that $f$ would embed in a flow of $W$, we also ask that a specific point is fixed by all flow maps. Although the latter in not the case for the multi-twist map \eqref{eq:multi_twist} on the sphere, we can still make use of this construction by composing with appropriate conformal maps so as to reduce to a situation where one point is fixed for all times. This works thanks to the special symmetry of the multi-twist map.

\subsection{Auxiliary results}\label{ss:conf_trap}

Before heading to the proof of Theorem \ref{thm:intro:QRexists}, we record a few auxiliary results. The first one shows how to apply Lemma \ref{lemma:deformation1} in a specific situation to find a quasiconformal interpolation between the multi-twist map and a conformal map, which can be different from the identity. Note that the lemma is stated in broad generality to allow us to work with lens spaces in the proof of Theorem \ref{thm:intro:QRexists}.

\begin{lemma}\label{l:ext_multi_twist}

Let $F_a:\mathbb{S}^{2n+1}\to \mathbb{S}^{2n+1}$ be the multi-twist map defined in \eqref{eq:multi_twist}.  Furthermore, fix:
\begin{enumerate}
\item A point $z_*=(r_{1*} e^{\theta_{1*}\ii}, \ldots,  r_{n+1*} e^{\theta_{n+1*}\ii})\in \Sp^{2n+1}\backslash B_{F_a}$,
\item An open set $U\subset \Sp^{2n+1}$ containing $z_*$.
\item A ball $B=B(z_*,R) \subset U$.
\end{enumerate}
Assume furthermore that $\overline B(F_a(z_*), R)\subset F_a(U)$ and $R<R_0$.

Then there exists a quasiregular RBL map $G_1: U \rightarrow \Sp^{2n+1}$ and a ball $B' \subset B$ such that:
\begin{enumerate}
\item $G_1(z_*)=F_a(z_*)$,
\item $G_1|_{B'}$ is an isometry (in particular, conformal),
\item $G_1|_{U\setminus B}=F_a|_{U\setminus B}$.
\end{enumerate}
\end{lemma}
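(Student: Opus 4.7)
The plan is to reduce to Lemma \ref{lemma:deformation1} after pre-composing $F_a$ with an isometry so that the map fixes the base point $z_\ast$. The rotation
\(
R := R_{(a-1)\theta_{1\ast},\,\ldots,\,(a-1)\theta_{n+1,\ast}}
\)
from Example \ref{ex:rot} is an isometry of $\mathbb{S}^{2n+1}$ satisfying $R(z_\ast)=F_a(z_\ast)$, so that $\tilde F := R^{-1}\circ F_a$ fixes $z_\ast$. Since $z_\ast\notin B_{F_a}$, the map $\tilde F$ is a local diffeomorphism (hence quasiconformal onto its image) on a neighborhood $V\subset U$ of $z_\ast$, and it is globally $|a|$-quasiregular by Corollary \ref{c:twist_qr}.

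Next, I would exhibit $\tilde F|_V$ as the time-$1$ map of a contact flow fixing $z_\ast$. Choosing smooth branches of $\arg z_i$ near $\theta_{i\ast}$ (possible since each $r_{i\ast}>0$) equips $V$ with polar coordinates $(r_i,\theta_i)$, and I set
\(
\tilde H_t(r_i,\theta_i) := \bigl(r_i,\,\theta_{i\ast}+a^t(\theta_i-\theta_{i\ast})\bigr),\quad t\in\R.
\)
The relations $\tilde H_{t+s}=\tilde H_t\circ \tilde H_s$, $\tilde H_0=\mathrm{id}$, $\tilde H_1=\tilde F$, and $\tilde H_t(z_\ast)=z_\ast$ are then immediate, and a direct computation yields $\tilde H_t^\ast\alpha=a^t\alpha$, showing that each $\tilde H_t$ is a contact transformation. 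The infinitesimal generator
\(
W = (\log a)\sum_{i=1}^{n+1}(\theta_i-\theta_{i\ast})\,\partial_{\theta_i}
\)
is a smooth contact vector field on $V$ (Theorem \ref{thm:Libermann}) with smooth potential $\rho=\alpha(W)=(\log a)\sum_i r_i^2(\theta_i-\theta_{i\ast})$.

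With these ingredients I would invoke Lemma \ref{lemma:deformation1} on $\Omega=V$ with $f=\tilde F$, $p_\ast=z_\ast$, and the given ball $B$. The smallness constant $R_0$ in the hypothesis is chosen so that $B\subset V$ and $\overline B\subset V_1$ (the time-$1$ flow domain); this is possible since $W$ vanishes at $z_\ast$, so arbitrarily small closed balls about $z_\ast$ lie in $V_1$. The lemma produces a ball $B'\subset B$ centered at $z_\ast$ and a quasiconformal RBL map $g_1:V\to \mathbb{S}^{2n+1}$ with $g_1|_{B'}=\mathrm{id}$ and $g_1|_{V\setminus B}=\tilde F|_{V\setminus B}$. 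I then define $G_1:U\to \mathbb{S}^{2n+1}$ by $G_1=F_a$ on $U\setminus B$ and $G_1=R\circ g_1$ on $B$. On $\partial B$ both formulas agree since $R\circ \tilde F=F_a$, so $G_1$ is continuous; Proposition \ref{p:qsp_glue} gives that $G_1$ is RBL and Proposition \ref{p:BLD_QR} that it is quasiregular (the branch set is contained in $B_{F_a}$ and thus has measure zero). The three required properties are then immediate: $G_1(z_\ast)=R(z_\ast)=F_a(z_\ast)$, $G_1|_{B'}=R$ is an isometry, and $G_1|_{U\setminus B}=F_a|_{U\setminus B}$.

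The main technical point is the flow construction itself: one must verify that the locally defined potential $\rho$ gives rise via Theorem \ref{thm:Libermann} to a contact flow on $V$ whose bump-function modification, performed inside the proof of Lemma \ref{lemma:deformation1}, remains a genuine quasiconformal contact transformation. Once this is in place, all remaining steps are routine checks of RBL and QR properties via the gluing results of \S\ref{sec:QR}.
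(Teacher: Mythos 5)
Your proposal is correct and follows essentially the same route as the paper: exhibit the multi-twist map, normalized by a rotation so that it fixes the base point, as the time-one map of the interpolating contact flow $\theta_i \mapsto \theta_{i*} + a^t(\theta_i - \theta_{i*})$, apply Lemma \ref{lemma:deformation1}, and undo the rotation. The only difference is cosmetic: the paper conjugates the whole problem to the point $z'=(r_{1*},\ldots,r_{n+1*})$ and uses the flow $H_s$ there, whereas you write down the conjugated flow directly at $z_*$; these are the same construction in different coordinates.
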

Here, $R_0$ and the bound for the radius of $B'$ depend only on the values of $r_{1*}, \cdots, r_{n+1*}$.
\begin{proof}
We would like to apply Lemma \ref{lemma:deformation1}. Recall that the map $F_a$ is given by:
\(
F_a(r_1 e^{\ii\theta_1},\ldots,r_{n+1}e^{\ii\theta_{n+1}})=(r_1 e^{\ii a\theta_1},\ldots,r_{n+1}e^{\ii a\theta_{n+1}}).
\)
We need to show that $F_a$ locally embeds in a flow. Consider the set
\begin{equation*}
\Omega:=\{(r_1e^{\ii \theta_1},\ldots,r_{n+1}e^{\ii \theta_{n+1}}):\; \theta_i \in (-\pi,\pi), r_i \neq 0, r_1^2+\cdots r_{n+1}^2=1\}.
\end{equation*}
On $\Omega$, we can define a family of functions $H_s$ interpolating between $F_a$ and the identity map:
\begin{equation*}
H_s(r_1e^{\ii \theta_1},\ldots,r_ne^{\ii \theta_n})=(r_1 e^{\ii a^s\theta_1},\ldots,r_{n+1}e^{\ii a^s\theta_{n+1}})\text{, for }s\in[0,1].
\end{equation*}
It is easy to see that the family $H_s$ corresponds locally to a flow along a vector field. We would like to apply Lemma \ref{lemma:deformation1} on a neighborhood of $z_*$ using the family $H_s$, but this would require that $H_s(z_*)=z_*$ for each $s$, which we cannot assume.

Consider instead the point
$z':=(r_{1*}, \cdots, r_{n+1*})$. Clearly, $H_s(z')=z'$ for all $s$. The rotational isometry $R_{\theta_{1*}, \ldots, \theta_{n+1*}}$  of $\Sp^{2n+1}$ (see \eqref{eq:rotation})  relates $z'$ and $z_*$:
\(
R_{\theta_{1*}, \ldots, \theta_{n+1*}}(z')=z_*.
\)
Indeed, we have the following stronger property for the multi-twist map:
\begin{equation}
\label{eq:invariance}
F_a = R_{a\theta_{1*},\ldots,a\theta_{n+1*}} \circ F_a \circ R_{-\theta_{1*}, \ldots,-\theta_{n+1*}}.
\end{equation}
This allows us to appy Lemma \ref{lemma:deformation1} in a neighborhood of $z'$ rather than $z_*$, and then conjugate by the rotation maps to obtain the desired map $G_1$.

Indeed, note that the family $H_s$ satisfies the assumptions of Lemma \ref{lemma:deformation1} with respect to the fixed point $z'$. Hence, we can find small enough balls and a quasiconformal RBL map $H_1$ on $\Omega_1$ so that
\begin{equation}
\label{eq:modification}
H_1:=
\left\{\begin{array}{ll}
F_a&\text{outside }B(z',R)\\
\mathrm{id}&\text{on }B(z',r)
\end{array}\right..
\end{equation}
Here, $0<r<R\leq R_0$, where $R_0$ is small enough so that $B(z',R_0)\subset \Omega_1$, that is, $H_s(z)$ exists for all $z\in B(z',R_0)$ and all $s\in [0,1]$.

We can now extend $H_1$ by $F_a$ in $\mathbb{S}^{2n+1}\setminus \Omega_1$ and define
\(
G_1 :=  R_{a\theta_{1*},\ldots,a\theta_{n+1*}} \circ H_1 \circ R_{-\theta_{1*}, \ldots,-\theta_{n+1*}}.
\)
By \eqref{eq:invariance} and \eqref{eq:modification}, this is the desired modification of $F_a$. Namely, $G_1$ satisfies:
\begin{equation*}
G_1= \left\{\begin{array}{ll}
R_{\theta_{1*}(a-1), \ldots,\theta_{n+1*}(a-1)}&\text{on }B(z_*,r)\\
\text{QC interpolation}&\text{on }B(z_*,R)\setminus B(z_*,r)\\
F_a&\text{outside }B(z_*,R)
\end{array}\right.,
\end{equation*}
as desired.
\end{proof}

We will need a statement as in Lemma \ref{l:ext_multi_twist} for the map $f_a:L_{p,\q}\to\mathbb{S}^{2n+1}$, rather than for $F_a:\mathbb{S}^{2n+1}\to \mathbb{S}^{2n+1}$.

\begin{cor}\label{c:ext_multi_twist}
Let $f_a:L_{p,\q}\to \mathbb{S}^{2n+1}$ be the multi-twist map defined in \eqref{eq:multi_twist}. Furthermore, fix:
\begin{enumerate}
\item A point $x_*=\pi(r_{1*}e^{\ii \vartheta_{1*}},\ldots,r_{n+1*}e^{\ii \vartheta_{n+1*}})\in L_{p,\q}\setminus B_{f_a}$,

\item an open set $U\subset L_{p,\q}$  containing $x_*$,
\item a ball $B=B(x_*,R)\subset U$.
\end{enumerate}
Assume furthermore that $\overline{B}(f_a(x_*),R)\subset f_a(U)\subset \mathbb{S}^{2n+1}$ and $R<R_0$.

Then there exists a quasiregular RBL map $g_1: U \to \mathbb{S}^{2n+1}$  and a ball  $B'\subset B$ such that:
\begin{enumerate}
\item $g_1(x_*)=f_a(x_*)$,
\item $g_1|_{B'}$ is an isometry (conformal)
\item $g_1|_{U\setminus B}=f_a|_{U\setminus B}$.
\end{enumerate}
Here, $R_0$ and the bound for the radius of $B'$ depend only on $r_{1*}, \cdots, r_{n+1*}$.
\end{cor}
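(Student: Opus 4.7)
The strategy is to lift the problem to the sphere via the covering map $\pi: \mathbb{S}^{2n+1} \to L_{p,\q}$, apply Lemma \ref{l:ext_multi_twist} there, and then push the modified map back down to $L_{p,\q}$. Concretely, pick a preimage
\[
z_* = (r_{1*}e^{\ii \vartheta_{1*}}, \ldots, r_{n+1*}e^{\ii \vartheta_{n+1*}}) \in \pi^{-1}(x_*) \subset \mathbb{S}^{2n+1}.
\]
Since $\pi$ is a local isometry and $F_a = f_a \circ \pi$ near $z_*$, the hypothesis $x_* \notin B_{f_a}$ gives $z_* \notin B_{F_a}$. The deck group $\langle R_{p,\q}\rangle$ acts freely on the compact sphere $\mathbb{S}^{2n+1}$, so $\pi$ has a uniform positive injectivity radius $\rho_0$.

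Next, I choose $R_0$ to be the minimum of the $R_0$ furnished by Lemma \ref{l:ext_multi_twist} (which only depends on $r_{1*}, \ldots, r_{n+1*}$) and the injectivity radius $\rho_0$. For $R < R_0$, the component $\tilde U$ of $\pi^{-1}(U)$ containing $z_*$ is mapped isometrically by $\pi$ onto $U$, and $B(z_*,R)$ lifts the ball $B(x_*,R)$ isometrically. Since $f_a = F_a \circ (\pi|_{\tilde U})^{-1}$, we have $F_a(z_*) = f_a(x_*)$ and $F_a(\tilde U) = f_a(U)$, so the inclusion $\overline{B}(F_a(z_*),R) \subset F_a(\tilde U)$ holds as required in Lemma \ref{l:ext_multi_twist}.

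Applying Lemma \ref{l:ext_multi_twist} to $F_a$ on $\tilde U$ with the ball $B(z_*,R)$, I obtain a quasiregular RBL map $G_1: \tilde U \to \mathbb{S}^{2n+1}$ and a ball $\tilde B' = B(z_*,r) \subset B(z_*,R)$ with $G_1(z_*) = F_a(z_*)$, $G_1|_{\tilde B'}$ an isometry, and $G_1 = F_a$ on $\tilde U \setminus B(z_*,R)$. Define
\[
g_1 := G_1 \circ (\pi|_{\tilde U})^{-1}: U \longrightarrow \mathbb{S}^{2n+1}, \qquad B' := \pi(\tilde B') \subset B.
\]
Because $(\pi|_{\tilde U})^{-1}$ is an isometry (hence conformal and RBL), Propositions \ref{p:qsp_comp} and the definition of quasiregularity guarantee that $g_1$ is quasiregular and RBL. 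The three required properties follow immediately: $g_1(x_*) = G_1(z_*) = F_a(z_*) = f_a(x_*)$; on $B'$ the map $g_1$ is a composition of two isometries; and on $U \setminus B$ we have $g_1 = G_1 \circ (\pi|_{\tilde U})^{-1} = F_a \circ (\pi|_{\tilde U})^{-1} = f_a$.

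The only real subtlety is uniformity: the dependence of $R_0$ on just $r_{1*}, \ldots, r_{n+1*}$ is inherited from Lemma \ref{l:ext_multi_twist}, while the constraint $R < \rho_0$ is absorbed into this by shrinking $R_0$ if necessary (using that the $r_{i*}$ already determine the distance from $z_*$ to the other sheets up to the action of the rotation isometries fixing all $r_i$). So no genuinely new obstacle arises; the bulk of the work has been done in Lemma \ref{l:ext_multi_twist}, and the present statement is obtained by transporting its conclusion through the local isometry $\pi$.
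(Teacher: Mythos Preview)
Your overall strategy matches the paper's: lift to $\mathbb{S}^{2n+1}$ via a local inverse of $\pi$, apply Lemma~\ref{l:ext_multi_twist} there, and push back down. However, there is a gap in the execution. You assert that ``the component $\tilde U$ of $\pi^{-1}(U)$ containing $z_*$ is mapped isometrically by $\pi$ onto $U$'', but the open set $U$ in the hypotheses is arbitrary and is in no way constrained by the condition $R<R_0$; for instance $U$ could be all of $L_{p,\q}$, in which case $\pi^{-1}(U)=\mathbb{S}^{2n+1}$ is connected and $\pi|_{\tilde U}$ is certainly not injective. Consequently the expression $(\pi|_{\tilde U})^{-1}$ need not make sense, and you cannot define $g_1$ on all of $U$ by the formula $G_1\circ(\pi|_{\tilde U})^{-1}$. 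Relatedly, the claim $F_a(\tilde U)=f_a(U)$ used to verify the hypothesis $\overline B(F_a(z_*),R)\subset F_a(\tilde U)$ rests on the same unjustified injectivity.

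The paper's proof avoids this by working not on a lift of $U$ but on a small ball $V$ around $z_*$ on which $\pi$ \emph{is} an isometry onto its image (with $\pi(V)\subset U$). One applies Lemma~\ref{l:ext_multi_twist} on $V$, sets $g_1:=G_1\circ(\pi|_V)^{-1}$ on $\pi(V)$, and then \emph{extends $g_1$ by $f_a$ on $U\setminus\pi(V)$}; this last gluing step, which uses that $G_1=F_a$ outside $B(z_*,R)$, is precisely what your argument is missing. Once you restrict the lift to such a $V$ and add the extension by $f_a$, your proof becomes essentially identical to the paper's.
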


\begin{proof}
Let $V$ be a maximal open ball around $z_*=(r_{1*}e^{\ii \vartheta_{1*}},\ldots,r_{n+1*}e^{\ii \vartheta_{n+1*}})$ such that $\pi(V)\subset U$ and $\pi\vert_V$ is an isometry onto its image.
 We now apply Lemma \ref{l:ext_multi_twist} to $V$, $z_*$, and $F_a$. This provides us with a bound $R_0$. Making $R_0$ smaller if necessary, we may assume that $B(z_*,R)\subset V$ for all $R<R_0$ and thus $B\subset \pi(V)$ for such radii. Then the map $g_1:= G_1 \circ (\pi|_V)^{-1}$ is the desired modification in a neighborhood of $B$, and we may extend $g_1$ by $f_a$ in $U\setminus \pi(V)$.
\end{proof}

\begin{lemma}\label{l:inv_sphere}
Let $\mathbb{S}^{2n+1}$ be the standard sub-Riemannian sphere. Let $U$ be an open neighborhood of a given point $z_0$ in $\mathbb{S}^{2n+1}$. Then there exists an open set $B\subset \overline{B}\subset U$ and a conformal and RBL `inversion' map $\iota:\mathbb{S}^{2n+1}\to \mathbb{S}^{2n+1}$ satisfying:
\begin{enumerate}
\item $\iota^2 = \mathrm{id}$
\item $\iota(B)=\mathbb{S}^{2n+1}\setminus \overline{B}$.
\end{enumerate}
\end{lemma}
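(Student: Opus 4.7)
The plan is to realize $\iota$ as a conjugate of the antipodal map on $\Sp^{2n+1}$ by a loxodromic conformal transformation. First, using the transitive isometric action of $U(n+1)$ on $\Sp^{2n+1}$ recalled in Example~\ref{ex:rot}, pick $R \in U(n+1)$ with $R(p) = z_0$, where $p = (1, 0, \ldots, 0)$. It suffices to exhibit an involution $\iota_p$ and open set $B_p$ with the required properties for the point $p$ and neighborhood $R^{-1}(U)$, since $\iota := R \circ \iota_p \circ R^{-1}$ and $B := R(B_p)$ will then work for $z_0$ and $U$.

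Now consider the antipodal map $A(z) = -z$: an isometric involution of $\Sp^{2n+1}$ that swaps the open hemispheres $H_{\pm} := \{z \in \Sp^{2n+1} : \pm \Re(z_1) > 0\}$, which satisfy $H_- = \Sp^{2n+1} \setminus \overline{H_+}$. Using the loxodromic conformal map $T_d$ from Example~\ref{ex:loxodromic}, with $d > 0$ to be chosen, I define
\[
\iota_p := T_d \circ A \circ T_d^{-1}, \qquad B_p := T_d(H_+).
\]
Since $A^2 = \mathrm{id}$ we have $\iota_p^2 = \mathrm{id}$; and since $T_d$ is a homeomorphism of $\Sp^{2n+1}$,
\[
\iota_p(B_p) = T_d(A(H_+)) = T_d(H_-) = \Sp^{2n+1} \setminus T_d(\overline{H_+}) = \Sp^{2n+1} \setminus \overline{B_p}.
\]

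For $d$ sufficiently large, the compact set $\overline{H_+}$---which contains the attracting fixed point $p$ of $T_d$ but not the repelling fixed point $-p$---is mapped by $T_d$ into an arbitrarily small neighborhood of $p$ (cf.\ the contraction property recorded in Example~\ref{ex:loxodromic}), so $\overline{B_p} \subset R^{-1}(U)$. Finally, $\iota_p$ is a smooth conformal diffeomorphism of $\Sp^{2n+1}$ as a composition of such maps; it preserves the orientation because $T_d$ is complex-analytic on $\mathbb{C}^{n+1}$ and the antipodal map has degree $(-1)^{2n+2} = +1$ on the odd-dimensional sphere. As a conformal diffeomorphism of the compact contact manifold $\Sp^{2n+1}$, $\iota_p$ preserves the contact form up to scale, hence is a sense-preserving contactomorphism, and Corollary~\ref{c:contacto_bld} yields that it is BLD, RBL, and quasiconformal; the same follows for $\iota$ by conjugation with the isometry $R$. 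I do not anticipate a serious obstacle; the key observation is that conjugation by $T_d$ turns the hemisphere-swapping antipodal map into an involution swapping an arbitrarily small set near $z_0$ with its complement.
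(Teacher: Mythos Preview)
Your proof is correct and follows exactly the approach indicated in the paper: conjugating the antipodal map $\iota_0(z)=-z$ by suitable conformal maps of $\Sp^{2n+1}$, namely a unitary rotation (Example~\ref{ex:rot}) to center at $z_0$ and a loxodromic $T_d$ (Example~\ref{ex:loxodromic}) to shrink the hemisphere into $U$. The paper's own proof is a one-sentence sketch of precisely this construction, so your write-up simply supplies the details that the paper leaves to the reader.
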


\begin{proof}
The map $\iota$ can be obtained by composing the antipodal map
\(
&\iota_0:\mathbb{S}^{2n+1} \to \mathbb{S}^{2n+1}\\
&\iota_0(z_1,\ldots,z_{n+1})=(-z_1,\ldots,-z_{n+1})
\)
with appropriate conformal maps of the sphere (see Examples \ref{ex:rot} and \ref{ex:loxodromic}).
\end{proof}

\subsection{Proof of Theorem \ref{thm:intro:QRexists}}
We are now ready to prove the first of our two main theorems. For an overview of the method, see Example \ref{ex:conformalTrap}. A diagram of mappings involved in the construction is provided in Figure \ref{fig:QRexists}.
\begin{proof}[Proof of Theorem \ref{thm:intro:QRexists}]
Recall that  Theorem \ref{thm:intro:QRexists} claims that every lens space $L_{p,\q}$ admits a nontrivial UQR mapping $g$ with nonempty branch set. Fix $L_{p,\q}$ and a multiple $a$ of $p$.

The map $g$ will be obtained by first modifying the multi-twist map $f:=f_a:L_{p,\mathbf{q}}\to \mathbb{S}^{2n+1}$ defined in \eqref{eq:multi_twist_lens} by applying the flow method on coordinate patches (which we may do since the projection $\pi: \Sp^{2n+1}\rightarrow L_{p,\q}$ is isometric) and then composing the resulting map
$g_1:L_{p,\mathbf{q}}\to \mathbb{S}^{2n+1}$ with an appropriate inversion $\iota:\mathbb{S}^{2n+1}\to \mathbb{S}^{2n+1}$  and the projection $\pi$, so that $g:= \pi \circ \iota \circ g_1$. All appearing mappings are radially bilipschitz (RBL, see \S \ref{sec:QSP}) and hence the same holds true for the composition.

We will, at the same time, obtain a UQR map $G:\mathbb{S}^{2n+1}\to \mathbb{S}^{2n+1}$, defined as $G:= \iota \circ g_1 \circ \pi$, and satisfying the condition
\begin{equation*}
\pi \circ G = g \circ \pi.
\end{equation*}

The idea for this construction stems from \cite{MR1454921,IM2}. In contrast to the situation considered there, the map $f$ which we have to modify has different source and target spaces. The reason why we cannot work directly with the self-map $\pi \circ f$ of $L_{p,\q}$ is that we need to invert, and conformal inversions are available on $\mathbb{S}^{2n+1}$ but not on $L_{p,\q}$. An analogous variant of the original construction appears in \cite{MR1718708}.

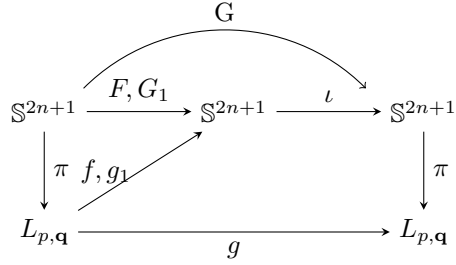
\begin{figure}
\begin{tikzpicture}
  \matrix (m) [matrix of math nodes,row sep=3em,column sep=4em,minimum width=2em]
  {
   \Sp^{2n+1}  & \Sp^{2n+1} &\Sp^{2n+1} \\
   L_{p,\q} & &L_{p,\q}\\};
  \path[-stealth]
  (m-1-1)
	     edge node [above] {$F, G_1$} (m-1-2)
	     edge node [right] {$\pi$} (m-2-1)
   (m-2-1)
            edge node [left] {$f, g_1$} (m-1-2)
	      edge node [below] {$g$} (m-2-3)
  (m-1-2)
 	      edge node [above	] {$\iota$} (m-1-3)
  (m-1-3)
	     edge node [right] {$\pi$} (m-2-3);

  \node[anchor=east] at (2,1) (text) {};
  \node[anchor=east] at (-2,1) (description) {};
  \draw[->] (description) to [out = 45, in = 135, looseness = 1] node[above]{G} (text) ;
\end{tikzpicture}
\caption{Mappings involved in the proof of Theorem \ref{thm:intro:QRexists}.}
\label{fig:QRexists}
\end{figure}

We start by picking a point $x_0 \in L_{p,\q}$ which lies outside $B_f$, is not fixed by $\pi \circ f$ and for which $(\pi \circ f)^{-1}\{x_0\}$ does not meet $B_{\pi \circ f}$. This is generally possible for non-constant quasiregular mappings since such a map is discrete and its branch set is closed and of zero measure. Here we can also make an explicit choice, for instance
\begin{equation*}
x_0 := \pi\left(\tfrac{1}{\sqrt{n+1}}e^{\frac{\ii\pi}{a}},\ldots,\tfrac{1}{\sqrt{n+1}}e^{\frac{\ii\pi}{a}}\right)
\end{equation*}
will work. Let $z_0 \in \mathbb{S}^{2n+1}$ be a point with $\pi(z_0)=x_0$. The inversion $\iota$ will take place at the boundary of a small neighborhood of $z_0$.

The assumption that $x_0$ is not fixed by $\pi \circ f$ ensures that $f(x_0)\neq z_0$. Since quasiregular maps  are discrete and the manifolds under consideration are compact, the preimage of $z_0$ under the map $f$ consists of finitely many points
\begin{equation*}
f^{-1}\{z_0\}=\{x_1,\ldots,x_N\}.
\end{equation*}
For the above choice of $x_0$, this can again be seen directly and one finds that $N=a^{n+1}/p$. As $\pi \circ f$ does not fix $x_0$, the point $x_0$ is distinct from $x_i$ for $i \in \{1,\ldots,N\}$.

Let $U$ and $V$ be disjoint small open neighborhoods about $z_0$ and $f(x_0)$ in $\mathbb{S}^{2n+1}$, respectively, so that
\begin{enumerate}
\item $\pi|_U:U\to \pi(U)$ is injective,
 \item $f^{-1}(U)= \dot{\bigcup}_{i=1}^N U_i$ with $f|_{U_i}:U_i\to U$ homeomorphic for $i\in \{1,\ldots,n\}$,
 \item $f^{-1}(V)$ has a component $V_0$ containing $x_0$ and  $f|_{V_0}:V_0 \to V$ is homeomorphic.
\end{enumerate}
This is possible since $x_0 \notin B_f$, $x_1,\ldots,x_N \notin B_{\pi \circ f}$ and since $f$ is a discrete map between compact manifolds. Notice that $U\cap V = \emptyset$ ensures that $V_0$ does not intersect any of the sets $U_i$.

Choose $R>0$ so that:
\begin{enumerate}
\item For each $i=0, \ldots, N$, $R<R_0(f,x_i)$, where $R_0(f,x_i)$ is the constant provided by Corollary \ref{c:ext_multi_twist} for the map $f$ and point $x_i$,
\item $\overline B_0:=\overline B(x_0, R) \subset V_0$,
\item $\overline B_i:=\overline B(x_i, R) \subset U_i$, for $1\leq i \leq N$,
\item $\overline B(z_0, R) \subset U$,
\item $\overline B(x_0, R) \subset V$.
\end{enumerate}

Corollary \ref{c:ext_multi_twist} then ensures that there exist balls $B_0'\subset B_0$ and $B_i'\subset B_i$, $i\in \{1,\ldots,N\}$ so that we
may modify $f$ in such a way that the new map $g_1$ is quasiregular and RBL
and is given by
\begin{equation*}
g_1:=\left\{\begin{array}{ll}\text{an isometry (conformal)}&\text{on each }B_i'\\\text{quasiconformal interpolation}&\text{on }V_0 \setminus B_0'\text{ and on each }U_i\setminus B_i'\\f&\text{on }L_{p,\q}\setminus \left(V_0 \cup \bigcup_{i=1}^N U_i\right).\end{array}\right.
\end{equation*}
We furthermore have that for every $i=1,\ldots,N$, $g_1(B_i')$ is a ball in $\mathbb{S}^{2n+1}$ of some radius $r< R$, centred at $z_0$ and contained in $U$. Likewise $g_1(B_0')$ is a ball of radius $r$  centred at $f(x_0)$ and contained in $V$.

By Lemma \ref{l:inv_sphere}, there exists a domain $B\subset g_1(B_i')$ and a $1$-quasiconformal inversion $\iota:\mathbb{S}^{2n+1}\to \mathbb{S}^{2n+1}$ which exchanges $B$ with $\mathbb{S}^{2n+1}\setminus \overline{B}$. The composition $\iota \circ g_1:L_{p,\q}\to L_{p,\q}$ is then quasiregular as composition of RBL maps with zero measure branch set, and, since $\pi:\mathbb{S}^{2n+1}\to L_{p,\q}$ is quasiregular and RBL, we find that both
\begin{equation*}
g:= \pi \circ \iota \circ g_1:L_{p,\q} \to L_{p,\q}\quad \text{and}\quad G:= \iota \circ g_1 \circ \pi:\mathbb{S}^{2n+1}\to \mathbb{S}^{2n+1}
\end{equation*}
are quasiregular mappings. It remains to verify that they are in fact \textbf{uniformly} quasiregular.

Recall that $g$ and $G$ are related by the identity $\pi \circ G= g \circ \pi$. This implies $\pi \circ G^n = g^n \circ \pi$ for all $n \in \mathbb{N}$ and thus, since $\pi$ has distortion equal to $1$, $K_{G^n}= K_{g^n}$ (using the chain rule and the analytic criterion in Proposition \ref{p:smooth_K_qr}). It is therefore sufficient to verify that $g$ is uniformly quasiregular.

We now compute the distortion $K_{g^n}$ at different points $x$ by analyzing the orbit of $x$ (note that the set $\pi(B)\subset B_0'$ serves as a ``conformal trap''):
\begin{enumerate}
\item
\label{case:trap}
If $x\in \pi(B)$, then $g_1$ is given by an isometry on a neighborhood of $x$. Since $\iota $ and $\pi$ are conformal, we have that $g$ is conformal at $x$.
Moreover, $g_1 x \notin \overline{B}$, thus $\iota g_1 x \in B$ and $g = \pi \iota g_1 x \in \pi(B)$. Hence, points inside $\pi(B)$ are mapped under iteration again into $\pi(B)$ without picking up any distortion.
\item
\label{case:julia}
If $x\in B_i'$ for some $i\in \{1,\ldots,N\}$, then $g_1$ is conformal at $x$ and thus $x$ does not pick up distortion when mapped by $g= \pi \circ \iota \circ g_1$. The image may lie inside or outside the trap $\pi(B)$.
\item
\label{case:distortion}
Finally, if $x \in L_{p,\q}\setminus \left (\pi(B) \cup \bigcup_{i=1}^N B_i'\right)$, then $g_1 x \notin \overline{B}$ and thus $g x \in \pi(B)$. It is possible that some distortion is picked up in this case, but we do not care, since the point has landed in the conformal trap and hence, as in the first case discussed above, no more distortion can be picked up under further iterations.
\end{enumerate}
We thus have that any point $x$ will see distortion at most once in \eqref{case:distortion}, and then transition to either \eqref{case:julia} or \eqref{case:trap} in which no distortion is picked up. Note that a point may repeat \eqref{case:julia} infinitely many times, without transitioning to \eqref{case:trap}; the Julia set consists exactly of points with such behavior. In either case, we have that distortion is seen by a point at most once, so that $K_{g^n} \leq K_{g}$.

We conclude that the maps $g$ and $G$ are UQR, as desired. Lastly, by construction the branch set of $g$ is nonempty and agrees with that of $f_a$. That is (for $a\neq 1$) it is the image under $\pi$ of the set $\{ (z_1, \ldots, z_{n+1})\in \Sp^{n+1} \st z_i=0 \text{ for some $i$}\}$.

The Julia set of the mapping $g$ constructed above is the Cantor set
\begin{equation}\label{eq:julia}
J(g)=\bigcap_{m=1}^{\infty}\bigcup_{n=m}^{\infty}g^{-n}\left(L_{p,\q}\backslash
\pi(B)\right)\subset \cup_{i=1}^N B_i'.
\end{equation}

For the corresponding statement for a mapping acting in $\bar \R^n$, see \cite{IM2}.
The proof of (\ref{eq:julia}) is completely analogous.

\end{proof}

\section{Differentials on sub-Riemannian manifolds}
\label{sec:differential}
In this section, we recall the Pansu and Margulis--Mostow differentials for quasiconformal mappings between sub-Riemannian manifolds. We then prove that the highest and lowest proper values $\lambda_+, \lambda_-$ of this differential satisfy the standard condition $\lambda_+/\lambda_- \leq K$ for a $K$-quasiconformal mapping.

Most of the complexity involved in defining the Gromov--Hausdorff tangent space for sub-Riemannian manifolds, and the Margulis--Mostow derivative on these spaces, is already visible in simpler contexts. We therefore start with a review of classical differentiation from a metric space perspective, in order to establish notation and emphasize certain conceptual difficulties that motivate the later definitions.

\subsection{Derivatives in $\R^n$}
Recall that the derivative is meant to capture the infinitesimal behavior of a mapping. That is, for a mapping $f: X \rightarrow Y$ of Euclidean spaces, one writes
\[
df \vert_p  = \lim_{h\rightarrow \infty} \delta^Y_{h}\circ f \circ \delta^X_{h^{-1}},
\]
provided that this limit converges uniformly on compacts, and is an affine mapping that takes $p$ to $f(p)$. Here, the mappings $\delta^X_h$ and $\delta^Y_h$ are similarities of $X$ and $Y$ fixing $p$ and $f(p)$ and distorting distances by factor $h$.

For Euclidean spaces, it is, of course, standard to choose in both cases the similarity $\delta_h(\vec x) = h \vec x$ (after normalizing so that $p=f(p)=0$). However, the choice is somewhat arbitrary, as one could zoom in to $p$ and $f(p)$ using other similarities.

\begin{example}
\label{ex:nodiff}
Suppose $X$ and $Y$ are both the complex plane, and take, for $h>0$,
\(
&\delta^X_h(z) \coloneqq hze^{\log h\ii}, &\delta^Y_h(z)\coloneqq hz.
\)
With this choice, the identity map $f(z)=z$ is not differentiable at $0$.
\end{example}

\subsection{Riemannian derivatives via embeddings}
\label{sec:Riemanniandf}
Consider now differentiation of a map $f: X \rightarrow Y$ between Riemannian manifolds at a point $p\in X$.

Except in special cases, $X$ and $Y$ do not admit a family of similarities fixing $p$ and $f(p)$, which makes defining a derivative problematic. One can, abstractly, dilate $X$ and $Y$ by considering rescaled metrics on $X$ and $Y$, namely by taking $(X, hd_X)$ and $(Y, hd_Y)$. However, it is not obvious what meaning to assign to the expression
\[\label{eq:blowup}
\lim_{h\rightarrow \infty} f: (X, hd_X) \rightarrow (Y, hd_Y).
\]

One solution is to assume that both $X$ and $Y$ are embedded in $\R^N$ for some $N$, with their Riemannian metrics induced from the Euclidean inner product. This is always possible, and we may further assume $p=f(p)=0$. We may now think of $df\vert_p$ as a mapping between the tangent spaces $T_pX$ and $T_{fp}Y$ that agrees with $f$ to first order. Before we discuss what this means, we need to characterize $T_pX$ and $T_{fp}Y$.

Consider the orthogonal projection $\pi: \R^N \rightarrow T_pY$. The restriction of $\pi$ to $X$ is Lipschitz, but collapses most of the structure of $X$. However, near $p$, the manifold $X$ is relatively flat, so that $\pi\vert_X$ is locally a homeomorphism onto its image. That is, the restriction of $\pi$ to small balls of $X$ is $L$-bilipschitz, with $L$ tending to $1$ as smaller balls are chosen.

It is useful to rephrase this observation. Namely, consider the manifold $\delta_h X$, naturally isometric to $(X, h d_X)$, and let $\pi_h \coloneqq \pi \vert_{\delta_h X}$. Then, for each compact set $K \subset T_pX$, we have that $\pi^{-1}_h: K \rightarrow \delta_h X \approx (X, h d_X)$ is, for sufficiently large $h$, $L$-bilipschitz, with $L$ tending to $1$ as $h\rightarrow \infty$.

Likewise, we have a sequence of dilated-projection maps $\pi_h: (Y, h d_Y) \rightarrow T_{fp}Y$.

We can now define (in agreement with the standard definition) the map $f: X \rightarrow Y$ to be differentiable at $p$ if
\[
df\vert_p \coloneqq \lim_{h \rightarrow \infty} \pi_h \circ f \circ \pi_h^{-1}
\]
converges uniformly on compacts, and is futhermore a linear mapping. Note that this definition makes precise the expression \eqref{eq:blowup}.

\subsection{Gromov--Hausdorff tangent space and derivative}
We now axiomatize the discussion in \S \ref{sec:Riemanniandf}, reformulating ideas of Gromov  \cite{MR623534}. While our definitions are different from Gromov's, they are equivalent and more convenient for our purposes.

Recall that a map $f:X \rightarrow Y$ between metric spaces is an $(L,C)$-quasi-isometry if one has for some $L\geq 1$, $C\geq 0$ and all points $x, x'\in X$
\(
-C+L^{-1}\norm{x-x'} \leq \norm{fx - fx'} \leq L\norm{x-x'}+C
\)
and furthermore the $C$-neighborhood of $f(X)$ in $Y$ is all of $Y$.

\begin{defi}
\label{defi:TX}
Let $(X, d_X)$ be a metric space and $x\in X$. We say that a metric space $\T_xX$ is the \emph{Gromov--Hausdorff tangent space to $X$ at $x$} if there exists a family of mappings $\pi_h: X \rightarrow \T_xX$ (for $h>0$) such that for every compact $K\subset \T_xX$, the inverse mapping $\pi_h^{-1}: K \rightarrow (X, h d_X)$ exists for sufficiently small $h$ and is an $(L, C)$-quasi-isometry onto its image, with $(L, C) \rightarrow (1,0)$ as $h\rightarrow \infty$. We say that $\{\pi_h\}$ is \emph{asymptotically isometric}.
\end{defi}

\begin{remark}Several remarks are in order:
\begin{enumerate}
\item Gromov--Hausdorff tangent spaces exist only in special classes of spaces. The Gromov--Hausdorff tangent space of a Riemannian manifold (at any point) is a Euclidean space of the same dimension, as is evident from \S \ref{sec:Riemanniandf}. The isometry class of $\T_xX$ may vary with $x$.
\item We are only interested in asymptotic properties of $\pi_h$, so in principle the mapping only needs to be defined on some small neighborhood of $x$.
\item One should keep in mind that the sequence $\pi_h$ is asymptotically isometric as a family of mappings to \emph{rescaled} versions of $X$.
\end{enumerate}
\end{remark}

\begin{defi}
Suppose $\T_xX$ is the Gromov--Hausdorff tangent space at $x$ of a metric space $X$. Fix an asymptotically isometric sequence $\pi_h$. We say that a family of points $x_h \in X$ \emph{converges to $v\in \T_xX$} if and only if $\lim_{h \rightarrow \infty} \pi_h(x_h)=v$. Note in particular that this implies that $x_h \rightarrow x$ in $(X,d_X)$.
\end{defi}
\begin{remark}
\label{rmk:curves}
One can use the definition of convergence to identify each point of $\T_xX$ with an equivalence class of ``curves'' in $X$, and write the metric on $\T_xX$ purely in terms of the metric of $X$.
\end{remark}

\begin{defi}
\label{defi:df}
Let $f: X\rightarrow Y$, and fix asymptotically isometric sequences $\pi_h$ at $x$ and $fx$. Then $f$ is \emph{differentiable at $x$} if the limit
\[
df\vert_x \coloneqq \lim_{h \rightarrow \infty} \pi_h \circ f \circ \pi_h^{-1}
\]
converges uniformly on compacts.
\end{defi}

\begin{remark}
In certain settings, one further requires that the derivative $df$ satisfies additional conditions, e.g.\ linearity.
\end{remark}

The following results are immediate:

\begin{lemma}[Curve interpretation of the derivative]
\label{lemma:curve}
Suppose $f$ is differentiable at $x$ and we have a family of points $\{x_h\}\in X$ with $x_h \rightarrow v \in \T_xX$. Then $f(x_h) \rightarrow df(v)$.
\end{lemma}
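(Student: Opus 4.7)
My proof plan is to unwind the three definitions involved (convergence of $x_h$ to a tangent vector, differentiability of $f$, and the interpretation of $df\vert_x$) and reduce the statement to a triangle-inequality argument combined with continuity of $df\vert_x$.

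Let $\pi_h^X$ and $\pi_h^Y$ denote the asymptotically isometric families at $x$ and $fx$ respectively, and set $v_h := \pi_h^X(x_h)$. By the definition of convergence into $\T_xX$, the hypothesis $x_h \to v$ reads as $v_h \to v$ in $\T_xX$, and the conclusion $f(x_h) \to df\vert_x(v)$ reads as $\pi_h^Y(f(x_h)) \to df\vert_x(v)$ in $\T_{fx}Y$. Writing $F_h := \pi_h^Y \circ f \circ (\pi_h^X)^{-1}$, which is precisely the family whose uniform-on-compacts limit defines $df\vert_x$, one then has the tautological identity
\[
\pi_h^Y(f(x_h)) = F_h(v_h).
\]

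The set $K := \{v\} \cup \{v_h : h \ge h_0\}$ is compact in $\T_xX$ for $h_0$ large enough, being a convergent sequence together with its limit. Applying the uniform convergence $F_h \to df\vert_x$ on $K$ gives
\[
d_{\T_{fx}Y}\bigl(F_h(v_h),\ df\vert_x(v_h)\bigr) \le \sup_{w \in K} d_{\T_{fx}Y}\bigl(F_h(w),\ df\vert_x(w)\bigr) \longrightarrow 0.
\]
So by the triangle inequality it suffices to establish $df\vert_x(v_h) \to df\vert_x(v)$, i.e.\ continuity of the derivative at $v$.

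The only delicate point is exactly this continuity: Definition \ref{defi:df} on its own does not guarantee it (as acknowledged in the remark following that definition). I would therefore invoke the additional structure available in the settings of interest. In the classical Riemannian case $df\vert_x$ is linear; in the Margulis--Mostow / Pansu setting that drives the remainder of the section, it is a homomorphism of graded (Carnot) groups, hence continuous (in fact Lipschitz). With that input everything else is bookkeeping, which is presumably why the authors list the lemma among statements that are ``immediate.''
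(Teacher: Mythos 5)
Your proof is correct and is exactly the unwinding of Definitions 5.4--5.6 that the paper has in mind: the authors give no argument at all, simply declaring the lemma ``immediate,'' and your identity $\pi_h^Y(f(x_h)) = F_h(v_h)$ together with uniform convergence on the compact set $\{v\}\cup\{v_h\}$ is the intended bookkeeping. You are also right to flag continuity of $df\vert_x$ as the one non-tautological input; in the Margulis--Mostow setting of Definition \ref{defi:MMdf} the derivative is required to be a homothety-equivariant homomorphism of Carnot groups, which supplies it.
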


\begin{lemma}[Chain rule]
\label{lemma:chain}
Suppose $f: X \rightarrow Y$ is differentiable at $x\in X$, and $g: Y \rightarrow Z$ is differentiable at $f(x)\in Y$. Then $g \circ f$ is differentiable at $x$, with derivative $dg\vert_{fx} \circ df\vert_x$.
\end{lemma}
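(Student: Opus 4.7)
The heart of the argument is the algebraic identity
\begin{equation*}
\pi_h^Z \circ (g \circ f) \circ (\pi_h^X)^{-1} \;=\; \bigl(\pi_h^Z \circ g \circ (\pi_h^Y)^{-1}\bigr) \;\circ\; \bigl(\pi_h^Y \circ f \circ (\pi_h^X)^{-1}\bigr),
\end{equation*}
where $\pi_h^X$, $\pi_h^Y$, $\pi_h^Z$ denote the chosen asymptotically isometric families realizing the tangent spaces at $x$, $f(x)$, and $g(f(x))$ respectively. Writing $F_h$ and $G_h$ for the two parenthesized factors, the differentiability hypotheses say $F_h \to F := df|_x$ uniformly on compacts of $\T_xX$, and $G_h \to G := dg|_{fx}$ uniformly on compacts of $\T_{fx}Y$. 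The task reduces to proving that $G_h \circ F_h \to G \circ F$ uniformly on each compact $K \subset \T_xX$.

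The plan would be a standard three-term triangle estimate. Fix a compact $K \subset \T_xX$. Since $F(K)$ is bounded in $\T_{fx}Y$ (bounded as the image of a compact set under the uniform limit of quasi-isometric approximations) and $F_h \to F$ uniformly on $K$, I can choose a compact set $K' \subset \T_{fx}Y$ and an index $h_0$ so that $F_h(K) \subset K'$ and $F(K) \subset K'$ for all $h \geq h_0$. Then for $v \in K$,
\begin{equation*}
d\bigl(G_hF_h(v),\, GF(v)\bigr) \;\leq\; \underbrace{d\bigl(G_hF_h(v),\, GF_h(v)\bigr)}_{\text{(I)}} \;+\; \underbrace{d\bigl(GF_h(v),\, GF(v)\bigr)}_{\text{(II)}}.
\end{equation*}
Term (I) is bounded uniformly in $v$ by $\sup_{w \in K'} d(G_h(w), G(w))$, which tends to $0$ by the uniform convergence $G_h \to G$ on $K'$. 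Term (II) tends to $0$ uniformly in $v$ provided $G$ is uniformly continuous on the compact $K'$, using that $F_h \to F$ uniformly on $K$.

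The main obstacle is establishing that $G$ (and, symmetrically, $F$) is indeed continuous on compact subsets. The key input is that the families $\{\pi_h\}$ are asymptotically isometric with parameters $(L,C) \to (1,0)$: unwinding the definition, for $w,w' \in K'$ and $h$ large one has
\begin{equation*}
d\bigl(G_h(w), G_h(w')\bigr) \;\leq\; L_h \cdot h\,d_Z\bigl(g(\pi_h^{-1,Y}w),\, g(\pi_h^{-1,Y}w')\bigr) + C_h,
\end{equation*}
and the points $\pi_h^{-1,Y}(w), \pi_h^{-1,Y}(w')$ collapse towards $f(x)$ at rate $1/h$ with mutual distance asymptotic to $d(w,w')/h$. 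Combined with any local Lipschitz-type control on $g$ near $f(x)$ (which itself follows from the differentiability of $g$ at $f(x)$, since a uniform limit on compacts of near-isometric conjugates of $g$ forces a quantitative expansion bound in a small neighborhood of $f(x)$), this yields a Lipschitz bound on $G_h$ uniformly in large $h$, and hence on $G$. Uniform continuity of $G$ on $K'$ then feeds into term (II), completing the estimate and establishing $d(g\circ f) = dg|_{fx} \circ df|_x$.
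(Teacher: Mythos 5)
The paper itself offers no argument for this lemma (it is declared ``immediate''), and your factorization $\pi_h^Z\circ(g\circ f)\circ(\pi_h^X)^{-1}=G_h\circ F_h$ together with the two-term triangle estimate is exactly the standard argument the authors intend; you are also right to flag the domain issue, i.e.\ that one must first arrange $F_h(K)\subset K'$ for large $h$ so that $(\pi_h^Y)^{-1}\circ\pi_h^Y$ cancels on the relevant set and the factorization is legitimate.

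The one step that fails as written is your justification of the continuity of $G$: you assert that differentiability of $g$ at $f(x)$ ``forces a quantitative expansion bound in a small neighborhood of $f(x)$,'' i.e.\ local Lipschitz-type control on $g$ itself. That implication is false already for $X=Y=\R$ with $\pi_h=\delta_h$: the function $g(y)=y+y^2\sin(1/y^2)$, $g(0)=0$, satisfies $h\,g(\cdot/h)\to\mathrm{id}$ uniformly on compacts, so it is differentiable at $0$ in the sense of Definition~\ref{defi:df}, yet its derivative is unbounded near $0$ and it is Lipschitz in no neighborhood of $0$. Fortunately you do not need Lipschitz control on $g$; you only need (uniform) continuity of the limit $G$ on the compact $K'$, and this is available by two softer routes. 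First, in the paper's construction the maps $\pi_h$ arise from privileged coordinate charts composed with dilations, hence are homeomorphisms onto their images; since the maps to which the lemma is applied are continuous (quasiregular), each $G_h$ is continuous on $K'$, and a uniform-on-compacts limit of continuous maps is continuous, hence uniformly continuous on $K'$. Second, for the Margulis--Mostow derivative (Definition~\ref{defi:MMdf}) the limit is by definition a homothety-equivariant homomorphism of Carnot groups, hence Lipschitz. With either substitution term (II) goes through and your argument is complete; in the Margulis--Mostow setting one should also record that $G\circ F$ is again a homothety-equivariant homomorphism, which is immediate.
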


\subsection{Riemannian tangent spaces via coordinate charts}
\label{sec:Riemannian-chart}
We will be interested in derivatives of mappings between sub-Riemannian manifolds. In that context, the Nash embedding theorem is not available, so the Gromov--Hausdorff tangent space is found by working in charts. In preparation for the sub-Riemannian construction, we first construct Riemannian tangent spaces in a new way.

Let $X$ be a Riemannian manifold of dimension $n$, and $x\in X$. Since $X$ is a smooth manifold, there is a diffeomorphism from a small neighborhood of $x$ to an open set $U$ in $\R^n$, which we denote by $\pi_1$. Composing with an affine transformation if necessary, we may assume $\pi_1(x)=0$, and that the Riemannian inner product at $x$ agrees with the standard inner product on $\R^n$.

By the smoothness of the Riemannian metric, we have that for sufficiently small balls $K$ around $0$, the map $\pi_1^{-1}\vert_K$ is an $(L,1)$-quasi-isometry onto its image, with $L \rightarrow 1$ as the diameter of $K$ vanishes.
It is then clear that $\pi_h \coloneqq \delta_h \circ \pi_1$ is an asymptotically isometric family of mappings from $(X, hd_X)$ to $\R^n$, so that we have $\R^n = \T_xX$.

\begin{remark}To agree with previous notation, we may extend $\pi_1$ to a map on all of $X$ by sending unassigned points to an arbitrary nonzero point in $\R^n$.
\end{remark}

\subsection{Sub-Riemannian derivatives}
We are now ready to discuss derivatives in the sub-Riemannian setting. We start with:
\begin{thm}[Mitchell \cite{Mitchell}]
\label{thm:Mitchell}
Let $M$ be an equiregular sub-Riemannian manifold, and $p\in M$. Then the Gromov--Hausdorff tangent space $\T_pM$ exists and is isometric to a Carnot group.
\end{thm}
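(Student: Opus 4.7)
The plan is to prove Mitchell's theorem by explicitly constructing the asymptotically isometric family $\pi_h$ through a privileged coordinate chart, and identifying the limit with the nilpotentization of the sub-Riemannian structure at $p$.

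First, I would choose a local frame adapted to the filtration. Since $M$ is equiregular, there is a neighborhood $U$ of $p$ on which smooth vector fields $X_1,\dots,X_n$ can be chosen so that $X_1,\dots,X_{r_k}$ span $H^kM$ for each $k$, where $r_k=q_1+\cdots+q_k$. Each $X_i$ receives a weight $w_i\in\{1,\dots,s\}$ equal to the smallest $k$ with $X_i\in H^kM$, and a natural set of dilations on $\R^n$ is defined by $\delta_h(x_1,\dots,x_n)=(h^{w_1}x_1,\dots,h^{w_n}x_n)$.

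Next, I would introduce privileged coordinates (of the second kind) centered at $p$, namely the chart
\(
\phi^{-1}(x_1,\dots,x_n)=\exp(x_n X_n)\circ\cdots\circ\exp(x_1X_1)(p),
\)
which is a local diffeomorphism between a neighborhood of $p$ in $M$ and a neighborhood of $0$ in $\R^n$. The candidate asymptotically isometric family is then $\pi_h:=\delta_h\circ\phi$, viewed as a map into the target metric space $(\R^n,\hat d)$ whose metric $\hat d$ we have yet to construct. The key technical input, which I would invoke rather than reprove, is the ball--box theorem: there are constants $C\geq 1$ and $r_0>0$ such that, writing $\mathrm{Box}(r)=\prod_i[-r^{w_i},r^{w_i}]$, one has $\phi(B(p,r/C))\subseteq\mathrm{Box}(r)\subseteq\phi(B(p,Cr))$ for $r<r_0$. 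Combined with Mitchell's pointwise expansion of the vector fields in privileged coordinates, this gives that the pushed-forward, rescaled vector fields $\delta_h{}_*(\phi_*X_i)$ converge uniformly on compact subsets of $\R^n$ to polynomial vector fields $\hat X_i$ that are homogeneous of degree $-w_i$ with respect to $\delta_h$.

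I would then identify the limit. The nilpotent Lie algebra generated by $\hat X_1,\dots,\hat X_{q_1}$ is graded by the weights, satisfies Hörmander's condition, and is of step $s$; its simply connected Lie group $G$ is a Carnot group and $\R^n$ is identified with $G$ via exponential coordinates. The inner product $g(p)$ on $H_pM$, transported by the frame to $H_0G=\mathrm{span}(\hat X_1,\dots,\hat X_{q_1})$, is extended left-invariantly to a sub-Riemannian metric on $G$, and $\hat d$ is the associated Carnot--Carathéodory distance. One then verifies that horizontal curves $\gamma$ in $M$ near $p$ correspond, under $\delta_h\circ\phi$, to horizontal curves in $G$ whose Carnot lengths converge to $h\cdot\mathrm{length}_M(\gamma)$ as $h\to\infty$, and conversely that horizontal curves in $G$ can be lifted to almost-horizontal curves in $(M,hd_M)$ with matching length in the limit. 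This gives the two-sided distance estimate
\(
\hat d(\pi_h(x),\pi_h(y))=hd_M(x,y)+o(hd_M(x,y))
\)
uniformly for $x,y$ in $\phi^{-1}(K/h)$ with $K\subset\R^n$ compact, which is exactly the statement that $\pi_h^{-1}:K\to(M,hd_M)$ is an $(L_h,C_h)$-quasi-isometry with $(L_h,C_h)\to(1,0)$ as $h\to\infty$. Hence $\T_pM=G$ as metric spaces.

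The hard part, and the genuine content of Mitchell's theorem, is the uniform convergence in the last step: one must show that the Carnot--Carathéodory distance is genuinely controlled by the anisotropic box coordinates and not merely comparable to them, which requires both the ball--box estimate and a careful analysis of how the nonlinear terms in the vector fields $X_i$ contribute to lengths of horizontal curves at scale $1/h$. The homogeneity of the limit fields under $\delta_h$ is what makes these lower-order terms negligible and isolates the Carnot structure in the limit.
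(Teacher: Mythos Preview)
Your proposal is correct and follows essentially the same route as the paper's treatment: privileged coordinates at $p$, anisotropic dilations $\delta_h$, and the family $\pi_h=\delta_h\circ\phi$ as the asymptotically isometric maps, with the Carnot group arising as the nilpotentization of the rescaled horizontal frame. The paper itself does not prove Mitchell's theorem but only sketches it ``in part to establish notation,'' deferring the actual convergence argument to Mitchell's original work; your outline simply fills in more of the standard machinery (exponential coordinates of the second kind, the ball--box estimate, the explicit $o(hd_M)$ distance comparison) than the paper chooses to record.
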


We now briefly sketch the proof of Theorem \ref{thm:Mitchell}, in part to establish notation. Note that the proof follows the ideas presented in \S \ref{sec:Riemannian-chart}. Consider a Riemannian metric on $M$ whose restriction to $HM$ provides the given sub-Riemannian inner product on $HM$. As in \S \ref{sec:Riemannian-chart}, let $\pi_1: M \rightarrow \R^n$ be a coordinate patch for $M$ that maps $p$ to $0$ and so that the Riemannian metric on $M$ agrees, at the origin, with the standard metric on $\R^n$. We may further assume that \emph{at the origin}, the horizontal bundle $HM$ is spanned by the vectors $\partl{x_1}, \ldots, \partl{x_{q_1}}$; that the bundle $\langle HM, [HM,HM]\rangle$ is spanned by $\partl{x_1}, \ldots, \partl{x_{q_1+q_2}}$, and so on. This allows us to partition the coordinate $( x_1, \ldots,  x_n)$ as $(\vec x_1, \ldots, \vec x_r)$, with vector $\vec x_i$ having length $q_i$. Such coordinates are call \emph{privileged coordinates}. We can define a dilation on $\R^n$ by
\[
\delta_h ( \vec x_1, \ldots, \vec x_r) \coloneqq (h \vec x_1, \ldots, h^r \vec x_r).
\]
Mitchell shows that as one expands the coordinate chart using $\delta_h$, the bundle $HM$ and its metric limit to a new sub-Riemannian metric that is isometric to a Carnot group $G$ (for which $\delta_h$ is a similarity). Furthermore, Mitchell shows that the map $\pi_h \coloneqq \delta_r \circ \pi_1$ is an asymptotic isometry, in the sense of Definition \ref{defi:TX}, so that $G$ is the Gromov--Hausdorff tangent space to $M$ at $p$.

\begin{remark}
\label{remark:HM}
Under the dilations $\delta_h$, the horizontal subspace at $p$,  $H_pM$, is preserved. Furthermore, the metric on $\T_pM$ is the left-invariant metric that agrees with the metric on $H_pM$ at $p$. It follows that the distance $d_{\T_pM}(0, (\vec x_1, 0, \ldots, 0))$ is given by the sub-Riemannian norm $\norm{\vec x_1}_M$, where $x_1$ is thought of as a vector in $H_pM$. It is therefore natural to identify the subspace $\R^{q_1} \subset \T_pM$ with the horizontal subspace $H_pM$.
\end{remark}

Margulis--Mostow used the mappings $\pi_h$ to define differentiation on sub-Rie{\-}mann{\-}ian manifolds (see also Definition \ref{defi:df}):
\begin{defi}[Margulis--Mostow \cite{MargulisMostow95}]
\label{defi:MMdf}
The \emph{differential} $df\vert_p$ of a mapping $f$ between equiregular sub-Riemannian manifolds is the limit
\[
df\vert_p \coloneqq \lim_{h\rightarrow\infty} \pi_h \circ f \circ \pi_h^{-1},
\]
provided it converges uniformly on compacts, is a Lie group homomorphism of the Carnot groups that serve as the Gromov--Hausdorff tangent spaces, and is furthermore equivariant with respect to their homotheties. Here, the mappings $\pi_h$ are defined as above via privileged coordinates at $p$.
\end{defi}

Using the curve interpretation of the tangent space (see \ref{rmk:curves}), Margulis--Mostow showed that the definition is, in fact, independent of the particular choice of privileged coordinates used to define differentiability. They then showed:

\begin{thm}[Margulis--Mostow \cite{MargulisMostow95}]
\label{thm:MM}
Let $f$ be a quasiconformal mapping between equiregular sub-Riemannian manifolds. Then the Margulis--Mostow derivative $df\vert_p$ exists at almost every point $p\in M$. Furthermore,
$df\vert_p$ is an isomorphism  for almost every $p$.
\end{thm}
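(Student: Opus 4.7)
The plan is to reduce the statement to a Rademacher/Pansu-type differentiation theorem by working in privileged coordinate charts on $M$ and $N$. Fix $p \in M$ and privileged coordinates at $p$ and $f(p)$, so that the asymptotically isometric rescalings $\pi_h : M \to \T_p M$ and $\pi'_h : N \to \T_{f(p)} N$ from the proof of Mitchell's theorem (Theorem \ref{thm:Mitchell}) are available. The objects of study are the rescaled maps
\begin{equation*}
f_h \;:=\; \pi'_h \circ f \circ \pi_h^{-1},
\end{equation*}
defined on larger and larger neighborhoods of $0 \in \T_p M$ as $h \to \infty$.

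First, since the pointwise distortion $H(\cdot,f) \leq K$ is scale-invariant and the rescaled metrics on domain and target converge uniformly on compacts to the Carnot group metrics on $\T_p M$ and $\T_{f(p)} N$, the maps $f_h$ are uniformly $K$-quasiconformal on compact sets with respect to suitable reference metrics. After normalising so that $f_h(0)=0$, standard equicontinuity and normality theorems for quasiconformal families yield precompactness of $\{f_h\}_{h>0}$ in the topology of uniform convergence on compact sets, so that subsequential limits $f_\infty : \T_pM \to \T_{f(p)}N$ exist.

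Second, any such $f_\infty$ must be a graded Lie group homomorphism. Using $\pi_{hs} = \delta_s \circ \pi_h$ (at leading order in the rescalings), one obtains the structural identity $f_{hs} = \delta_s \circ f_h \circ \delta_s^{-1}$; letting $h\to\infty$ gives $f_\infty \circ \delta_s = \delta_s \circ f_\infty$ for every $s>0$. Combined with quasiconformality and the Carnot group structure on the tangent spaces, dilation-equivariance forces $f_\infty$ to be a graded homomorphism of Carnot groups in the sense of Definition \ref{defi:MMdf}.

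The third step, which I expect to be the main obstacle, is to show that the subsequential limit is unique at almost every $p$, thereby upgrading subsequential convergence to the genuine limit $df|_p$ required by the definition. The plan is to apply Pansu's differentiation theorem to any blowup $f_\infty$ (itself a quasiconformal map between Carnot groups) at a Pansu-generic point, and then use a Lebesgue-density argument — relying on the measurability of the candidate-derivative map established in \S\ref{sec:bundles} — to transfer Pansu-differentiability of $f_\infty$ back to Margulis--Mostow differentiability of $f$ at $p$. This is the delicate part, since it must interact correctly with the varying Carnot structure over different base points. Finally, to obtain the isomorphism claim, apply the existence result just proved also to $f^{-1}$, which is $K$-quasiconformal by Corollary 6.4 of \cite{MargulisMostow95}, use Lusin's condition (Lemma \ref{l:lusin}) to align a common full-measure set of points where both $df|_p$ and $d(f^{-1})|_{f(p)}$ exist, and invoke the chain rule (Lemma \ref{lemma:chain}) to conclude that $d(f^{-1})|_{f(p)} \circ df|_p = \mathrm{id}_{\T_pM}$, so that $df|_p$ is an isomorphism.
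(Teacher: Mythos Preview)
The paper does not prove this theorem at all: it is stated with attribution to Margulis--Mostow \cite{MargulisMostow95} and used as a black box, so there is no ``paper's own proof'' to compare your proposal against. Your sketch is therefore an attempt to reconstruct the Margulis--Mostow argument, not to match anything in the present paper.

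That said, your outline has two real gaps. First, there is a circularity: you invoke ``the measurability of the candidate-derivative map established in \S\ref{sec:bundles}'' to carry out the Lebesgue-density step, but Lemma~\ref{lemma:UQRdf} in that section explicitly relies on Theorem~\ref{thm:MM} (via Theorem~\ref{thm:QRdf}) as an input. You cannot use it here. Second, the claim in your second step that ``dilation-equivariance forces $f_\infty$ to be a graded homomorphism of Carnot groups'' is precisely the nontrivial content of Pansu-type differentiation and is not a formal consequence of $\delta_s$-equivariance plus quasiconformality; there exist dilation-equivariant quasiconformal self-maps of Carnot groups that are not homomorphisms (any $1$-homogeneous quasiconformal map would qualify). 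The Margulis--Mostow argument does not proceed via a blowup/normal-family scheme of this kind: it first proves absolute continuity along horizontal curves (their Theorem~7.1, cited in the paper before Lemma~\ref{l:lusin}) and then establishes the existence of the differential directly from this ACL-type regularity, using the curve interpretation of the tangent space alluded to just before the statement of Theorem~\ref{thm:MM}. Your step~4, which you correctly identify as the crux, would in effect have to redo all of that work, and the plan you give (apply Pansu to a blowup, then transfer back by density) is too vague to constitute a proof.
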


We will strengthen Theorem \ref{thm:MM} somewhat in the remainder of this section and in \S \ref{sec:invariant}. We start with a restriction on the ratio of ``proper values'' of $df\vert_p$:

\begin{lemma}
\label{lemma:df-qc}
Let $f: M \rightarrow N$ be a $C$-quasiconformal mapping between equiregular sub-Riemannian manifolds. If $f$ is differentiable at a point $p$ with injective derivative, then the derivative $df\vert_p: \T_pM \rightarrow \T_{fp}N$ satisfies for all points $u,v \in \T_pM$ at distance $1$ from $0$ (with respect to $d_{\T_pM}$):
\(
\frac{d_{\T_pN}(0, df\vert_p u)}{d_{\T_pN}(0, df\vert_p v)} \leq C \text{ }.
\)
\begin{proof}
Fix a sequence of asymptotically isometric mappings $\pi_h$, as in Definition \ref{defi:MMdf}, and let, for each $h$,
\(
&u_h \coloneqq \pi_h^{-1}u &v_h \coloneqq \pi_h^{-1}v
\)
We have, by definition of asymptotically isometric families, for some $(L, K)$ depending on $h$:
\(
-K + L^{-1} d_{\T_pM}(0,u) \leq h d_M(p, u_h) \leq Lh d_{\T_pM}(0,u) + K
\)
We also have a corresponding inequality for each $v_h$.

We would like to further assume that $d_M(p, u_h) = d_M(p, v_h)$ for each $h$. This can be enforced using a small perturbation along geodesics joing $u_h$ and $v_h$ to $p$. It is easy to see that the resulting points, which we continue to denote $u_h$ and $v_h$ converge to $u$ and $v$, respectively.

We conclude from the definition of differentiability (see Lemma \ref{lemma:curve}) that $f(u_h)$ and $f(v_h)$ converge, respectively, to $df(u)$ and $df(v)$.
\(
\frac{d_{\T_pN}(0, df\vert_p u)}{d_{\T_pN}(0, df\vert_p v)} \leq \frac{L h d_N(fp, fu_h) +K }{L^{-1} h d_N(fp, fv_h)-K}
\)

Furthermore, we have that $d_M(p, u_h) = d_M(p, v_h)$ goes to $0$ as $h \rightarrow \infty$. In particular, by the definition of quasiconformality, for each $\epsilon$ and sufficiently large $h$ we have:
\(
\frac{d_N(fp, fu_h)}{d_N(fp, fv_h)} \leq C+\epsilon.
\)

We continue the above estimate:
\(
\frac{d_{\T_pN}(0, df\vert_p u)}{d_{\T_pN}(0, df\vert_p v)} &\leq \frac{L h d_N(fp, fu_h) +K }{L^{-1} h d_N(fp, fv_h)-K} \\
&\leq \frac{L h (C+\epsilon)d_N(fp, fv_h)+K}{L^{-1} h d_N(fp, fv_h)-K}
\)
As $h\rightarrow \infty$, we have that $L \rightarrow 1$, $K\rightarrow 0$, and $h d_N(fp, fv_h)\rightarrow d_{\T_{fp}N}(0,df(v))\neq 0$. Thus, the quotient is bounded by $C$, as desired.
\end{proof}
\end{lemma}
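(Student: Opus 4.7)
The plan is to compare the sub-Riemannian distances $d_{\T_{fp}N}(0, df|_p u)$ and $d_{\T_{fp}N}(0, df|_p v)$ by realizing them as limits of distances $h \cdot d_N(fp, fu_h)$ and $h \cdot d_N(fp, fv_h)$, where $u_h, v_h$ are representative points in $M$ lying on a common sphere of shrinking radius $r_h$ around $p$; the quasiconformal distortion bound at $p$ then controls their ratio.

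More precisely, fix asymptotically isometric families $\pi_h^M: M \to \T_pM$ and $\pi_h^N: N \to \T_{fp}N$ in the sense of Definition~\ref{defi:TX}, used to define the Margulis--Mostow derivative at $p$. Given $u,v \in \T_pM$ with $d_{\T_pM}(0,u) = d_{\T_pM}(0,v) = 1$, set $u_h := (\pi_h^M)^{-1}(u)$ and $v_h := (\pi_h^M)^{-1}(v)$ (well-defined for $h$ large). The asymptotic isometry gives $(L_h, C_h) \to (1,0)$ such that
\[
-C_h + L_h^{-1} \leq h\, d_M(p, u_h) \leq L_h + C_h,
\]
and similarly for $v_h$, so both $h\, d_M(p, u_h)$ and $h\, d_M(p, v_h)$ tend to $1$.

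To apply a single sphere-based distortion bound, I would next perturb $u_h$ and $v_h$ slightly along minimizing geodesics joining each of them to $p$, obtaining points $\tilde u_h, \tilde v_h$ with $d_M(p, \tilde u_h) = d_M(p, \tilde v_h) =: r_h$ (any common value within the range of both distances suffices, for instance their minimum). Since the perturbations are of size at most $|d_M(p,u_h) - d_M(p,v_h)| \to 0$, by the asymptotic isometry we still have $\pi_h^M(\tilde u_h) \to u$ and $\pi_h^M(\tilde v_h) \to v$ in $\T_pM$. Applying Lemma~\ref{lemma:curve} (the curve characterisation of differentiability) at $p$, we conclude that $\pi_h^N(f \tilde u_h) \to df|_p u$ and $\pi_h^N(f \tilde v_h) \to df|_p v$ in $\T_{fp}N$. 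The asymptotic isometry on the target side, together with the injectivity of $df|_p$ (so the limiting points are nonzero), then yields
\[
h\, d_N(fp, f\tilde u_h) \to d_{\T_{fp}N}(0, df|_p u), \qquad h\, d_N(fp, f\tilde v_h) \to d_{\T_{fp}N}(0, df|_p v).
\]

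Finally, I would invoke quasiconformality of $f$ at $p$. Since $d_M(p, \tilde u_h) = d_M(p, \tilde v_h) = r_h$,
\[
\frac{d_N(fp, f\tilde u_h)}{d_N(fp, f\tilde v_h)} \leq \frac{\sup_{d_M(p,q) \leq r_h} d_N(fp, fq)}{\inf_{d_M(p,q) = r_h} d_N(fp, fq)},
\]
and combining $H(p,f) \leq C$ with Lemma~\ref{l:ll} shows the right-hand side is at most $C + \epsilon$ once $r_h$ is small enough. Multiplying numerator and denominator by $h$ and passing to the limit $h \to \infty$ (so $r_h \to 0$ and $\epsilon \to 0$) gives the desired inequality. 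The main subtlety is the bookkeeping: one needs the error terms from the two asymptotic isometries ($L_h \to 1$, $C_h \to 0$) to be compatible with the distortion estimate, and the perturbation to equal-radius spheres must be shown not to move the points off their tangent-space limits — both are routine provided the perturbation scale is kept of order $o(1/h)$.
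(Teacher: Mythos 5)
Your proposal is correct and follows essentially the same route as the paper's proof: pull back $u,v$ via the asymptotically isometric charts, perturb along geodesics to place the representatives on a common sphere of radius $r_h\to 0$, invoke the curve characterisation of the derivative and the pointwise distortion bound $H(p,f)\le C$, and pass to the limit using injectivity of $df|_p$ to keep the denominator nonzero. The only cosmetic differences are that you make the target-side asymptotic isometry explicit (where the paper carries the $(L,K)$ error terms through the displayed inequality) and that your appeal to Lemma \ref{l:ll} is unnecessary, since $H(\cdot,f)$ is already defined with the infimum over the sphere.
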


We can now state a differentiability theorem for quasiregular mappings (we will show $df$ is measurable in \S \ref{sec:invariant}):

\begin{thm}
\label{thm:QRdf}
Let $f: M \rightarrow N$ be a $C$-quasiregular mapping between equiregular sub-Riemannian manifolds. Then the Margulis--Mostow derivative $df\vert_p$ exists at almost every point $p$, and furthermore one has, for almost every $p$ and any $u, v\in \T_pM$ with $d_{\T_pM}(0, u) = d_{\T_pM}(0, v) =1$ that
\(
\frac{d_{\T_pN}(0, df\vert_p u)}{d_{\T_pN}(0, df\vert_p v)} \leq C.
\)
\begin{proof}
A quasiregular mapping is a local homeomorphism away from the branch set. Thus, each point away from the branch set has a neighborhood on which $f$ is a quasiconformal mapping onto its image, cf.\ Proposition \ref{p:qr_loc_qc}. Differentiability follows by Theorem \ref{thm:MM}. The distortion estimate follows from Lemma \ref{lemma:df-qc}.
\end{proof}
\end{thm}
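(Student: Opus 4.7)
The plan is to reduce the result entirely to facts that have been established earlier for quasiconformal mappings, exploiting the fact that the branch set $B_f$ is closed and of measure zero. The essential observation is that, outside of $B_f$, the map $f$ is locally a quasiconformal homeomorphism onto its image, so both the existence of the Margulis--Mostow derivative and the proper-value estimate follow pointwise from corresponding facts for quasiconformal maps.

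First I would invoke Proposition \ref{p:qr_loc_qc} to obtain, for almost every $p \in M$, an open neighborhood $U_p$ on which $f|_{U_p}: U_p \to f(U_p)$ is quasiconformal with the same distortion constant $C$. (Technically one also uses Lemma \ref{l:ll} to translate between the definitions of distortion and to pass from the ambient cc-distance to the restricted cc-distance on $U_p$.) Since $|B_f|=0$ by Definition \ref{d:metric_qr_subRiem:mfd}, the complement $M\setminus B_f$ has full measure.

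Next, I would apply Theorem \ref{thm:MM} of Margulis--Mostow to each such quasiconformal restriction $f|_{U_p}$. That theorem guarantees that the Margulis--Mostow derivative exists almost everywhere in $U_p$ and is an isomorphism almost everywhere there. By the strong Lindel\"of property of $M$ (already used in the proof of Lemma \ref{l:lusin}), $M\setminus B_f$ can be covered by countably many such neighborhoods, so the derivative $df|_p$ exists and is injective for a.e.\ $p\in M$.

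Finally, at each point $p$ where $df|_p$ exists and is injective, Lemma \ref{lemma:df-qc} applied to the quasiconformal map $f|_{U_p}$ yields the desired estimate
\[
\frac{d_{\T_{fp}N}(0, df|_p u)}{d_{\T_{fp}N}(0, df|_p v)} \leq C
\]
for all $u,v\in \T_pM$ on the unit sphere around $0$. There is essentially no serious obstacle here: the only subtle point is making sure that the distortion constant passes unchanged from the metric definition of quasiregularity on $M$ to the metric definition of quasiconformality on the patches $U_p$, which is precisely the content of Lemma \ref{l:ll} and Proposition \ref{p:qr_loc_qc}.
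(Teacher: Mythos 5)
Your proposal is correct and follows essentially the same route as the paper's proof: restrict to the complement of the branch set via Proposition \ref{p:qr_loc_qc}, invoke Theorem \ref{thm:MM} for almost-everywhere existence of the Margulis--Mostow derivative, and apply Lemma \ref{lemma:df-qc} for the distortion bound. The extra care you take with the countable cover and the transfer of the constant $C$ via Lemma \ref{l:ll} is consistent with, and slightly more explicit than, what the paper writes.
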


By Remark \ref{remark:HM} we can identify the first layer of $\T_pM$ with the horizontal space $H_pM$. We then have:

\begin{cor}
\label{cor:QRdf}
Let $f: M \rightarrow N$ be a $C$-quasiregular mapping between equiregular sub-Riemannian manifolds. Then at almost every $p$, the mapping $f$ induces an isomorphism of vector spaces $df\vert_p: H_pM \rightarrow H_pN$ satisfying (with respect to the given inner products on $HM$ and $HN$):
\(
\frac{\norm{df\vert_p u}/\norm{u}}{\norm{df\vert_p v}/\norm{v}} \leq C \text{ for all $u, v\in H_pN$}.
\)
\end{cor}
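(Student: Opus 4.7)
The plan is to deduce the corollary from Theorem \ref{thm:QRdf} together with Remark \ref{remark:HM}, with essentially no new work beyond identifying the first stratum of the Gromov--Hausdorff tangent space with the horizontal space and checking that the Carnot-group homomorphism $df\vert_p$ restricts to a linear map there.

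First, I would invoke Theorem \ref{thm:QRdf} directly: it guarantees that for almost every $p \in M$, the Margulis--Mostow derivative $df\vert_p \colon \T_pM \to \T_{fp}N$ exists as a homomorphism between the Carnot groups $\T_pM$ and $\T_{fp}N$ that is equivariant with respect to their homotheties, and moreover satisfies the distortion bound
\[
\frac{d_{\T_{fp}N}(0, df\vert_p u)}{d_{\T_{fp}N}(0, df\vert_p v)} \leq C \quad \text{whenever } d_{\T_pM}(0,u) = d_{\T_pM}(0,v) = 1.
\]
From Theorem \ref{thm:MM} we additionally know that at almost every such $p$, $df\vert_p$ is an isomorphism of Carnot groups.

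Next, I would pass from $\T_pM$ to the horizontal subspace $H_pM$. Since $df\vert_p$ is a Lie group homomorphism between stratified nilpotent Lie groups that commutes with dilations, the corresponding Lie algebra map preserves the grading, and hence restricts to a linear map of the first strata. Under the identification of Remark \ref{remark:HM}, the first stratum of $\T_pM$ is canonically $H_pM$ equipped with the sub-Riemannian inner product $\langle \cdot,\cdot\rangle_M$, and the distance from the origin to a first-stratum point $u$ coincides with $\norm{u}_M$; likewise for $N$. Consequently $df\vert_p$ restricts to a linear isomorphism $H_pM \to H_{fp}N$ (injectivity and surjectivity follow from the fact that a Carnot-group isomorphism necessarily gives a graded Lie algebra isomorphism).

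Finally, I would rescale the distortion estimate. For nonzero $u, v \in H_pM$, the unit vectors $u/\norm{u}_M$ and $v/\norm{v}_M$ lie at unit distance from $0$ in $\T_pM$, so Theorem \ref{thm:QRdf} applied to them gives
\[
\frac{\norm{df\vert_p(u/\norm{u}_M)}_N}{\norm{df\vert_p(v/\norm{v}_M)}_N} \leq C,
\]
and by linearity of $df\vert_p$ on $H_pM$ this is exactly the desired inequality
\[
\frac{\norm{df\vert_p u}_N / \norm{u}_M}{\norm{df\vert_p v}_N / \norm{v}_M} \leq C.
\]

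There is no real obstacle here; the only subtle point is ensuring that the first-stratum restriction is genuinely linear (and not merely a Carnot-group homomorphism in some exotic sense), which is handled by the fact that the group law on the first stratum of a Carnot group coincides with vector addition, so homogeneous homomorphisms reduce to linear maps on this stratum.
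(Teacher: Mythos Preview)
Your proposal is correct and matches the paper's approach: the paper presents the corollary as an immediate consequence of Theorem~\ref{thm:QRdf} together with the identification in Remark~\ref{remark:HM}, without writing out any further argument. You have simply made explicit the details (grading-preservation, linearity on the first stratum, and the rescaling step) that the paper leaves implicit.
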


\begin{remark}
Lemma \ref{lemma:chain} provides a chain rule that applies to all the derivatives in this section (Theorem \ref{thm:MM}, Theorem \ref{thm:QRdf}, Corollary \ref{cor:QRdf}).
\end{remark}

\section{Invariant conformal structure}\label{sec:invariant}
The goal of this section is to prove Theorem \ref{thm:intro-conformal}, which states that every UQR mapping between equiregular sub-Riemannian manifolds admits an invariant conformal structure. A similar result was first suggested by Sullivan \cite{aSu} and carried out by Tukia \cite{MR861709} for quasiconformal groups, and later extended to certain quasiregular semigroups by Iwaniec and Martin \cite{MR1404085}. While Tukia explicitly considers only quasiconformal groups on $\Sp^n$, his proof is more general.

\subsection{Tukia's theorem}
Let $M$ be a manifold and $B$ a finite-dimensional vector bundle over $M$, with a fixed choice of inner product (on the fibers). For $p\in M$, denote the fiber over $M$ by $F_p$. Recall that a map $f: B \rightarrow B$ is a \emph{bundle map} if $f$ sends each fiber $F_p$ to another fiber (which we denote $F_{fp}$), and the restriction $f_p:= f\vert_{F_p}: F_p \rightarrow F_{fp}$ is linear. Since $f$ takes fibers to fibers, it induces a map $f\vert_M: M \rightarrow M$, which we continue to denote as $f$.

We say that a vector bundle map $f: B \rightarrow B$ has \emph{bounded distortion} if
\[\label{def:bounded-distortion} \sup_{p\in M} \frac{ \sup_{v\in F_p} \norm{fv}/\norm{v}} {\inf_{v\in F_p} \norm{fv}/\norm{v}}=C < \infty.\]
Furthermore, $f$ has \emph{uniformly bounded distortion} if \eqref{def:bounded-distortion} holds for all iterates $f^{n}$ of $f$, with $C$ independent of $n$.

The bundle $B$ \emph{admits an $f$-invariant measurable conformal structure} if there exists an inner product $\langle \cdot, \cdot\rangle$ on the fibers of $B$, varying measurably, such that for some positive measurable function $\lambda$ on $M$ we have
\[
\langle fu, fv \rangle  = \lambda \langle u, v\rangle
\]
for all $u,v \in F_p$ for almost every fiber $F_p$. Typically, one also assumes a boundedness condition for $\langle \cdot,\cdot \rangle$, which can be expressed in terms of the matrix-valued function $s$ associated to $\langle \cdot,\cdot \rangle$.

The following theorem is a rephrasing of Tukia's core result in \cite{MR861709}. We sketch its proof to adapt it to our terminology.
\begin{thm}
\label{thm:tukia}
Let $f: B \rightarrow B$ be a bundle map of uniformly bounded distortion. Then $B$ admits an $f$-invariant measurable conformal structure.
\end{thm}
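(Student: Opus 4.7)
The plan is to execute Tukia's strategy using the nonpositively-curved geometry of the symmetric space of inner products, adapted to the semigroup (non-invertible) setting as in Iwaniec--Martin. At each $p \in M$, with $n = \dim F_p$, identify the space of positive-definite inner products on $F_p$ modulo positive scaling with the symmetric space $S \cong \mathrm{SL}(n,\mathbb{R})/\mathrm{SO}(n)$, equipped with its canonical complete $\mathrm{CAT}(0)$ metric $d_S$. The fixed fiberwise inner product on $B$ determines a reference section $s_0 : M \to S$. The bundle map $f$ acts on inner products by pullback, $(f^{\ast}g)(u,v) = g(f_p u, f_p v)$ for $u,v \in F_p$, and because $f_p$ is a linear isomorphism of fibers, this induces an isometry between the corresponding copies of $S$ at $fp$ and at $p$.

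The uniform distortion hypothesis translates directly into a bounded-orbit statement: applying \eqref{def:bounded-distortion} to each iterate $f^n$ and passing to the symmetric-space model gives, up to a dimensional constant, $d_S(s_0(p), (f^n)^{\ast}s_0(p)) \leq \log C$ for every $n \geq 0$ and every $p \in M$. Writing $t_n(p) := (f^n)^{\ast}s_0(p)$, the forward orbit $\{t_n(p)\}_{n \geq 0}$ thus lies in a ball of fixed radius about $s_0(p)$, uniformly in $p$.

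Now construct the invariant conformal structure via asymptotic circumcenters. In any complete $\mathrm{CAT}(0)$ space, every bounded set has a unique circumcenter, depending continuously on the set in Hausdorff distance and equivariant under isometries. Define
\[
s_k(p) := \mathrm{cc}\{t_n(p) : n \geq k\}, \qquad k \geq 0,
\]
which is measurable in $p$ by a standard measurable-selection argument. The cocycle identity $(f^{n+1})^{\ast} = (f^n)^{\ast} \circ f^{\ast}$ gives $f^{\ast}(t_n(fp)) = t_{n+1}(p)$, and combined with the equivariance of circumcenters under the isometry $f^{\ast}$ one obtains
\[
f^{\ast}(s_k(fp)) = s_{k+1}(p).
\]
The defining sets are nested decreasing in $k$ with uniformly bounded circumradii, and a standard $\mathrm{CAT}(0)$ estimate then forces $\{s_k(p)\}_k$ to be Cauchy in $(S, d_S)$, converging to some $s_\infty(p)$. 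Letting $k \to \infty$ in the equivariance relation and using continuity of $f^{\ast}$ yields $f^{\ast}(s_\infty(fp)) = s_\infty(p)$. Hence the measurable conformal class represented by $s_\infty$ is $f$-invariant, and it is bounded with respect to $s_0$ because every $s_k$ lies in the uniform orbit ball.

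The main obstacle is not conceptual but technical: one must establish a measurable-selection lemma for circumcenters of measurably varying bounded subsets of the $\mathrm{CAT}(0)$ fiber $S$, and a quantitative Cauchy estimate for nested circumcenters. Both are standard in the $\mathrm{CAT}(0)$ literature, but one must take care in our bundle setting to verify that the construction is robust under excision of null sets (arising from the branch set of $f$ or from points where differentiability fails) and under the identification of conformal classes with points in $S$.
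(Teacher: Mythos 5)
Your proof is correct and follows the same core strategy as the paper's: Tukia's device of realizing fiberwise inner products as points of the nonpositively curved symmetric space $S=\mathrm{SL}(d,\R)/\mathrm{SO}(d)$ (the paper's ``non-negatively curved'' is a slip), translating uniform bounded distortion into boundedness of the pullback orbit, and taking circumcenters of bounded sets. Where you genuinely depart from the paper's sketch is in how exact equivariance is obtained, and your version is the more careful one. The paper takes the circumcenter of the full orbit set $\mathcal{S}(p)=\{(f^n)^{\ast}s_0 : n\in\N\}$ and asserts $\mathcal{S}(fp)=f_p\cdot\mathcal{S}(p)$; but since $\{f^n\}_{n\geq 0}\circ f=\{f^n\}_{n\geq 1}$ is only a proper subset of the iteration semigroup, this identity holds only up to deleting the initial term, and circumcenters are not stable under deleting a point of the set. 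Your cocycle relation $f^{\ast}(s_k(fp))=s_{k+1}(p)$ records this index shift honestly, and passing to the limit $k\to\infty$ is precisely what removes it; one could equivalently take the circumcenter of the accumulation set of the orbit, which is genuinely shift-invariant. The Cauchy claim you defer to the literature is indeed routine: for nested sets $A_k\supseteq A_m$ the CAT(0) comparison applied to the two covering balls gives $d_S(c_k,c_m)^2\leq 2\left(r_k^2-r_m^2\right)$, and the circumradii decrease monotonically, so the centers converge. Two small points to make explicit: the bounded-distortion hypothesis must be read as forcing each $f_p$ to be a linear isomorphism (otherwise the pullback does not define a point of $S$), and measurability of $s_\infty$ follows because it is a pointwise limit of the measurable maps $s_k$, whose measurability in turn rests, as in the paper, on continuity of the circumcenter with respect to the Hausdorff topology together with measurability of $p\mapsto\{t_n(p):n\geq k\}$.
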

\begin{proof}[Sketch of proof]
Denote the given inner product on $B$ by $\langle \cdot, \cdot \rangle_0$ and furthermore fix an orthonormal basis $\mathcal B_p$ at each point $p$, varying measurably. Let $d$ be the dimension of the fibers of $B$.

We would like to show that there exists an inner product $\langle \cdot, \cdot \rangle$ on $B$ that is $f$-invariant, up to a multiplicative factor. That is, we would like the property that for all $u,v$ in each fiber $F_p$ and for some positive function $\lambda$ on $M$,
\[\label{eq:invariantinnerproduct}
\langle f_pu, f_pv \rangle  = \lambda(p) \langle u, v\rangle.
\]
Using the basis $\mathcal B_p$, we may find a positive definite matrix $s_p\in GL(d,\R)$ so that
\[
\langle u, v\rangle = \langle u, s_p v\rangle_0
\]
and the invariance relation \eqref{eq:invariantinnerproduct} becomes (taking transposes using $\mathcal B$)
\[
\langle f_pu, s_{fp} f_p  v \rangle_0  &= \lambda(p) \langle u, s_p v\rangle_0,\\
\langle u, f_p^t s_{fp} f_p v \rangle_0  &= \lambda(p) \langle u, s_p v\rangle_0,\\
f_p^t s_{fp} f_p &= \lambda(p) s_p.
\]
We now assume $s_p$ has determinant one, so that \eqref{eq:invariantinnerproduct} further reduces to
\[
\label{eq:invariantmatrices2}
(\det f_p)^{-2/d}  f_p^t s_{fp} f_p = s_p.
\]

Note now that $s_p$ needs to be an element of the space $S \subset SL(d,\R)$ of positive definite symmetric matrices. Tukia points out that $S$ may be identified with $SL(d,\R)/SO(d)$, a non-negatively curved symmetric space. Normalized transpose-conjugation by an element of $GL(d,\R)$, as in \eqref{eq:invariantmatrices2}, is an isometry of $S$.

We are now ready to construct $s_p$. Consider first the orbit of $p$ under f:
\[ \mathcal O(p)= \{ f^np \st n \in \N\}.\]
Note that we have $\mathcal O(fp) = f\mathcal O(p)$.
Now, consider the transformations $(f^n)_p : F_p \rightarrow F_{f^np}$ as elements of $S$:
\[\mathcal S(p) = \{ (\det~(f^n)_p)^{-2/d}  (f^n)_p^t (f^n)_p \st n\in \N\}\subset S.\]
We obtain the invariance equation $\mathcal S(fp) = f_p \cdot \mathcal S(p) = (\det f_p)^{-2/d}f_p^t\mathcal S(p)f_p$.

Under the standard metric on $S$, the uniformly-bounded condition on $f$ gives us that $\mathcal S(p)$ is a bounded set in $S$. Tukia shows that every bounded set in $S$ is contained in a \emph{unique} ball of minimum radius. Take $s_p \in S$ to be the center of the unique ball of minimum radius containing $\mathcal S(p)$. Because transpose-conjugation is an isometry in $S$, the invariance relation on $\mathcal S(p)$ turns into \eqref{eq:invariantmatrices2}.

We thus have that the conformal class of $\langle u, v \rangle := \langle u, s_p v\rangle$ is $f$-invariant. It remains to show that it is also measurable. It is clear that the map $p \mapsto \mathcal S(p)$ is measurable, and Tukia shows that ``averaging'' operation $\mathcal{S}(p)\mapsto s_p$ is continuous with respect to the Hausdorff topology on subsets of $S$.
\end{proof}

\subsection{Bundle maps}
\label{sec:bundles}
We now prove Theorem \ref{thm:intro-conformal} through a series of lemmas focusing on the tangent bundle. Recall that we start with a UQR self-map $f: M \rightarrow M$ of an equiregular sub-Riemannian manifold with horizontal bundle $HM$.

\begin{remark}We note first that Margulis--Mostow provided an intrinsic definition of the Gromov--Hausdorff tangent bundle of $M$ by means of equivalence classes of curves in \cite{MargulisMostow2000}. By viewing $M$ in privileged coordinates from each point, one sees that the Gromov--Hausdorff tangent bundle of $M$ is indeed a bundle whose fibers are all homeomorphic to $\R^d$, where $d$ denotes the dimension of the manifold $M$. One sees that the Gromov tangent bundle is vector-bundle-isomorphic to $TM$ by viewing the fibers in turn as Carnot groups, then as their Lie algebras, and finally simply as vector spaces.
\end{remark}

\begin{lemma}The UQR map $f$ induces a measurable mapping $df$ on the Gromov tangent bundle to $M$.
\label{lemma:UQRdf}
\begin{proof}
Recall that the branch set $B_f$ of $f$ is the set of points where $f$ is not a local homeomorphism. By definition of quasiregularity, $B_f$ is a null set. Away from it, $f$ is a local homeomorphism, so locally quasiconformal. By Theorem \ref{thm:QRdf} we then have that $f$ is almost everywhere differentiable.

We now claim that the derivative $df$ varies \emph{measurably}. Classically (in the Euclidean case), measurability of derivatives is shown to exist by dilating the source and target of $f$. Because our dilations vary from point to point, we have to define some additional structure. Namely, we will observe the dilations at all points simultaneously by building an appropriate bundle over $M$.

Let $d$ be the dimension of $M$, and consider the bundle $C(TM, \R^d)$, whose fibers can be identified with the set of continuous maps from the $\R^d$ to $\R^d$. Consider the section $\sigma$ of $C(TM, \R^d)$ that associates to each point $p$ the map $f_p: \R^d \rightarrow \R^d$ given by privileged coordinates at $p$. Intuitively, $f_p$ is the ``view of $f$ from the point p''. It is clear that this section is measurable, since the choice of privileged coordinates can be made smoothly, at least locally.

The Gromov tangent space at each point $p$ is defined by dilating the privileged coordinates at $p$ by a map $\delta^p_r$. We may view $\delta_r$ as a section of $C(TM, \R^d)$. Composing, we have a measurable section $\delta_r \circ \sigma \circ \delta_{r^{-1}}$. We now take the limit
\[ \sigma_\infty := \limsup_{r\rightarrow \infty} \delta_r \circ \sigma \circ \delta_{r^{-1}}\]
defined as the $\limsup$ along each coordinate. We have thus taken the derivative simultaneously at every point. The section $\sigma_\infty$ is given by a $\limsup$ of measurable functions, so it is measurable. Lastly, we know that the derivative $df$ exists almost everywhere. By construction, $\sigma_\infty$ must agree with the derivative where $df$ exists. In particular, we have that $df$ is measurable.
\end{proof}
\end{lemma}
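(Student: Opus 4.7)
The plan splits the lemma into two halves: first show that $df|_p$ exists at almost every $p \in M$, and then argue that the resulting section of the Gromov tangent bundle is measurable. Existence will follow essentially for free from results already established, so the real content is the measurability claim.

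For existence, note that by the definition of quasiregularity the branch set $B_f$ has measure zero, and by Proposition \ref{p:qr_loc_qc} every point of $M \setminus B_f$ admits a neighborhood on which $f$ is a quasiconformal homeomorphism onto its image. Theorem \ref{thm:QRdf} then supplies almost everywhere existence of the Margulis--Mostow derivative $df|_p$ on $M \setminus B_f$, hence on all of $M$ up to a null set.

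For measurability, the issue is that $df|_p$ is defined through privileged coordinates at $p$ (and at $fp$), which vary with the base point and make it awkward to speak of pointwise limits in a fixed ambient space. The plan is to bundle together the ``view of $f$ from each base point'' into a single measurable object and then take a $\limsup$. Concretely, I would choose a measurable family of privileged coordinate charts around each $p$ -- locally one can even pick them smoothly by selecting a frame adapted to the flag $HM \subset H^2M \subset \cdots$ -- and use them to identify each tangent fiber with $\R^d$, where $d = \dim M$. One then views $f$ through these charts as a measurable section $\sigma$ of the bundle $C(TM, \R^d)$ of continuous self-maps of $\R^d$, and the Carnot dilations $\{\delta_r\}$ as sections of the same type. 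Setting
\[
\sigma_\infty := \limsup_{r \to \infty}\, \delta_r \circ \sigma \circ \delta_{r^{-1}},
\]
interpreted coordinate-wise against a countable dense set of inputs in $\R^d$, one obtains a section that is measurable because countable suprema and infima of measurable functions are measurable. On the full-measure set where the Margulis--Mostow limit actually exists, $\sigma_\infty$ coincides with $df|_p$, so $df$ is measurable.

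The main technical obstacle is the measurability infrastructure itself: one must verify that privileged coordinates can be chosen to depend measurably on the base point in a manner compatible with the Carnot dilation structure, and equip $C(TM, \R^d)$ with a Borel structure relative to which composition with dilations and pointwise $\limsup$ are measurable operations. Once that setup is in place, the argument above is essentially formal, and the curve interpretation of the tangent space from \S \ref{sec:differential} (see Remark \ref{rmk:curves}) guarantees that the abstract limit agrees with the Margulis--Mostow derivative wherever the latter is defined.
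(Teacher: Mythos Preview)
Your proposal is correct and follows essentially the same approach as the paper: first obtain almost-everywhere existence of $df$ from local quasiconformality away from $B_f$ via Theorem \ref{thm:QRdf}, then establish measurability by viewing $f$ through a measurable family of privileged coordinates as a section $\sigma$ of $C(TM,\R^d)$, forming $\sigma_\infty = \limsup_{r\to\infty} \delta_r \circ \sigma \circ \delta_{r^{-1}}$ coordinate-wise, and noting that this $\limsup$ of measurable sections is measurable and agrees with $df$ wherever the latter exists. Your additional remarks about testing against a countable dense set of inputs and about the measurability infrastructure are reasonable elaborations of steps the paper leaves implicit.
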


\begin{lemma}The UQR map $f$ induces a measurable bundle map $df: HM \rightarrow HM$. Furthermore, there exists a full-measure set $U\subset M$ such that the restriction of $df$ to the bundle over $U$ is an  bundle map of uniformly bounded distortion.
\label{lemma:uqrBundle}
\begin{proof}
Lemma \ref{lemma:UQRdf} states that the Margulis--Mostow derivative $df$ is measurable. Its restriction to $HM$ remains measurable.

To obtain a bundle map of uniformly bounded distortion, we need to restrict to a smaller full-measure set. Namely, we remove points where $d(f^n)$ is not defined for any $n$. We further remove points where, for any $n$, $d(f^n)$ does not satisfy the distortion bound in Corollary \ref{cor:QRdf}. Note in particular that we are removing the branch set and its orbit. The chain rule Lemma \ref{lemma:chain} further states that the bounds for $d(f^n)\vert_{HM}$ remain true for $(df)^n\vert_{HM}$.  Lastly, we remove the pre- and forward images of the set just removed. Lemma \ref{l:lusin} ensures that after this process, we retain a set $U$ of full measure.
\end{proof}
\end{lemma}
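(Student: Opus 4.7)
The plan is to derive both assertions directly from the tools already assembled in \S\ref{sec:differential}, without doing any further differentiation theory. For the first assertion, Lemma \ref{lemma:UQRdf} already gives a measurable section of the bundle of maps on the Gromov tangent bundle, realized by $df$ wherever the Margulis--Mostow derivative exists. Since $df\vert_p$ is required to be a Lie group homomorphism of Carnot groups equivariant with respect to homotheties (Definition \ref{defi:MMdf}), it restricts to a linear map on the first layer, which by Remark \ref{remark:HM} is identified with $H_pM$. Thus, restriction of the section from Lemma \ref{lemma:UQRdf} to $HM$ is measurable, and the induced map on the horizontal subbundle is a measurable bundle map.

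For the second assertion, I would fix a uniform distortion constant $K$ so that each iterate $f^n$ is $K$-quasiregular, and apply Theorem \ref{thm:QRdf} together with Corollary \ref{cor:QRdf} to every iterate. This produces, for each $n \in \N$, a null set $E_n \subset M$ off of which $d(f^n)$ exists and satisfies $\lambda_+/\lambda_- \leq K$ on $HM$. The point of removing only $E_n$ would not be enough, however; for the bundle map $df\vert_{HU}$ to have bounded distortion in \emph{every} iterate, the chain rule has to be applicable all along every forward orbit. For that purpose I would set
\[
N := \bigcup_{k,\,n\geq 0} (f^k)^{-1}(E_n)
\]
and take $U := M \setminus N$. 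Lemma \ref{l:lusin} ensures that each preimage $(f^k)^{-1}(E_n)$ has measure zero, so $N$ is a countable union of null sets and hence is itself null; consequently $U$ has full measure.

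The remaining step is to verify that on $U$ the map $df$ behaves as required. By construction $U$ is forward-invariant: if $p \in U$, then for every $k, n \geq 0$ one has $f^{k+1}p \notin E_n$, so $fp \in U$. For $p \in U$ the Margulis--Mostow derivative of every iterate $f^n$ exists at $p$ with distortion at most $K$ on $H_pM$, and the chain rule (Lemma \ref{lemma:chain}) identifies $d(f^n)\vert_p$ with the $n$-fold bundle iterate $(df)^n\vert_p$. This gives the uniform bound on the distortion of $df\vert_{HU}$ as a bundle map, with constant $K$ independent of $n$.

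I do not expect a genuine obstacle here; the lemma is essentially a bookkeeping exercise that collects Theorem \ref{thm:QRdf}, Corollary \ref{cor:QRdf}, the chain rule, and Lusin's condition from Lemma \ref{l:lusin}. The only mildly delicate point is the construction of $N$ as the union of preimages of the exceptional sets of \emph{every} iterate (rather than merely the iterated preimages of a single $E_1$), since the distortion bound for $(df)^n$ comes from the distortion bound for $d(f^n)$ rather than from naive iteration of the bound for $df$.
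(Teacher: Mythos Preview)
Your proposal is correct and follows essentially the same route as the paper: invoke Lemma~\ref{lemma:UQRdf} for measurability, restrict to $HM$, then remove a countable union of null sets (using Corollary~\ref{cor:QRdf} applied to each iterate together with Lemma~\ref{l:lusin}) to obtain a forward-invariant full-measure set on which the chain rule identifies $(df)^n$ with $d(f^n)$. Your write-up is in fact slightly more explicit than the paper's about the construction of $N$ and the forward-invariance check, but the ingredients and logic are the same.
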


\begin{proof}[Proof of Theorem \ref{thm:intro-conformal}]
The theorem is now a direct consequence of Lemma \ref{lemma:uqrBundle} and Theorem \ref{thm:tukia}. To summarize, we used the Margulis--Mostow derivative to obtain a measurable differential $df$, defined on the Gromov--Hausdorff tangent spaces. We showed that it restricts to the bundle $HM$, and almost everywhere satisfies appropriate distortion bounds. After restricting to a full-measure set $U$, we applied Theorem \ref{thm:tukia} to obtain an invariant conformal structure over $U$.
\end{proof}

\section*{Acknowledgements}
The authors would like to thank Jeremy Tyson for useful conversations. Much of the work was conducted while the second author visited Aalto University, and he would like to thank the department for its hospitality.

\bibliographystyle{alpha}
\bibliography{qc_1}

\end{document}